\numberwithin{equation}{section}
\newtheorem{theorem}{Theorem}[section]
\newtheorem{corollary}[theorem]{Corollary}
\newtheorem{proposition}[theorem]{Proposition}
\newtheorem{definition}{Definition}[section]
\newtheorem{remark}{Remark}[section]
\newtheorem{example}{Example}[section]
\theoremstyle{definition}
\def\SS{\mathcal{S}}
\newcommand{\C}{\mathcal{C}}
\newcommand{\R}{\mathcal{R}}
\newcommand{\G}{\mathcal{G}}
\newcommand{\F}{\mathcal{F}}
\newcommand{\PP}{\mathcal{P}}
\newcommand{\RR}{\mathbb{R}}
\newcommand{\ZZ}{\mathbb{Z}}
\newcommand{\been}{\begin{enumerate}}
\newcommand{\enen}{\end{enumerate}}
\newcommand{\beit}{\begin{itemize}}
\newcommand{\enit}{\end{itemize}}
\newcommand{\specialcell}[2][c]{\begin{tabular}[#1]{@{}c@{}}#2\end{tabular}}
\def\ds{\displaystyle}
\def\one{\mathbbm{1}}
\DeclareMathOperator{\SSann}{span}
\DeclareMathOperator{\supp}{supp}
\providecommand{\abs}[1]{\lvert#1\rvert}
\providecommand{\norm}[1]{\lVert#1\rVert}
\title{Graphically balanced equilibria and stationary measures of reaction networks}
\author{Daniele Cappelletti\footnotemark[1] \and Badal Joshi\footnotemark[2]}
\begin{document}

 \footnotetext[1]{Department of Mathematics, University of Wisconsin-Madison.}
  \footnotetext[2]{Department of Mathematics, California State University San Marcos}

 \tikzset{every node/.style={auto}}
 \tikzset{every state/.style={rectangle, minimum size=0pt, draw=none, font=\normalsize}}
  \tikzset{bend angle=7}

 \maketitle

\begin{abstract}
The graph-related symmetries of a reaction network give rise to certain special equilibria (such as complex balanced equilibria) in deterministic models of dynamics of the reaction network. Correspondingly, in the stochastic setting, when modeled as a continuous-time Markov chain, these symmetries give rise to certain special stationary measures. Previous work by Anderson, Craciun and Kurtz identified stationary distributions of a complex balanced network; later Cappelletti and Wiuf developed the notion of complex balancing for stochastic systems. We define and establish the relations between reaction balanced measure, complex balanced measure, reaction vector balanced measure, and cycle balanced measure and prove that with mild additional hypotheses, the former two are stationary distributions. Furthermore, in spirit of earlier work by Joshi, we give sufficient conditions under which detailed balance of the stationary distribution of Markov chain models implies the existence of positive detailed balance equilibria for the related deterministic reaction network model. Finally, we provide a complete map of the implications between balancing properties of deterministic and corresponding stochastic reaction systems, such as complex balance, reaction balance, reaction vector balance and cycle balance. 

  
\end{abstract}

\section{Introduction}\label{sec:introduction}

%

Reaction networks are widely used mathematical models in biochemistry \cite{anderson:book, erdi:mathematical_models}. In these models, a finite collection of chemical species interact according to a finite set of possible chemical transformations.
An example of a reaction network is given by
\begin{equation}\label{eq:reactions}
A+B \stackrel[\kappa_2]{\kappa_1}{\rightleftarrows} 2C \qquad A \stackrel[\kappa_4]{\kappa_3}{\rightleftarrows} B
  \end{equation}
In this case, $A$, $B$ and $C$ denote some distinct chemical species. Here, a molecule of $A$ and a molecule of $B$ can be turned into two molecules of $C$, and two molecules of $C$ can be turned back into one molecule of $A$ and one molecule of $B$. Additionally, a molecule of $A$ can be turned reversibly into a molecule of $B$. The numbers $\kappa_i$ denote the mass action reaction rate constants -- a larger value of a rate constant is associated with a higher propensity of the corresponding reaction. A more formal introduction is provided in Section \ref{sec:background}.

In the setting of biochemistry, different modeling regimes are considered. Specifically, if the counts of the molecules in the system of interest are low, then the evolution of these counts is modeled through a continuous time Markov chain. If more molecules are present, then the dynamical variables describe the concentrations of the different chemical species rather than their counts, and their time evolution is described by a system of stochastic differential equations \cite{kurtz1976}. Finally, if the number of molecules is so large that the random fluctuations in their counts can be safely ignored, then the time evolution of the species concentrations is modeled via a system of ordinary differential equations (ODEs). 

There is a rich history of relating the graphical properties of reaction networks with their dynamical features, especially for the ODE model \cite{feinberg1972, horn1972necessary, sturmfels, boros:deficiency, dickenstein2011far}. A theory that connects a graph to dynamics is naturally appealing, since graphical properties are simple to check, while the consequences for the dynamics are far-reaching.  
In this context, an important class of reaction network models is that of \emph{complex balanced systems}, whose positive equilibria satisfy a graphical balance condition (see Section \ref{sec:equilibria_deterministic}). It is known that if a positive complex balanced equilibrium exists, then all equilibria of the system are complex balanced and locally asymptotically stable. A special case of a complex balanced equilibrium is that of a \emph{detailed balanced} equilibrium, which is of particular importance in thermodynamics. 
A generalization of complex balanced equilibria has recently been studied in \cite{CFW:factors}.

As an example, we know from simply looking at the graphical properties of the chemical reaction network \eqref{eq:reactions}, that the deterministic model has positive detailed balanced equilibria for any choice of positive rate constants $\kappa_1$, $\kappa_2$, $\kappa_3$ and $\kappa_4$. We also know that the dynamics of the solution of the corresponding ODE system can neither be chaotic nor have oscillations, and will eventually converge to a locally stable positive equilibrium \cite{horn1972necessary, feinberg1972}.

Another topic of interest concerns the connection between dynamical properties of a continuous time Markov chain model and its corresponding ODE model. This study dates back to \cite{kurtz1972relationship}, where the ODE model is proven to be the weak limit of the continuous time Markov chain model, when the counts of the molecules is increased and properly rescaled. The interest in this topic is motivated by the desire to infer properties of the continuous time Markov chain model from the theory that has been developed for the deterministic model over the years. More recently, advances have been made in connecting complex balanced equilibria of the ODE models with the stationary distribution of the corresponding Markov chain model. Specifically, in \cite{anderson:product-form} it is shown that if an ODE model equipped with \emph{mass action} kinetics has a positive complex balanced equilibrium, then the Markov chain model has a product form Poisson stationary distribution (Theorem \ref{thm:anderson}). In \cite{ACK:explosion} it is further proven that such models are non-explosive. In \cite{joshi:detailed} it is proven that if an ODE model with mass action kinetics has a positive detailed balanced equilibrium, then the stationary distributions of the corresponding Markov chain model are detailed balanced, in the classical probabilistic sense \cite{norris:markov}. Finally, in \cite{cappelletti:complex_balanced} it is shown that if an ODE model with mass action kinetics is complex balanced, then the stochastic counterpart has a so called \emph{complex balanced stationary distribution}, and the converse holds as well. More properties of complex balanced stationary distributions are then studied in \cite{cappelletti:complex_balanced}, and a stochastic formulation of the Deficiency Zero Theorem is given.

In this paper, in order to fully compare the graphical properties of stochastic and deterministic models at equilibrium, we introduce {\em reaction vector balanced states} (see Definition \ref{def:balanced_states}), as the natural deterministic counterpart of detailed balanced distributions for stochastic models (called {\em reaction vector balanced distributions} here to make the connection more explicit). We study the properties of this new type of equilibrium, and how it relates to more classical notions of graphical balance.
Notably, unlike for complex balanced or detailed balanced states, the existence of a reaction vector balanced state does not imply existence or uniqueness of other equilibria within different positive compatibility classes (see Remark \ref{rem:rvb_not_uniqueness}).

We also extend the work of \cite{cappelletti:complex_balanced}, by studying graphically balanced measures, and not only balanced distributions. In particular, we define and establish the relations between reaction balanced measure, complex balanced measure and reaction vector balanced measure (see Definition \ref{def:balanced_measures}) and prove that with mild additional hypotheses, the former two are stationary distributions (see Theorem \ref{thm:exist_stat_dist}) while the same does not hold in general for the latter (see Remark \ref{rem:rvb_measure_not_distr}). 
  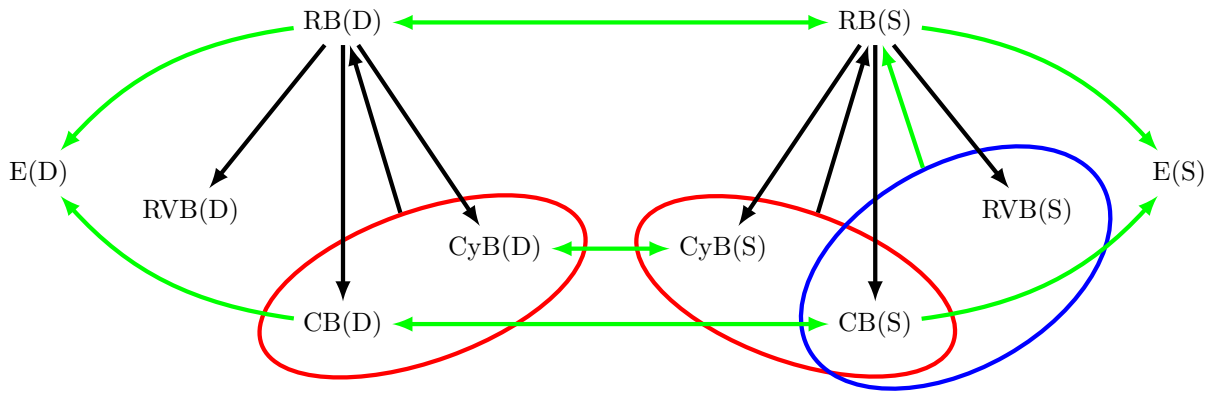
\begin{figure}[h!]
  \begin{center}
  \tikzset{>=latex}
  \tikzset{bend angle=20}
  \begin{tikzpicture}[baseline={(current bounding box.center)},scale=1]
   \node[state] (CBD)  at (-3.5,0)  {CB(D)};
   \node[state] (RBD)  at (-3.5,4)  {RB(D)};
   \node[state] (CyBD)  at (-1.5,1)  {CyB(D)};
   \node[state] (RVBD)  at (-5.5,1.5)  {RVB(D)};
    \node[state] (ED)  at (-7.5,2)  {E(D)};
   
   \node[state] (CBS)  at (3.5,0)  {CB(S)};
   \node[state] (RBS)  at (3.5,4)  {RB(S)};
   \node[state] (CyBS)  at (1.5,1)  {CyB(S)};
   \node[state] (RVBS)  at (5.5,1.5)  {RVB(S)};
   \node[state] (ES)  at (7.5,2)  {E(S)};
   
   \node [draw=red, ellipse, fit= (CBD) (CyBD), inner sep=-0.1cm, rotate=20, ultra thick] (CBDCyBD) {};
   
    \node [draw=red, ellipse, fit= (CBS) (CyBS), inner sep=-0.1cm, rotate=-20, ultra thick] (CBSCyBS) {};
    
    \node [draw=blue, ellipse, fit= (CBS) (RVBS), inner sep=-0.1cm, rotate=30, ultra thick] (CBSRVBS) {};
   
   \path
   (RBD) edge [->, ultra thick] (RVBD)
   (RBD) edge [->, ultra thick] (CBD)
   (RBD) edge [->, ultra thick] (CyBD)
   
    (RBS) edge [->, ultra thick] (RVBS)
   (RBS) edge [->, ultra thick] (CBS)
   (RBS) edge [->, ultra thick] (CyBS)
   
    (RBD) edge [<->, green, ultra thick] (RBS)
    (CyBD) edge [<->, green, ultra thick] (CyBS)
    (CBD) edge [<->, green, ultra thick] (CBS)
    
    (CBDCyBD) edge [->, ultra thick] (RBD)
    (CBSCyBS) edge [->, ultra thick] (RBS)
    (CBSRVBS) edge [->, green, ultra thick] (RBS)
    
    (RBD) edge [->, green, bend right, ultra thick] (ED)
    (CBD) edge [->, green, bend left, ultra thick] (ED)
    
    (RBS) edge [->, green, bend left, ultra thick] (ES)
    (CBS) edge [->, green, bend right, ultra thick] (ES);
\end{tikzpicture}
\caption{Notation key -- RB: Reaction Balance, CB: Complex Balance, RVB: Reaction Vector Balance, CyB: Cycle Balance. (D) denotes Deterministic reaction system and (S) denotes Stochastic reaction system. E(D): Deterministic reaction system has a positive equilibrium within each positive compatibility class. E(S): Stochastic reaction system has a stationary distribution within each irreducible component. An arrow represents implication. A solid black arrow represents implication for arbitrary kinetics and a green arrow for mass action kinetics only. An arrow from an ellipse containing multiple nodes indicates that both conditions must hold for the implication to hold. Absence of an arrow means that the corresponding implication does not exist in general either for arbitrary kinetics or for mass action kinetics.}
\label{summary_of_implications}
  \end{center}
 \end{figure}
 Further, in the spirit of \cite{anderson:product-form, joshi:detailed, cappelletti:complex_balanced}, we relate properties of the stationary measures of the Markov chain model with similar features of the equilibria of the corresponding ODE model. We prove that a complete symmetry between the results concerning graphical balancing equilibria and the results dealing with graphical balancing measures is lacking: specifically, in the stochastic sense two forms of graphical balance, namely reaction vector balance and complex balance, imply reaction balance (Theorem \ref{thm:rvb+cb=rb}). The same does not hold for the ODE model, as it is shown in Remark \ref{rem:no_implication_det}.


In the spirit of \cite{joshi:detailed}, we give sufficient conditions capable of ``lifting'' detailed balance of the Markov chain model to detailed balance of the ODE model. Specifically, in Corollary \ref{cor:cb_plus_detrb} we prove that if the Markov chain model has a detailed balanced distribution (here called `reaction vector balanced') which is complex balanced, then the associated ODE model admits a positive detailed balanced equilibrium (hence all equilibria are detailed balanced and locally asymptotically stable due to classical results of \cite{horn1972necessary}).

Figure \ref{summary_of_implications} presents a complete map of all connections between different kinds of balanced systems (see Definitions \ref{def:balanced_system} and \ref{def:stoch_balanced_system}), both in the deterministic and stochastic settings. The result that a mass action system is deterministically complex balanced if and only if it is stochastically complex balanced is owed to \cite{cappelletti:complex_balanced}. In \cite{horn1972necessary, feinberg1972}, it is shown that a deterministic complex balanced mass action system has a positive equilibrium within each positive compatibility class while in \cite{anderson:product-form} it is shown that the corresponding stochastic mass action system has a unique stationary distribution within each irreducible component.  The corresponding result that deterministic reaction balance in a mass action system implies a positive equilibrium in each positive compatibility class follows easily from the above. In \cite{dickenstein2011far}, it is shown that under general kinetics, a deterministic complex balanced system which is cycle balanced is reaction balanced. 
The other implications shown in the figure are either new results or generalizations of known results. Specifically, we introduce reaction balance, reaction vector balance, and cycle balance in the stochastic setting along with reaction vector balance in the deterministic setting. Furthermore, for some of the results (indicated by a solid black arrow in the figure), we consider arbitrary kinetics with the only restriction that the rate function be nonnegative-real valued. The positive orthant is not required to be invariant, and therefore in such cases we prove results for balanced states outside of $\RR^n_{> 0}$ or $\ZZ^n_{> 0}$. Finally, we emphasize that an absence of an arrow in Figure \ref{summary_of_implications} means that in general the implication does not exist. For instance, absence of a green arrow between RVB(D) and RVB(S) means that, even under the assumption of mass action kinetics, RVB(D) does not imply RVB(S) and vice versa (see Remark \ref{ex:srvb_drvb}). 
Figure \ref{summary_of_implications} is given as a quick reference, more detailed results can be found throughout the paper. In addition, a detailed summary of scheme of implications for the stochastic models is presented at the end of Section \ref{sec:expand_bridge}. 


The paper is organized as follows: in Section \ref{sec:background} the necessary definitions of reaction network theory are given, In Section \ref{sec:equilibria_deterministic}, equilibria of the ODE models with graphical balancing properties are discussed, and known results are presented. In Section \ref{sec:equilibria_stochastic}, invariant measures for the continuous time Markov chain are dealt with: graphical balance in this framework is discussed, and new properties are stated together with some known results. Sections \ref{sec:equilibria_deterministic} and \ref{sec:equilibria_stochastic} have a similar structure. In particular, in both sections we first deal with graphical balance under arbitrary kinetics, then specialize to results for mass action kinetics. In Section \ref{sec:bridge}, connections between equilibria of the ODE model and stationary distributions of the corresponding Markov chain model are proven, under the assumption of mass action kinetics. 

\section{Background}\label{sec:background}

 \subsection{Notation} 
 
Let $\RR$, $\RR_{\ge 0}$ and $\RR_{> 0}$ represent the reals, the nonnegative reals and the positive reals, respectively. Let $\ZZ$, $\ZZ_{\ge 0}$ and $\ZZ_{> 0}$ represent the integers, the nonnegative integers and the positive integers, respectively. 
For $v\in\RR^n$, $\norm{v}_1=\abs{v_1 + \ldots + v_n}$.
 For $v,w\in\RR^n$, $v \le w$ ($v < w$) means that $v_i \le w_i$ ($v_i < w_i$) for all $i \in \{1,\ldots, n\}$. 
For $v,w\in\RR^n$, we define  
 \begin{equation*}
 \one_{\{v \le w\}} = 
 \begin{cases}
 1 \quad , \quad v \le w \\
 0 \quad , \quad \mbox{ otherwise}.
 \end{cases}
 \end{equation*}
If $v>0$ then $v$ is said to be positive. 
If $x\in\RR^n_{\ge 0}$ and $v\in \ZZ^n_{\ge 0}$, we define
 $$x^v=\prod_{i=1}^n x_i^{v_i},\quad \text{and}\quad v!=\prod_{i=1}^n v_i!,$$
 with the conventions that $0!=1$ and $0^0=1$. 

 \subsection{Reaction networks}  
 
 A reaction network is a triple $\G=(\SS,\C,\R)$, where $\SS=\{X_1,X_2,\dots,X_n\}$ is a set of $n$ species, $\C$ is a set of $m$ complexes, and $\R\subseteq \C\times\C$ is a set of $r$ reactions, such that  $(y,y)\notin\R$ for any  $y\in\C$. The complexes are  linear combinations of species over $\ZZ_{\ge 0}$, identified with vectors in $\ZZ_{\ge 0}^n$ (which can be therefore embedded in $\RR^n$).  A reaction $(y,y')\in\R$ is denoted by $y\to y'$, and the vector $y'-y$ is the corresponding \emph{reaction vector}.  We refer to $y$ as the {\em reactant complex} of the reaction $y \to y'$ and to $y'$ as the {\em product complex}. We require that every  species has a nonzero coordinate in at least one complex and that every complex is either a reactant complex or a product complex of at least one reaction. With this convention, there are no ``redundant'' species or complexes and $\G$ is uniquely determined by $\R$. In  \eqref{eq:reactions}, there are $n=3$ species ($A,B,C$), $m=2$ complexes ($A+B, 2B$), and $r=4$ reactions. 
 
 For convenience, we allow for a set of reactions to be empty. In such a case, $n=m=r=0$ and the associated reaction network is called the {\em empty network}.
 
Suppose that $\G=(\SS,\C,\R)$ is a reaction network and $\R' \subseteq \R$. Then $\F = (\SS |_{\R'}, \C |_{\R'}, \R')$ is said to be a {\em subnetwork} of $\G$. Since a reaction network is determined by its set of reactions, we will equivalently say that $\R'$ determines a subnetwork of $\R$ (or of $\G$) without any loss of clarity. 

A reaction network $\G=(\SS,\C,\R)$ can be determined by the directed graph with node set $\C$ and edge set $\R$ in a natural manner. This graph is known as \emph{reaction graph} and we will frequently refer to it in the rest of the paper.

A reaction network $\G$ is \emph{weakly reversible} if every reaction $y\to y'\in\R$ is contained in a closed directed path of the reaction graph. Recall that a closed directed path is defined as a path $y_1\to y_2\to\dots\to y_k$ in the reaction graph such that $k\geq2$ and $y_1=y_k$.
Moreover, $\G$ is \emph{reversible} if for any reaction $y\to y'\in\R$, $y'\to y$ is in $\R$. It is clear that each reversible reaction network is also weakly reversible, since every reaction $y\to y'$ is contained in the cycle $y\to y'\to y$. As an example, the network in \eqref{eq:reactions} is reversible, and therefore weakly reversible. By definition, the empty network is reversible.
 
  The \emph{stoichiometric subspace} of $\G$ is the linear subspace of $\RR^n$ generated by the reaction vectors, namely
 $$S=\SSann(y'-y\vert y\to y'\in\R).$$
 For $v\in\RR^n$, the sets $(v+S)\cap\RR^n_{\ge 0}$ are called the \emph{stoichiometric compatibility classes} of $\G$. 
 

 \subsection{Reaction systems}
 We will consider dynamics of a reaction network with $n$ species both on $\RR^n$ and $\ZZ^n$. $\RR^n$ is the usual underlying state space for deterministic models involving ordinary differential equations, while the state space is $\ZZ^n$ for stochastic models involving continuous-time Markov chains. We do not consider stochastic differential equations (ordinary differential equations with a noise term) in this paper, but in passing we mention that this is an instance where a stochastic model has the underlying state space $\RR^n$ (see for example \cite{kurtz1976}, and \cite{ruth, bibbona:strong, bibbona:weak} for more recent developments on the subject).
  
\begin{definition}
Let $\G$ be a reaction network. Suppose that to each reaction $y \to y' \in \R$ there is associated  a nonnegative-valued {\em rate function} $\lambda_{y \to y'}$, whose domain is the state space ($\lambda_{y \to y'}: \RR^n \to \RR_{\ge 0}$ or $\lambda_{y \to y'}: \ZZ^n \to \RR_{\ge 0}$). By kinetics on $\G$, we mean the correspondence between reactions and rate functions. 
\begin{equation*}
\Lambda: (y \to y') \mapsto \lambda_{y \to y'}
\end{equation*}
The pair $(\G,\Lambda)$ will be referred to as a {\em reaction system}. 
\end{definition}

The range of $\lambda_{y \to y'}$ besides being nonnegative real-valued usually has some additional restrictions, which prevent the trajectory from exiting the nonnegative orthant. However, we will not make these restrictions because they are not necessary for the results in this article. In particular, the results stated in this paper for arbitrary kinetics do not require the dynamical system (either stochastic or deterministic) to be restricted to $\RR^n_{\ge 0}$.

It is natural to think of the kinetic system $(\G,\Lambda)$ as a labelled graph, where $\G$ is the underlying graph and the rate function $\lambda_{y \to y'}$ labels each edge $y \to y' \in \R$.

We further give the following definition:
\begin{definition}\label{def:activity}
 Let $(\G,\Lambda)$ be a reaction system and let  $\Gamma$ be a subset of the state space, i.e. $\Gamma \subseteq \ZZ^n$  or $\Gamma \subseteq \RR^n$. Then,
 \begin{itemize}
  \item a reaction $y \to y' \in \R$ is {\em active in $\Gamma$} if $\lambda_{y \to y'}(x) > 0$ for some  $x\in\Gamma$ (if a reaction $y \to y' \in \R$ is active in a singleton set $\Gamma = \{x\}$, we say that $y \to y' \in \R$ is active at $x$);
  \item the {\em active subnetwork} of $(\G,\Lambda)$ in $\Gamma$ (denoted by $\G_\Gamma$) is the network determined by the set of reactions in $\R$ that are active in $\Gamma$ (if $\Gamma = \{x\}$ is a singleton set, we will denote $\G_{\{x\}}$ by $\G_x$ for simplicity, and will refer to it as the active subnetwork at $x$);
  \item the set $\Gamma$ is {\em active} if $\G_\Gamma=\G$, that is if for every reaction $y\to y'\in\R$ there exists $x\in\Gamma$ such that $\lambda_{y\to y'}(x)>0$ (if $\Gamma = \{x\}$ is a singleton set, we will simply say that $x$ is active).
 \end{itemize}
 \end{definition}
 

 
 \subsubsection{Deterministic/continuous dynamics}
 
Let $(\G,\Lambda)$ be a deterministic reaction system. In the deterministic case, the evolution of the species concentrations $z(t) \in \RR^n$ is determined by the following system of ODEs
 \begin{equation*}
  \frac{d z}{dt} =\sum_{y\to y'\in\R}(y'-y)\lambda_{y\to y'}(z) 
 \end{equation*}
The evolution of the deterministic reaction system is confined to translations of the stoichiometric subspace, that is for any $t\geq 0$
 \begin{equation*}
 z(t)\in (z(0)+S). 
 \end{equation*}
If, as usual, the dynamics are restricted to the positive orthant, in accordance with the chemistry interpretation of the model, then the solution $z(t)$ is confined to its stoichiometric compatibility class, 
 \begin{equation*}
 z(t)\in (z(0)+S)\cap\RR^n_{\ge 0} =: \PP_{z(0)}, 
 \end{equation*}
 where $\PP_z$ denotes the compatibility class containing $z \in \RR^n$. We say that a compatibility class $\PP_{z}$ is {\em positive} if $\PP_z \cap\RR^n_{> 0}$ is nonempty. 
$c \in \RR^n$ is said to be an {\em equilibrium} of the deterministic reaction system $(\G,\Lambda)$ if 
\begin{equation*}
\sum_{y\to y'\in\R}(y'-y)\lambda_{y\to y'}(c) = 0.
\end{equation*}
 If $c \in \RR^n_{>0}$ ($c \in \RR^n_{\ge 0}$), we say that $c$ is positive (nonnegative) and denote it by $c > 0$ ($c \ge 0$). 

 \subsubsection{Stochastic/discrete dynamics}

Let $(\G,\Lambda)$ be a stochastic reaction system. In the discrete setting, the underlying state space is $\ZZ^n$. A vector $x = (x_1 ,\ldots, x_n) \in \ZZ^n$ represents the population $x_i$ of each species $i = 1, \ldots, n$. When a reaction $y \to y'$ that is active at $x$ occurs,  the population changes from $x$ to $x + y' - y$. 



We say that $x' \in \ZZ^n$ is {\em accessible} from $x \in \ZZ^n$, if either $x' = x$ or there is a sequence of states $(x=u_0, u_1, \ldots, u_n = x')$ such that for each consecutive pair of states $(u_i, u_{i+1})$ ($0 \le i \le n-1$), there is an active reaction $y \to y' \in \R$ at $u_i$ with $y' - y = u_{i+1} - u_i$.  A non-empty set $\Gamma\subseteq\ZZ^{n}$ is an \emph{irreducible component} of $(\G,\Lambda)$ if for all $x\in\Gamma$ and all $u\in\ZZ^{n}$, $u$ is accessible from $x$ if and only if $u\in\Gamma$.  According to Definition \ref{def:activity}, an irreducible component $\Gamma$ is \emph{active} if for all reactions $y\to y'\in\R$ there exists a state $x\in\Gamma$ such that $y\to y'$ is active at $x$. Note that active irreducible components are called `positive' in \cite{cappelletti:complex_balanced}, where they were first introduced. 

%
\begin{definition} 
Let $(\G,\Lambda)$ be a stochastic reaction system and let $\pi$ be a measure on $\ZZ^n$. 
$\pi$ is said to be a {\em stationary measure} of $(\G, \Lambda)$ if the following holds for all $x \in \ZZ^n$: 
\begin{equation*}
 \pi(x) \sum_{y\to y' \in \R}  \lambda_{y \to y'}(x) = \sum_{y\to y' \in \R} \pi(x+y - y') \lambda_{y \to y'}(x+y - y'),
\end{equation*}
\end{definition}

\begin{definition}
\been
\item $\pi$ is said to be a {\em $\sigma$-finite measure} if $\pi(x) < \infty$ for all $x \in \ZZ^n$. $\pi$ is said to be a {\em finite measure} if $\pi\left(\ZZ^n\right) < \infty$. $\pi$ is said to be a {\em distribution} if $\pi\left(\ZZ^n\right) =1$.
\item We define {\em support} of a measure $\pi$ on $\ZZ^n$, denoted by $\supp(\pi)$, to be the smallest set $T$ such that $\pi(\ZZ^n \setminus T)=0$.
\item If $\Omega \subseteq \ZZ^n$ is such that $\supp(\pi) \cap \Omega \ne \emptyset$, we say that $\pi$ is {\em non-null} on $\Omega$.
\item If $\Omega \subseteq \ZZ^n$ is such that $\supp(\pi)$ is nonempty and is contained in $\Omega$, then we say that $\pi$ {\em exists within $\Omega$}.  
\enen
\end{definition}

We note here that if $\pi$ is a non-null stationary measure of $(\G,\Lambda)$, then $\pi$ must exist within the union of all irreducible components of $(\G,\Lambda)$ (see for example \cite{norris:markov}).

In the usual setting of chemical reaction networks, the dynamics of the Markov chain are restricted to $\ZZ^n_{\geq0}$. In this case, only irreducible components contained in $\ZZ^n_{\geq0}$ are considered and the stationary distributions exist within $\ZZ^n_{\geq0}$. However, these restrictions are not necessary for the results on arbitrary kinetics that we present in this paper.

\subsubsection{Mass action kinetics}
An important choice of kinetics, both deterministic and stochastic, is mass action kinetics. 
The deterministic ODE mass action model has been shown to arise as a certain large population limit of the stochastic mass action model \cite{kurtz1972relationship}. For the purposes of this article, we will assume the two mass action models as given without explicitly realizing one model as a limit of the other. 
We define the two models below. 
\begin{definition} Consider a reaction network $\G = (\SS,\C,\R)$. 
\been
\item By deterministic mass action kinetics, we mean the correspondence $K_D: (y \to y') \mapsto \lambda_{y \to y'}$ where
\begin{equation*}
\lambda_{y\to y'}(z)=\mathbbm{1}_{\{z\geq 0\}}\kappa_{y\to y'} z^{y} \mbox{ for some } \kappa_{y\to y'} > 0\text{ and for all }z\in\RR^n
\end{equation*}
The pair $(\G,K_D)$ is called a \emph{deterministic mass action system}. 
\item By stochastic mass action kinetics, we mean the correspondence $K_S: (y \to y') \mapsto \lambda_{y \to y'}$ where
\begin{equation*}
\lambda_{y\to y'}(x)=\kappa_{y\to y'}\frac{x!}{(x-y)!}\mathbbm{1}_{\{x\geq y\}} \mbox{ for some } \kappa_{y\to y'} > 0\text{ and for all }x\in\ZZ^n.
\end{equation*}
The pair $(\G, K_S)$ is called a \emph{stochastic mass action system}. 
\enen
In both cases, the constants $\kappa_{y\to y'}$ are referred to as \emph{rate constants}.
\end{definition}  
We will often need to refer to a deterministic mass action system $(\G,K_D)$ and its {\em corresponding} stochastic mass action system $(\G,K_S)$ within the same context. The correspondence is via the reaction network $\G$, which is the same, and by making the same choice of rate constants. 
\begin{remark}\label{rem:mass_action_no_indicator}
 An equivalent formulation of the stochastic mass action rates restricted to $\ZZ^n_{\geq 0}$ is the following:
 $$\lambda_{y\to y'}(x)=\kappa_{y\to y'}\frac{x!}{(x-y)!}\mathbbm{1}_{\{x\geq y\}}=\kappa_{y\to y'}\prod_{i=1}^nx_i(x_i-1)\cdots(x_i-y_i+1)$$
 for all $x\in\ZZ^n_{\geq 0}$. Note that no indicator function appears on the right-hand side of the equation. The rates of stochastic mass action kinetics are multivariate polynomial functions, restricted to nonnegative integer arguments. Moreover, the polynomials $\prod_{i=1}^nx_i(x_i-1)\cdots(x_i-y_i+1)$ for different $y$ have different leading terms $x^y$ (by ordering the terms first according to their degree, then according to lexicographic order), and therefore are linearly independent on $\RR$.
\end{remark}

\section{Graph-related equilibria in deterministic reaction systems}\label{sec:equilibria_deterministic}

%

In the deterministic setting, graph-related symmetries of a reaction network result in certain special states, which we now define. 

 \begin{definition} \label{def:balanced_states}
 Consider a deterministic reaction system $(\G, \Lambda)$, let $c\in\RR^n$, and with a slight abuse of notation define $\lambda_{y \to y'} = 0$ if $y \to y' \notin \R$. Then
 \begin{enumerate}[(a)]
  \item $c$ is said to be a \emph{reaction balanced state of $(\G, \Lambda)$} (or {\em detailed balanced state of $(\G, \Lambda)$}) if 
  for every pair of complexes $y,y'\in\C$,
 \begin{equation}\label{eq:detbal}
 \lambda_{y\to y'}(c)=\lambda_{y'\to y}(c)
 \end{equation}
   \item $c$ is said to be a \emph{complex balanced state of $(\G, \Lambda)$} if for every complex $y\in\C$, 
  \begin{equation} \label{eq:compbal}
  \sum_{y'\in\C}\lambda_{y\to y'}(c)=\sum_{y'\in\C} \lambda_{y'\to y}(c)
   \end{equation}
 \item $c$ is said to be a {\em reaction vector balanced state of $(\G, \Lambda)$} if for all $\xi \in \RR^n$, 
  \begin{equation} \label{eq:rvb}
 \sum_{y\to y'\in\R \,:\, y' -y = \xi} \lambda_{y \to y'}(c) = \sum_{y\to y'\in\R \,:\, y' -y = -\xi} \lambda_{y \to y'}(c)
 \end{equation}
 \item $c$ is said to be a \emph{cycle balanced state of $(\G, \Lambda)$} if for every sequence of distinct complexes $(y_1, \ldots, y_j) \subseteq \C$ where $j \ge 3$, 
  \begin{equation} \label{def:cyc_bal}
 \prod_{i=1}^j \lambda_{y_i \to y_{i+1}}(c)=\prod_{i=1}^j \lambda_{y_{i+1} \to y_i}(c)
   \end{equation}
where by definition $y_{j+1} := y_1$. 
 \end{enumerate} 
 \end{definition}
 
 Part (a) of Definition \ref{def:balanced_states} appears as `detailed balanced state' in the chemical reaction network theory literature. For the purposes of this paper, we refer to the concept as `reaction balanced state' instead, in order to avoid any potential confusion with the concept of detailed balance of Markov chain theory literature. 
 
 Part (c) of Definition \ref{def:balanced_states} does not appear in the literature on deterministic reaction networks to the best of our knowledge. One reason may be that the existence of reaction vector balanced states does not relate well with structural properties of the reaction graph (see Theorem \ref{thm:necessary}) and does not imply existence and uniqueness of other equilibria within different positive compatibility classes (see Remark \ref{rem:rvb_not_uniqueness}). However, we introduce reaction vector balanced state because it is a natural deterministic counterpart of the concept of `detailed balanced measure', a basic notion in stochastic theory. For the purposes of this paper, we refer to detailed balanced measure as reaction vector balanced measure, which makes the correspondence with the deterministic setting explicit, and avoids potential confusion with detailed balance of deterministic chemical reaction networks.
 
Part (d) of Definition \ref{def:balanced_states} appears as `formal balance' in \cite{dickenstein2011far}, which is an extension to general kinetics of the cycle conditions appearing in \cite{feinberg1989necessary} and \cite{schuster1989generalization}, both of which in turn are extensions of Wegscheider's cycle conditions valid for monomolecular reactions. 
 
\begin{example}
Consider the following deterministic reaction system $(\G,\Lambda)$. 
\begin{equation}\label{eq:example_triangle}
  \begin{tikzpicture}[baseline={(current bounding box.center)}, scale=0.8]
   \node[state] (A)  at (0,0)  {$2A$};
   \node[state] (B)  at (3,3)  {$A+B$};
   \node[state] (C)  at (6,0)  {$2B$};
   \path[->]
    (A) edge[bend left] node {$\lambda_{1+}$} (B)
    	edge[bend left] node {$\lambda_{3-}$} (C)
    (B) edge[bend left] node {$\lambda_{2+}$} (C)
         edge[bend left] node {$\lambda_{1-}$} (A)
    (C) edge[bend left] node {$\lambda_{3+}$} (A)
          edge[bend left] node {$\lambda_{2-}$} (B);
  \end{tikzpicture}
 \end{equation}
 For purposes of this example, $\lambda_{*}$ that is written on top of an arrow denotes the rate function (and not the mass action rate constant) of the corresponding reaction. Let $c$ be a vector of $\RR^n$.
\been[(a)]
\item $c$ is a reaction balanced state of $(\G,\Lambda)$ if the following hold:
\been[(i)]
\item $\lambda_{1+}(c) = \lambda_{1-}(c)$, 
\item $\lambda_{2+}(c) = \lambda_{2-}(c)$,
\item $\lambda_{3+}(c) = \lambda_{3-}(c)$. 
\enen

\item $c$ is a complex balanced state of $(\G,\Lambda)$ if the following hold:
\been[(i)]
\item $\lambda_{2+}(c) +  \lambda_{1-}(c) = \lambda_{2-}(c) + \lambda_{1+}(c)$, 
\item $\lambda_{3+}(c) + \lambda_{2-}(c) = \lambda_{3-}(c) + \lambda_{2+}(c)$,
\item $\lambda_{1+}(c) + \lambda_{3-}(c) = \lambda_{1-}(c) + \lambda_{3+}(c)$. 
\enen

\item $c$ is a reaction vector balanced state of $(\G,\Lambda)$ if the following hold:
\been[(i)]
\item $\lambda_{1+}(c) + \lambda_{2+}(c) = \lambda_{1-}(c) + \lambda_{2-}(c)$, 
\item $\lambda_{3+}(c) = \lambda_{3-}(c)$.
\enen

\item $c$ is a cycle balanced state of $(\G,\Lambda)$ if the following holds: \\
$
\lambda_{1+}(c) \lambda_{2+}(c) \lambda_{3+}(c) = \lambda_{1-}(c) \lambda_{2-}(c) \lambda_{3-}(c).
$

\enen
 \end{example}
 
 We now define what it means for a deterministic reaction system to be `graphically balanced', i.e. as being one of complex balanced, reaction balanced, reaction vector balanced or cycle balanced. Recall that $c$ is said to be active if $\lambda_{y \to y'}(c)>0$ for all $y \to y' \in \R$. 
\begin{definition}\label{def:balanced_system}
Let $(\G,\Lambda)$ be a deterministic reaction system. Suppose that $(\G,\Lambda)$ has at least one active equilibrium and suppose that every active equilibrium is complex balanced (or reaction balanced, or reaction vector balanced or cycle balanced, resp.). Then we say that $(\G,\Lambda)$ is a {\em complex balanced (or reaction balanced, or reaction vector balanced or cycle balanced, resp.)} reaction system. 
\end{definition}

\begin{remark}\label{rem:balanced_system_mass_action}
 For a deterministic mass action system $(\G,K_D)$, we have that $(\G,K_D)$ is complex balanced (or reaction balanced, or reaction vector balanced or cycle balanced, resp.) if there is at least one positive equilibrium, and all positive equilibria are complex balanced (or reaction balanced, or reaction vector balanced or cycle balanced, resp.).
This follows from the fact that for deterministic mass action, $\lambda_{y\to y'}(c)>0$ if and only if the entries of $c$ relative to the species appearing in $y$ are positive. Moreover, if $c$ is an equilibrium and $\lambda_{y\to y'}(c)>0$ for all $y\to y'\in\R$, then necessarily all species appearing in a product complex need to appear in a source complex as well (or at $c$ they would not be at equilibrium). Since we require that all species of $\G$ appear in at least one complex, it follows that an equilibrium $c$ with $\lambda_{y\to y'}(c)>0$ for all $y\to y'\in\R$ is necessarily positive.
\end{remark}

\subsection{Results for arbitrary kinetics}

\begin{theorem}[Balanced states are equilibria] \label{thm:bal_eq}
Let $(\G,\Lambda)$ be a deterministic reaction system and let $c \in \RR^n$. Suppose that any one of the following conditions holds:
\been[(i)]
\item $c$ is a reaction balanced state of $(\G,\Lambda)$. 
\item $c$ is a complex balanced state of $(\G,\Lambda)$. 
\item $c$ is a reaction vector balanced state of $(\G,\Lambda)$. 
\enen
Then  $c$ is an equilibrium of $(\G, \Lambda)$. 
\end{theorem}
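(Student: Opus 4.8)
The plan is to verify directly that the defining sum $\sum_{y\to y'\in\R}(y'-y)\lambda_{y\to y'}(c)$ vanishes in each of the three cases, by regrouping its finitely many terms in the way that matches the relevant balance condition. There is no analytic obstacle here; the entire content is a rearrangement of a finite sum. The one thing to keep track of carefully is the abuse of notation $\lambda_{y\to y'}=0$ for $y\to y'\notin\R$, which lets me freely extend sums over ordered pairs of complexes without changing their value.

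For the complex balanced case (ii), I would group the contributions by complex rather than by reaction. Collecting the coefficient of a fixed complex $z\in\C$ in $\sum_{y\to y'\in\R}(y'-y)\lambda_{y\to y'}(c)$, each $z$ appears with a $+$ sign from every reaction having $z$ as product and with a $-$ sign from every reaction having $z$ as reactant, so the equilibrium sum becomes $\sum_{z\in\C} z\big(\sum_{y\in\C}\lambda_{y\to z}(c)-\sum_{y'\in\C}\lambda_{z\to y'}(c)\big)$; that is, each complex $z$ is weighted by its total inflow minus its total outflow at $c$. Complex balance \eqref{eq:compbal} says exactly that this parenthesis is zero for every $z$, so the sum vanishes and $c$ is an equilibrium.

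The reaction balanced case (i) then reduces to (ii): summing \eqref{eq:detbal} over all $y'\in\C$ for fixed $y$ yields \eqref{eq:compbal} term by term, so a reaction balanced state is complex balanced and hence an equilibrium. Equivalently, one may pair each reaction $y\to y'$ with its reverse and observe that $(y'-y)\lambda_{y\to y'}(c)+(y-y')\lambda_{y'\to y}(c)=0$ by \eqref{eq:detbal}, with the convention $\lambda=0$ handling reactions whose reverse is absent.

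For the reaction vector balanced case (iii), I would instead group by reaction vector. Letting $\xi$ range over the distinct reaction vectors, I write the equilibrium sum as $\sum_\xi \xi\, A_\xi(c)$ where $A_\xi(c)=\sum_{y\to y'\,:\,y'-y=\xi}\lambda_{y\to y'}(c)$. Pairing $\xi$ with $-\xi$, the combined contribution is $\xi\big(A_\xi(c)-A_{-\xi}(c)\big)$, which is zero by \eqref{eq:rvb}; since no reaction has $y=y'$, the vector $\xi=0$ never occurs, and summing over the unordered pairs $\{\xi,-\xi\}$ exhausts all terms and gives zero. The only point needing a word of care — really the sole bookkeeping step in the argument — is that this regrouping by reaction vector is legitimate and that the $\xi\leftrightarrow-\xi$ pairing covers every term exactly once.
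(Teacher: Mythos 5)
Your proposal is correct and follows essentially the same route as the paper's proof: regrouping the finite equilibrium sum by complex for parts (i)--(ii) (equivalently, multiplying \eqref{eq:compbal} by $y$ and summing over $y\in\C$) and by the $\xi\leftrightarrow-\xi$ pairing of reaction vectors for part (iii), with the convention $\lambda_{y\to y'}=0$ for $y\to y'\notin\R$ handling absent reverse reactions. The extra bookkeeping you spell out (that $\xi=0$ never occurs and that the pairing exhausts all terms) is consistent with the paper's one-line argument.
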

\begin{proof}
By summing \eqref{eq:detbal} over $y' \in \C$, we see that a reaction balanced state is a complex balanced state. To see that a complex balanced state is an equilibrium, multiply both sides of \eqref{eq:compbal} by $y$ and then sum over $y \in \C$. 
If $c$ is a reaction vector balanced state, multiply both sides of \eqref{eq:rvb} by $\xi$ and then sum over the vectors $\xi\in\RR^n$ such that at least one among $\xi$ and $-\xi$ is a reaction vector.
\end{proof}

\begin{remark} \label{rem:cyc_bal}
A cycle balanced state is not necessarily an equilibrium. In fact, for a deterministic mass action system, either every state is cycle balanced or no positive state is cycle balanced: this can be shown by proving that if a positive state $c$ is cycle balanced, then necessarily all states must be cycle balanced. Assume that $c$ is positive and cycle balanced. From the form of deterministic mass action kinetics and from \eqref{def:cyc_bal}, it follows that for every sequence of distinct complexes $(y_1, \ldots, y_j) \subseteq \C$ where $j \ge 3$, 
  \begin{equation*}
 c^{\sum_{i=1}^j y_i}\prod_{i=1}^j \kappa_{y_i \to y_{i+1}}=c^{\sum_{i=1}^j y_{i+1}}\prod_{i=1}^j \kappa_{y_{i+1} \to y_i},
   \end{equation*}
where $y_{j+1}= y_1$ and $\kappa_{y\to y'}$ is considered to be 0 if $y\to y'\notin\R$. Note that the powers of the positive vector $c$ on the two sides of the equation are the same, hence we have
  \begin{equation*}
 \prod_{i=1}^j\kappa_{y_i \to y_{i+1}}=\prod_{i=1}^j \kappa_{y_{i+1} \to y_i}.
   \end{equation*}
It follows that for any $z\in\RR^n$ we have
  \begin{align*}
  \mathbbm{1}_{\{z\geq 0\}}z^{\sum_{i=1}^j y_i}\prod_{i=1}^j\kappa_{y_i \to y_{i+1}}=\mathbbm{1}_{\{z\geq 0\}}z^{\sum_{i=1}^j y_{i+1}}\prod_{i=1}^j \kappa_{y_{i+1} \to y_i},
   \end{align*}
which implies that $z$ is cycle balanced.
\end{remark}

The following is a generalization of well-known results (e.g.\ \cite{horn1972necessary}). The generalization covers the new type of balanced states described in this paper. Furthermore the results extend to the arbitrary kinetics considered here and also to the possibility for the states to be outside of the positive orthant, $\RR^n_{>0}$. 

\begin{theorem}[Necessary conditions for existence of a balanced state] \label{thm:necessary}
 Let $(\G,\Lambda)$ be a deterministic reaction system and let $c \in \RR^n$. Let $\G_c=(\SS_c, \C_c, \R_c)$ be the active subnetwork of $(\G,\Lambda)$ at $c$. Then, the following holds:
 \been[(i)]
 \item If $c$ is a reaction balanced state of $(\G,\Lambda)$, then $\G_c$ is reversible. 
  \item If $c$ is a complex balanced state of $(\G,\Lambda)$, then $\G_c$ is weakly reversible. 
   \item If $c$ is a reaction vector balanced state of $(\G,\Lambda)$, then for every $y \to y' \in \R_c$, there exists $\tilde y \to \tilde y' \in \R_c$ with $y + \tilde y  = y' + \tilde y'$. 
   \item Let $c$ be a cycle balanced state of $(\G,\Lambda)$. If for a sequence of distinct complexes $(y_1, \ldots, y_j)$ where $j \ge 3$, 
$\prod_{i=1}^j \lambda_{y_i \to y_{i+1}}(c) > 0$ with $y_{j+1}=y_1$, then $y_i \to y_{i+1}$ is a reversible reaction for all $i \in \{ 1, \ldots, j\}$. 
 \enen
 
 \end{theorem}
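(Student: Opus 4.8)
The plan is to treat the four parts separately, exploiting the stated convention that $\lambda_{y\to y'}=0$ whenever $y\to y'\notin\R$, so that positivity of a rate at $c$ is equivalent to membership of the corresponding reaction in $\R_c$. With this ``positivity dictionary'' in hand, three of the four parts follow at once from the respective balance equations; the genuinely graph-theoretic work is confined to part (ii).

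For part (i), I would take any $y\to y'\in\R_c$, so that $\lambda_{y\to y'}(c)>0$. The reaction balance equation \eqref{eq:detbal} gives $\lambda_{y'\to y}(c)=\lambda_{y\to y'}(c)>0$, and by the convention this forces $y'\to y\in\R$, hence $y'\to y\in\R_c$. Thus every active reaction has its reverse active, i.e.\ $\G_c$ is reversible. For part (iii), given $y\to y'\in\R_c$ I set $\xi=y'-y$; the left-hand side of \eqref{eq:rvb} is at least $\lambda_{y\to y'}(c)>0$, so the right-hand side is positive, producing some $\tilde y\to\tilde y'\in\R_c$ with $\tilde y'-\tilde y=-\xi$, which rearranges to $y+\tilde y=y'+\tilde y'$. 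For part (iv), if $\prod_{i=1}^j\lambda_{y_i\to y_{i+1}}(c)>0$ then each forward factor is positive, and by \eqref{def:cyc_bal} the product of the reverse rates is positive as well, so each reverse factor $\lambda_{y_{i+1}\to y_i}(c)>0$; hence both $y_i\to y_{i+1}$ and its reverse lie in $\R_c$, making each such reaction reversible.

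The substantive part is (ii). Here I would read the complex balance condition \eqref{eq:compbal}, restricted to active reactions, as a conservation (Kirchhoff) law for the positive edge weights $\lambda_{y\to y'}(c)$ on the reaction graph of $\G_c$: at every node the total outgoing weight equals the total incoming weight. The claim is that such a balanced positive flow forces weak reversibility, i.e.\ every active reaction lies on a closed directed path of $\G_c$. I would argue by contradiction: suppose some $y\to y'\in\R_c$ lies on no directed cycle. Let $R\subseteq\C_c$ be the set of complexes reachable from $y'$ along active reactions (with $y'\in R$). Since $y\to y'$ is on no cycle, $y$ is not reachable from $y'$, so $y\notin R$ while $y'\in R$.

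I would then sum \eqref{eq:compbal} over all complexes in $R$. By construction every active reaction issuing from a complex of $R$ also lands in $R$, so the summed outgoing weight equals exactly the total weight of reactions internal to $R$, whereas the summed incoming weight equals that same internal weight plus the weight of active reactions entering $R$ from outside. Cancelling the internal contribution forces the total weight entering $R$ from outside to vanish; but $y\to y'$ is precisely such an entering reaction and has positive weight, a contradiction. Hence no active reaction is acyclic and $\G_c$ is weakly reversible. I expect this telescoping of the conservation law over a reachable set to be the only step needing care, chiefly in verifying that outgoing active reactions from $R$ stay in $R$ so that the internal weights cancel cleanly.
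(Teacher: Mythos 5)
Your proof is correct. Parts (i), (iii) and (iv) are handled exactly as in the paper: one reads off from the balance equations that a nonzero left-hand side forces a nonzero right-hand side, which via the convention $\lambda_{y\to y'}=0$ for $y\to y'\notin\R$ translates into membership of the required reactions in $\R_c$. For part (ii) you take a genuinely different route. The paper's proof is a reduction: it equips $\G_c$ with mass action kinetics with rate constants $\kappa_{y\to y'}=\lambda_{y\to y'}(c)$, observes that the all-ones vector is then a complex balanced state, and invokes Theorem 2B of Horn (1972) to conclude weak reversibility. You instead give a self-contained graph-theoretic argument: assuming an active reaction $y\to y'$ lies on no directed cycle, you take the set $R$ of complexes reachable from $y'$ in $\G_c$, note that $R$ is closed under active reactions while $y\notin R$, and sum the complex balance identity over $R$; the internal weights cancel and the positive weight of $y\to y'$ entering $R$ from outside yields a contradiction. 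This cut argument is essentially the classical proof hiding behind the citation, so what you gain is self-containedness at the cost of a few extra lines; the verification that $R$ is forward-closed and that $y\notin R$ (since reachability of $y$ from $y'$ would close a cycle through $y\to y'$) is exactly the care the step needs, and you supply it. The only cosmetic omission is the degenerate case $\R_c=\emptyset$ in part (i), where one should recall that the empty network is reversible by convention; your argument is vacuously fine there, but the paper mentions it explicitly.
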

 \begin{proof}
 The proof of parts (i), (iii) and (iv)  follows immediately from Definition \ref{def:balanced_states} by noticing that for the balancing conditions to hold, if one side of the equations is nonzero, then so must be the other side. In case $\R_c$ is empty, recall that the empty network is reversible.

 The statement of part (ii) is a slight generalization of Theorem 2B of \cite{horn1972necessary}, where the result was first proven under mild conditions on the kinetics. The proof of the original result of \cite{horn1972necessary} is valid under the more general setting of this paper as well. However, a shorter way to prove the theorem consists in a trick similar to one used in \cite{dickenstein2011far}.
 
 By definition, the reactions of $\R_c$ are the reactions that are active at $c$. If $\R_c$ is empty, then $\G_c$ is weakly reversible and the proof is completed. Otherwise, if $c$ satisfies \eqref{eq:compbal}, then the network $\G_c$ endowed with mass action kinetics with $\kappa_{y\to y'}=\lambda_{y\to y'}(c)$ for all $y\to y'$ is complex balanced, a complex balanced state being the vector whose entries are all 1. Hence, $\G_c$ is weakly reversible by Theorem 2B of \cite{horn1972necessary}.
 \end{proof}
   
It is well known that a positive reaction balanced state is complex balanced \cite{horn1972necessary}. Moreover, it is known that a strictly positive state $c$ is complex balanced and cycle balanced if and only if $c$ is reaction balanced, under natural mild conditions on the kinetics \cite{dickenstein2011far}. Here we give a generalization of the result that covers more general kinetics (according to the definition of this paper) and includes the notion of reaction vector balance. 

\begin{theorem}[Relations between different balanced states] \label{thm:det_rel}
Let $(\G,\Lambda)$ be a deterministic reaction system. Let $c \in \RR^n$. 
\been[(i)]
\item If $c$ is reaction balanced then it is reaction vector balanced, complex balanced and cycle balanced. 
\item If $c$ is complex balanced and cycle balanced, then $c$ is reaction balanced. 
\enen
\end{theorem}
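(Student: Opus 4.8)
The plan is to treat the two parts separately, dispatching part (i) by direct inspection and reserving the real work for part (ii).

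For part (i), all three implications follow immediately from the defining identity $\lambda_{y\to y'}(c)=\lambda_{y'\to y}(c)$ of reaction balance, using the convention $\lambda_{y\to y'}=0$ off $\R$. That reaction balance implies complex balance is already recorded in the proof of Theorem~\ref{thm:bal_eq}: summing the identity over $y'\in\C$ gives \eqref{eq:compbal}. For cycle balance I would simply note that each factor $\lambda_{y_i\to y_{i+1}}(c)$ in \eqref{def:cyc_bal} equals the opposite factor $\lambda_{y_{i+1}\to y_i}(c)$, so the two products agree term by term. For reaction vector balance the cleanest route is to extend the two sums in \eqref{eq:rvb} so that they run over all ordered pairs of distinct complexes (the extra terms vanish by the convention) and then apply the involution $(y,y')\mapsto(y',y)$, which is a bijection between the index set $\{y'-y=\xi\}$ and the index set $\{y'-y=-\xi\}$ and preserves each summand because $\lambda_{y\to y'}(c)=\lambda_{y'\to y}(c)$.

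For part (ii), write $a_{y\to y'}:=\lambda_{y\to y'}(c)$ and let $\G_c$ be the active subnetwork at $c$, so that $a_{y\to y'}>0$ exactly for $y\to y'\in\R_c$. For a pair of complexes with both directions inactive, reaction balance is the trivial identity $0=0$; hence it suffices to prove that $\G_c$ is reversible (so no pair has exactly one active direction) and that $a_{y\to y'}=a_{y'\to y}$ for every $y\to y'\in\R_c$. Complex balance gives, through Theorem~\ref{thm:necessary}(ii), that $\G_c$ is weakly reversible, i.e.\ every connected component of its reaction graph is strongly connected. To upgrade this to reversibility I would take any $y\to y'\in\R_c$ and, by strong connectivity, a simple directed path of active reactions from $y'$ back to $y$; together with $y\to y'$ this closes into a cycle of distinct complexes. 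If the path has length one then $y'\to y\in\R_c$ already; otherwise the cycle has length $\ge 3$, all its forward reactions are active, and cycle balance forces the product of the backward rates to be positive as well, whence every reverse reaction — in particular $y'\to y$ — is active.

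With $\G_c$ now reversible, I would run the standard Kolmogorov-cycle argument. Define the edge ratios $w_{y\to y'}:=a_{y\to y'}/a_{y'\to y}>0$ on $\R_c$; cycle balance says precisely that $\prod_i w_{y_i\to y_{i+1}}=1$ around every simple cycle (length-two cycles being trivial), so $\log w$ has zero sum around every cycle and is therefore a vertex-potential difference: on each component there is $\phi:\C_c\to\RR_{>0}$ with $\phi(y)\,a_{y\to y'}=\phi(y')\,a_{y'\to y}$ for all $y\to y'\in\R_c$. In other words $\phi$ is a reversible, hence stationary, measure for the finite Markov chain on that component with transition rates $a_{y\to y'}$. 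Complex balance says the uniform measure $\mathbf 1$ is also stationary for this chain, and since the component is strongly connected the stationary measure is unique up to a positive scalar; therefore $\phi$ is constant, giving $a_{y\to y'}=a_{y'\to y}$ on every component and completing the proof. The routine parts are part (i) and the reduction of part (ii) to the active subnetwork; the crux is part (ii), where the two substantive steps are (a) extracting reversibility of $\G_c$ from weak reversibility plus cycle balance and (b) passing from the multiplicative cycle condition to equality of opposing rates via the potential $\phi$ and uniqueness of the stationary measure. I expect step (a) — specifically, ensuring the cycle one builds has distinct complexes and length at least three so that \eqref{def:cyc_bal} genuinely applies — to be the main obstacle to a fully rigorous write-up.
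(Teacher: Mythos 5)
Your part (i) is the same argument as the paper's (sum the identity $\lambda_{y\to y'}(c)=\lambda_{y'\to y}(c)$ over the appropriate index sets, take products for cycle balance); the involution $(y,y')\mapsto(y',y)$ you use for reaction vector balance is just a slightly more explicit phrasing of the paper's ``sum over $y\to y'\in\R$ with $y'-y=\xi$''. Part (ii) is where you genuinely diverge, and your route is correct. The paper disposes of the case where all rates vanish at $c$, then equips the active subnetwork $\G_c$ with mass action kinetics with rate constants $\kappa_{y\to y'}=\lambda_{y\to y'}(c)$, observes that the all-ones vector is then complex balanced and cycle balanced, and invokes \cite[Theorem 1.1]{dickenstein2011far} as a black box. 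You instead re-prove that cited result from scratch: (a) upgrade weak reversibility of $\G_c$ (Theorem~\ref{thm:necessary}(ii)) to reversibility by closing each active edge $y\to y'$ into a simple cycle via a shortest active path from $y'$ to $y$ and applying \eqref{def:cyc_bal} (this step is in fact already available to you as Theorem~\ref{thm:necessary}(iv)); (b) use the Kolmogorov cycle criterion to build a potential $\phi$ with $\phi(y)a_{y\to y'}=\phi(y')a_{y'\to y}$, note that complex balance says the uniform measure is also stationary for the finite irreducible chain on each connected component of $\C_c$, and conclude $\phi$ is constant by uniqueness of the stationary measure. Both substantive steps hold up --- the shortest path guarantees distinct complexes, length-$2$ cycles are automatically $w$-balanced, and the fundamental cycles of a spanning tree (all simple, of length $\ge 3$) suffice to define $\phi$ consistently. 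What your approach buys is self-containedness and transparency about where each hypothesis enters (cycle balance gives the potential, complex balance pins it down to a constant); what the paper's approach buys is brevity and a clean reduction of general kinetics to the mass-action statement already in the literature, which is the same reduction trick it uses for Theorem~\ref{thm:necessary}(ii).
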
 
 \begin{proof}
 Let $c$ be a reaction balanced state of $(\G,\Lambda)$. Then it follows, by summing \eqref{eq:detbal} over $y' \in \C$ that $c$ is a complex balanced state. Furthermore, by summing \eqref{eq:detbal} over $y \to y' \in \R : y' -y = \xi$ it follows that $c$ is a reaction vector balanced state.  For every sequence $(y_1, \ldots, y_j)$, reaction balance of $c$ implies that $\lambda_{y_i \to y_{i+1}}(c) = \lambda_{y_{i+1} \to y_{i}}(c)$. Cycle balance of $c$ then follows from taking the product over $i \in \{1, \ldots, j\}$ on both sides of the identity. For part (ii), if $\lambda_{y\to y'}(c)=0$ for all $y\to y'\in \R$ then clearly $c$ is reaction balanced. Otherwise, consider the active subnetwork at $c$, denoted by $\G_c=(\SS_c, \C_c, \R_c)$, and let $n_c$ be the cardinality of $\SS_c$. Equip $\G_c$ with mass action kinetics, with $\kappa_{y\to y'}=\lambda_{y\to y'}(c)$ for all $y\to y'\in \R_c$. Then, the vector $\tilde{c}\in\RR^{n_c}$ whose entries are all 1 is complex balanced and cycle balanced for $\G_c$ with this choice of mass action kinetics. We can then conclude by \cite[Theorem 1.1]{dickenstein2011far} that $\tilde{c}$ is reaction balanced. This means that for any $y\to y'\in\R_c$ we have
 $$\lambda_{y\to y'}(c)=\lambda_{y\to y'}(c)\tilde{c}^y=\lambda_{y'\to y}(c)\tilde{c}^y=\lambda_{y'\to y}(c),$$
 which concludes the proof.
\end{proof}

\begin{remark}\label{rem:no_implication_det}
In general, no one individual condition of the three: complex balance, reaction vector balance, and cycle balance implies any of the remaining two, not even in the more restricting setting of mass action kinetics. We check the remaining pairwise conditions below. Note that this is a novel study since the concept of reaction vector balance is not present in the literature of deterministic reaction networks, to the best of our knowledge.
\been 
\item (Complex balance \& Reaction vector balance $\centernot \implies$ Cycle balance) 
Consider the mass action system $(\G,K_D)$ shown below, where the chosen rate constants are written on top of the arrow of the corresponding reaction:
\begin{equation} \label{eq:square}
  \begin{tikzpicture}[baseline={(current bounding box.center)}]
   \node[state] (A)  at (0,3)  {$3A$};
   \node[state] (B)  at (3,3)  {$2A+B$};
   \node[state] (C)  at (3,0)  {$3B$};
   \node[state] (D)  at (0,0)  {$A+2B$};
   \path[->]
    (A) edge[bend left] node {2} (B)
    	edge[bend left] node {1} (D)
    (B) edge[bend left] node {2} (C)
    	edge[bend left] node {1} (A)
    (C) edge[bend left] node {2} (D)
    	edge[bend left] node {1} (B)
    (D) edge[bend left] node {2} (A)
    	edge[bend left] node {1} (C);
  \end{tikzpicture}
 \end{equation}
The state $(1,1)$ is complex balanced and reaction vector balanced. However, $(1,1)$ is neither reaction balanced nor cycle balanced. In fact, all positive equilibria are of the form $(s,s)$ for some $s>0$ and each of these equilibria is both complex balanced and reaction vector balanced. This shows that $(\G,K_D)$ is complex balanced as well as reaction vector balanced (see Definition \ref{def:balanced_system} and Remark \ref{rem:balanced_system_mass_action}) but $(\G,K_D)$ is neither cycle balanced nor reaction balanced. 

\item (Reaction vector balance \& Cycle balance $\centernot \implies$ Complex balance) 
%
Consider the reaction system in \eqref{eq:example_triangle}, endowed with with mass action kinetics with the following rate constants:
\begin{equation}
  \begin{tikzpicture}[baseline={(current bounding box.center)},scale=0.8]
   \node[state] (A)  at (0,0)  {$2A$};
   \node[state] (B)  at (3,3)  {$A+B$};
   \node[state] (C)  at (6,0)  {$2B$};
   \path[->]
    (A) edge[bend left] node {$1$} (B)
    	edge[bend left] node {$1$} (C)
    (B) edge[bend left] node {$2$} (C)
         edge[bend left] node {$2$} (A)
    (C) edge[bend left] node {$1$} (A)
          edge[bend left] node {$1$} (B);
  \end{tikzpicture}
 \end{equation}
 Then, $(1,1)$ is a positive equilibrium of the mass action system $(\G,K_D)$ that is both reaction vector balanced and cycle balanced, but is not complex balanced.  Moreover, all equilibria are of the form $(s,s)$ for some $s\geq0$, and they are all reaction vector balanced and cycle balanced. It follows that $(\G,K_D)$ is reaction vector balanced and cycle balanced (see Definition \ref{def:balanced_system} and Remark \ref{rem:balanced_system_mass_action}) but it is not complex balanced.
 \enen
\end{remark}



\subsection{Results for mass action kinetics}
The following result on complex balancing is known
\cite{horn1972necessary, feinberg1972, cappelletti:complex_balanced} and we extend the result to reaction balancing.
\begin{theorem}[Existence and uniqueness of positive equilibria] \label{thm:bal}
Let $(\G,K_D)$ be a deterministic mass action system. If $(\G,K_D)$ has a positive complex balanced (reaction balanced, resp.) state, then there exists a unique positive equilibrium within every positive compatibility class of $(\G,K_D)$, and every equilibrium of $(\G,K_D)$ is complex balanced (reaction balanced, resp.). 
\end{theorem}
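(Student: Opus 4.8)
The plan is to dispatch the complex balanced statement first (this is the classical Horn--Jackson--Feinberg theorem, so I would either cite \cite{horn1972necessary, feinberg1972} or reprove it by the argument sketched below) and then deduce the reaction balanced statement from it, using only the relations already established in Theorem \ref{thm:det_rel} and Remark \ref{rem:cyc_bal}. The reduction of the new ``reaction balanced, resp.'' part to the known complex balanced part is the conceptual heart of the argument and is short.

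For the complex balanced part, suppose $c \in \RR^n_{>0}$ is complex balanced, and first characterize the positive equilibria. Writing the mass action ODE as $\dot z = \sum_{y\to y'}\kappa_{y\to y'} z^y (y'-y)$, and setting, for a positive $x$, $\mu = \ln(x/c)$ and $w_y = \langle y, \mu\rangle$ (so that $x^y = c^y e^{w_y}$), one computes that $\langle \mu, \dot x\rangle = \sum_{y\to y'}\kappa_{y\to y'} c^y e^{w_y}(w_{y'}-w_y)$. Applying the elementary convexity inequality $e^{a}(b-a) \le e^{b}-e^{a}$ term by term, then regrouping the sum by complex and invoking complex balance of $c$, yields $\langle \mu, \dot x\rangle \le \sum_{y\to y'}\kappa_{y\to y'} c^y (e^{w_{y'}}-e^{w_y}) = 0$, with equality iff $w_{y'}=w_y$ for every reaction, i.e. iff $\mu = \ln(x/c) \in S^\perp$. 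Hence any positive equilibrium satisfies $\ln(x/c)\in S^\perp$; conversely, if $\ln(x/c)\in S^\perp$ then $w_y$ is constant on each linkage class, which shows directly that $x$ is complex balanced and hence an equilibrium by Theorem \ref{thm:bal_eq}. This identifies the positive equilibria, the positive complex balanced states, and the set $Z_c = \{x>0 : \ln(x/c)\in S^\perp\}$ as one and the same, proving that every positive equilibrium is complex balanced.

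Existence and uniqueness of a positive equilibrium in a positive compatibility class $\PP = (a+S)\cap\RR^n_{\ge 0}$ is then Birch's theorem: minimize the strictly convex pseudo-Helmholtz function $h(x) = \sum_i (x_i \ln(x_i/c_i) - x_i + c_i)$ over $\PP$. Its superlinear growth makes it coercive along every recession direction of $\PP$, so the minimum is attained; the blow-up of $\ln(x_i/c_i)$ as $x_i \to 0^+$ forces the minimizer $x^*$ into the relative interior $\PP \cap \RR^n_{>0}$; and the first-order condition there reads $\nabla h(x^*) = \ln(x^*/c) \perp S$, i.e. $x^* \in Z_c$. Strict convexity gives uniqueness. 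I expect this convex-analytic step---attainment together with interiority of the minimizer over a possibly unbounded compatibility class---to be the only genuine obstacle; the rest is bookkeeping.

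Finally, for the reaction balanced case I would argue by reduction rather than repeat any analysis. If $c>0$ is reaction balanced, then by Theorem \ref{thm:det_rel}(i) it is both complex balanced and cycle balanced. The complex balanced conclusion just established then already gives a unique positive equilibrium in each positive compatibility class and shows that every equilibrium is complex balanced. To upgrade ``complex balanced'' to ``reaction balanced,'' note that $c$ is a positive cycle balanced state, so by Remark \ref{rem:cyc_bal} every state---in particular every positive equilibrium---is cycle balanced. Thus each positive equilibrium is simultaneously complex balanced and cycle balanced, and Theorem \ref{thm:det_rel}(ii) then yields that it is reaction balanced, completing the proof.
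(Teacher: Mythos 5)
Your reduction of the reaction-balanced case to the complex-balanced case is exactly the paper's argument (Theorem \ref{thm:det_rel}(i), Remark \ref{rem:cyc_bal}, then Theorem \ref{thm:det_rel}(ii)), and your convex-analytic treatment of the complex-balanced case is a legitimate re-derivation of the classical Horn--Jackson--Feinberg result that the paper simply cites. However, there is a genuine gap: the theorem asserts that \emph{every} equilibrium of $(\G,K_D)$ is complex balanced (reaction balanced, resp.), not merely every \emph{positive} equilibrium, and your proof stops at the positive ones. Your identification of the positive equilibria with the set $Z_c=\{x>0:\ln(x/c)\in S^\perp\}$ says nothing about equilibria on the boundary of $\RR^n_{\ge 0}$, and your final sentence (``each positive equilibrium is \dots reaction balanced, completing the proof'') leaves the boundary case entirely unaddressed.

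The paper closes this gap in two distinct ways. For the complex-balanced claim it invokes Theorem 4 of \cite{cappelletti:complex_balanced}, which shows that in a complex balanced mass action system all equilibria, not just the positive ones, are complex balanced. For the reaction-balanced claim it then runs an explicit argument for a boundary equilibrium $c\in\RR^n_{\ge 0}\setminus\RR^n_{>0}$: one passes to the active subnetwork $\G_c$, observes that $c$ is a complex balanced equilibrium of $\G_c$ (so $\G_c$ is weakly reversible), that $\G_c$ inherits cycle balance from $\G$ because cycles of $\G_c$ are cycles of $\G$, and that the restriction $\tilde c$ of $c$ to $\SS_c$ is strictly positive; Theorem \ref{thm:det_rel}(ii) applied to $\G_c$ at $\tilde c$ then yields reaction balance of $c$. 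Neither of these steps is automatic from what you have written --- the interiority argument in your Birch-theorem step actively excludes boundary points, so some separate device (citation or a restriction argument as above) is required to finish the proof as stated.
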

\begin{proof}  Suppose that $(\G,K_D)$ has a positive complex balanced state. It is known that there exists a unique positive equilibrium within every positive compatibility class of $(\G,K_D)$, and every positive equilibrium of $(\G,K_D)$ is complex balanced \cite{horn1972necessary, feinberg1972}, i.e. that $(\G, K_D)$ is complex balanced. Furthermore, in Theorem 4 of \cite{cappelletti:complex_balanced}, it is shown that if $(\G, K_D)$ is complex balanced then all equilibria (not just the positive ones) of $(\G,K_D)$ are complex balanced. 

Let $c$ be a positive reaction balanced state of $(\G,K_D)$. By Theorem \ref{thm:det_rel}, $c$ is cycle balanced and complex balanced. By Remark \ref{rem:cyc_bal}, every state in $\RR^n_{\ge 0}$ is cycle balanced. By the result on complex balance, there is a unique positive equilibrium within every positive compatibility class of $(\G,K_D)$ and every one of these positive equilibria is complex balanced. Since these equilibria are both complex balanced and cycle balanced, by part (ii) of Theorem \ref{thm:det_rel}, these equilibria are reaction balanced. This shows that $(\G,K_D)$ is reaction balanced. Suppose now that $c$ is a boundary equilibrium, i.e. $c \in \RR^n_{\ge 0} \setminus \RR^n_{> 0}$ of a reaction balanced system $(\G,K_D)$. If no reaction is active at $c$, then $c$ is clearly reaction balanced. So we may assume that $\G_c$ is not the empty network. Then $c$ is a complex balanced equilibrium of $\G_c = (\SS_c, \C_c, \R_c)$ by the result on complex balance, in particular $\G_c$ is weakly reversible. Since $\G$ is cycle balanced and $\G_c$ is a subnetwork of $\G$, $\G_c$ is cycle balanced. This follows from the fact that cycles in $\G_c$ are also cycles in $\G$. Let $\tilde c$ denote the restriction of $c$ to the species in $\SS_c$. Note that $\tilde c > 0$ because $\G_c$ is weakly reversible and active at $c$. Thus $\tilde c$ is a positive complex balanced equilibrium of $\G_c$ which is cycle balanced. It follows that $\tilde c$ is a reaction balanced equilibrium of $\G_c$, by part (ii) of Theorem \ref{thm:det_rel}. Therefore, $c$ is a reaction balanced equilibrium of $\G$. 
\end{proof}

Note that uniqueness of positive equilibria within positive compatibility classes is not guaranteed for complex balanced systems with arbitrary kinetics (see, for instance, \cite{muller2012generalized}). 
\begin{remark}
Since the positive compatibility classes of $(\G,K_D)$ partition $\RR^n_{> 0}$, Theorem \ref{thm:bal} implies that if a mass action system $(\G,K_D)$ possesses a positive complex balanced (resp. reaction balanced) state, then every positive equilibrium of $(\G,K_D)$ is complex balanced (resp. reaction balanced). Therefore, a mass action system $(\G,K_D)$ with a positive complex balanced (resp. reaction balanced) state is complex balanced (resp. reaction balanced). Moreover, due to Remarks \ref{rem:cyc_bal} and \ref{rem:balanced_system_mass_action}, a mass action system $(\G,K_D)$ possessing a positive cycle balanced equilibrium is cycle balanced.
\end{remark}
\begin{remark}\label{rem:rvb_not_uniqueness}
We introduced in this paper the concept of reaction vector balanced state, and it is natural to wonder whether for this kind of equilibium something similar to Theorem \ref{thm:bal} holds. While a reaction vector balanced state is an equilibrium, unfortunately there is no statement about uniqueness corresponding to Theorem \ref{thm:bal}. To see this consider the following mass action system $(\G,K_D)$. 
\begin{equation*}
0 \stackrel[11]{6}{\rightleftarrows} A \qquad 2A \stackrel[1]{6}{\rightleftarrows} 3A
\end{equation*}
$(\G,K_D)$ has three distinct positive reaction vector balanced equilibria $z=1$, $z=2$, and $z=3$ within the same positive compatibility class. Furthermore, even when a positive reaction vector balanced state exists, there may be positive compatibility classes which do not contain any positive equilibria. To see this, consider the following mass action system.
\begin{equation}\label{eq:ACR}
B \stackrel[]{1}{\to}  A \qquad A+B \stackrel[]{1}{\to} 2B
\end{equation}
There exists a positive equilibrium $(1,l-1)$ within every compatibility class with $z_A+z_B = l >1$, and each of these equilibria is reaction vector balanced. However, there is no positive equilibrium in the positive compatibility classes with $z_A+z_B=l \le 1$. 

\end{remark}

\section{Graph-related stationarity in stochastic reaction systems}\label{sec:equilibria_stochastic}
In the stochastic setting, graph-related symmetries of a reaction network result in certain special measures, which we now define. 
\begin{definition} \label{def:balanced_measures}
Consider a stochastic reaction system $(\G,\Lambda)$, let $\pi$ be a measure defined on $\ZZ^n$ and with a slight abuse of notation define $\lambda_{y \to y'} = 0$ if $y \to y' \notin \R$. Then 
\been[(a)]
\item $\pi$ is said to be a {\em reaction balanced measure} if for every pair of complexes $y, y' \in \C$ and every $x \in \ZZ^n$
\begin{equation} \label{eq:rxnbal}
\pi(x) \lambda_{y \to y'}(x) = \pi(x+y'-y) \lambda_{y' \to y}(x+y'-y). 
\end{equation}
\item $\pi$ is said to be a {\em complex balanced measure} if for every complex $y \in \C$ and every $x \in \ZZ^n$
\begin{equation} \label{eq:compbal_m}
\pi(x) \sum_{y' \in \C}  \lambda_{y \to y'}(x) = \sum_{y' \in \C} \pi(x+y'-y) \lambda_{y' \to y}(x+y'-y). 
\end{equation} 
\item $\pi$ is said to be a {\em reaction vector balanced measure} if for every every $x \in \ZZ^n$ and every $\xi \in \ZZ^n$
\begin{equation} \label{eq:rxnvecbal}
\pi(x) \sum_{y\to y'\in\R \,:\, y' -y = \xi} \lambda_{y \to y'}(x) = \pi(x+\xi)\sum_{y\to y'\in\R \,:\, y' -y = -\xi} \lambda_{y \to y'}(x+\xi)
\end{equation}
\item $\pi$ is said to be a \emph{cycle balanced measure} if for every every $x \in \ZZ^n$ and every sequence of distinct complexes $(y_1, \ldots, y_j) \subseteq \C$ where $j \ge 3$, 
  \begin{equation} \label{eq:cyc_bal_m}
 \prod_{i=1}^j \pi(x+y_i)\lambda_{y_i \to y_{i+1}}(x+y_i)=\prod_{i=1}^j\pi(x+y_{i+1}) \lambda_{y_{i+1} \to y_i}(x+y_{i+1}). 
   \end{equation}
 \enen
\end{definition}

The definition of a complex balanced distribution is owed to \cite{cappelletti:complex_balanced}, here we extend the definition by considering more general measures. Note that the terms  involving $\pi$ in \eqref{eq:cyc_bal_m} can be canceled if they are positive, hence the property of cycle balance is more related to $\supp(\pi)$ rather than $\pi$ itself. We summarize the different types of balanced equilibria and balanced measures in Table \ref{table:defs_bal}. 


 \begin{table}[h!] 
  \centering
   \begin{adjustbox}{max width=\textwidth}
 \begin{tabular}{|c||c|c|c|}
 \hline
 ~ & Deterministic setting & Stochastic setting & Holds $\forall$ \\
 \hline
 \hline
 \specialcell{Reaction \\ balance} & $\ds \lambda_{y \to y'}(c) = \lambda_{y' \to y}(c)$ \tablefootnote{Detailed Balance of Chemical Reaction Network Theory}  & $\ds \pi(x) \lambda_{y \to y'}(x) = \pi(x+y'-y) \lambda_{y' \to y}(x+y'-y)$ & $x, y, y'$ \\
 \hline
  \specialcell{Reaction \\ vector \\ balance} & $\ds  \sum_{y\to y'\in\R \,:\, y' -y = \xi} \lambda_{y \to y'}(c) = \sum_{y\to y'\in\R \,:\, y' -y = -\xi} \lambda_{y \to y'}(c)$  & $\ds \pi(x) \sum_{y\to y'\in\R \,:\, y' -y = \xi} \lambda_{y \to y'}(x) = \pi(x+\xi)\sum_{y\to y'\in\R \,:\, y' -y = -\xi} \lambda_{y \to y'}(x+\xi)$ \tablefootnote{Detailed Balance of Markov Chain Theory} & $x, \xi$ \\
 \hline
  \specialcell{Complex \\ balance} & $\ds \sum_{y' \in \C}\lambda_{y \to y'}(c) = \sum_{y' \in \C}\lambda_{y' \to y}(c)$  & $\ds \pi(x) \sum_{y' \in \C}  \lambda_{y \to y'}(x) = \sum_{y' \in \C} \pi(x+y'-y) \lambda_{y' \to y}(x+y'-y)$ & $x, y$ \\
  \hline
   \specialcell{Cycle \\ balance} & $\ds   \prod_{i=1}^j \lambda_{y_i \to y_{i+1}}(c)=\prod_{i=1}^j \lambda_{y_{i+1} \to y_i}(c)$  & $\ds  \prod_{i=1}^j \pi(x+y_i)\lambda_{y_i \to y_{i+1}}(x+ y_i) = \prod_{i=1}^j \pi(x+y_{i+1})\lambda_{y_{i+1} \to y_i}(x+y_{i+1})
$ & \specialcell{$x , (y_1, \ldots, y_j)$ with \\ $y_i$ distinct and $j\geq3$} \\
  \hline
  \specialcell{Stationary \\ measure} & ~  & $\ds \pi(x) \sum_{y\to y' \in \R}  \lambda_{y \to y'}(x) = \sum_{y\to y' \in \R} \pi(x+y - y') \lambda_{y \to y'}(x+y - y')$ & $x$ \\
  \hline
  \specialcell{Equilibrium} &
  $\ds \sum_{y \to y' \in \R} y \lambda_{y \to y'} (c)  = \sum_{y \to y' \in \R} y' \lambda_{y \to y'} (c) $ & ~  & ~\\
 \hline
 \end{tabular}
 \end{adjustbox}
 \caption{Summary of definitions of various balanced equilibria in deterministic setting and various balanced measures in corresponding stochastic setting.} \label{table:defs_bal}
 \end{table}


Note that part (a) of Definition \ref{def:balanced_measures} is the natural stochastic analog of part (a) of Definition \ref{def:balanced_states}, which is usually called detailed balance in reaction network theory. However, we could not refer to part (a) of Definition \ref{def:balanced_measures} as detailed balance, since ``detailed balance measure" is a reserved name in general Markov chain theory. In the case of reaction networks, the usual detailed balance of Markov chain theory coincides with reaction vector balance defined in part (c) of Definition \ref{def:balanced_measures}.

Note that if $x$ is an absorbing state, meaning that $\lambda_{y\to y'}(x)=0$ for all $y\to y'\in \R$, then any measure with support on $\{x\}$ is reaction balanced, reaction vector balanced, complex balanced and cycle balanced. Indeed, both sides of the equations in Definition \ref{def:balanced_measures} are zero.

We now define what it means for a stochastic reaction system to be graphically balanced, in the sense that the system is complex balanced, reaction balanced, reaction vector balanced or cycle balanced. 
\begin{definition} \label{def:stoch_balanced_system}
Let $(\G,\Lambda)$ be a stochastic reaction system. Suppose that $(\G,\Lambda)$ has at least one stationary distribution within an active irreducible component and every stationary distribution of $(\G,\Lambda)$ within an active irreducible component is complex balanced (or reaction balanced, or reaction vector balanced, or cycle balanced, resp.). Then we say that $(\G,\Lambda)$ is a {\em complex balanced (or reaction balanced, or reaction vector balanced, or cycle balanced, resp.)} reaction system. 
\end{definition}

\subsection{Results for arbitrary kinetics}

\begin{theorem}[Balanced measures are stationary] \label{thm:stoch_imp}
Let $(\G,\Lambda)$ be a stochastic reaction system. Suppose that $\pi$ is a measure that satisfies at least one of the following:
\been[(i)]
\item $\pi$ is a reaction balanced measure of $(\G,\Lambda)$.
\item $\pi$ is a complex balanced measure of $(\G,\Lambda)$.
\item $\pi$ is a reaction vector balanced measure of $(\G,\Lambda)$.
\enen
Then $\pi$ is a stationary measure of $(\G,\Lambda)$.
\end{theorem}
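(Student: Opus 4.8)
The plan is to reduce all three cases to a single chain of implications terminating in stationarity, mirroring the structure of the proof of Theorem~\ref{thm:bal_eq} in the deterministic setting. Concretely, I would establish that \textbf{(i)} a reaction balanced measure is complex balanced, and that \textbf{(ii)} a complex balanced measure is stationary, so that the first two cases are settled by the composite implication reaction balanced $\Rightarrow$ complex balanced $\Rightarrow$ stationary; the reaction vector balanced case \textbf{(iii)} I would then handle by a direct computation.

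For the first link I would sum the reaction balance identity \eqref{eq:rxnbal} over $y'\in\C$. Using the convention $\lambda_{y\to y'}=0$ when $y\to y'\notin\R$, the left-hand side becomes $\pi(x)\sum_{y'\in\C}\lambda_{y\to y'}(x)$ and the right-hand side becomes $\sum_{y'\in\C}\pi(x+y'-y)\lambda_{y'\to y}(x+y'-y)$, which is exactly the complex balance identity \eqref{eq:compbal_m}. For the second link I would sum \eqref{eq:compbal_m} over $y\in\C$. The left-hand side collapses to $\pi(x)\sum_{y\to y'\in\R}\lambda_{y\to y'}(x)$, and the one routine step is to recognize the resulting double sum $\sum_{y\in\C}\sum_{y'\in\C}\pi(x+y'-y)\lambda_{y'\to y}(x+y'-y)$ as a sum over all reactions: relabelling each ordered pair $(y,y')\mapsto(y',y)$ turns it into $\sum_{y\to y'\in\R}\pi(x+y-y')\lambda_{y\to y'}(x+y-y')$, which is precisely the right-hand side of the stationary measure equation.

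For the reaction vector balanced case I would work directly with the stationary equation, grouping reactions by their reaction vector. Writing $A_\xi(x)=\sum_{y\to y'\in\R\,:\,y'-y=\xi}\lambda_{y\to y'}(x)$, which is a finite sum vanishing for all but finitely many $\xi$, a reaction with reaction vector $\xi$ satisfies $x+y-y'=x-\xi$, so the stationary identity to be verified reads $\pi(x)\sum_{\xi}A_\xi(x)=\sum_{\xi}\pi(x-\xi)A_\xi(x-\xi)$. The crucial move is to apply the reaction vector balance identity \eqref{eq:rxnvecbal} at the shifted base point $x-\xi$, which gives $\pi(x-\xi)A_\xi(x-\xi)=\pi(x)A_{-\xi}(x)$ for every $\xi$. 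Substituting this into the right-hand side and reindexing via $\eta=-\xi$ yields $\sum_\xi\pi(x)A_{-\xi}(x)=\pi(x)\sum_\eta A_\eta(x)$, which is the desired left-hand side.

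I expect the only real subtlety to lie in the reaction vector balanced case, namely the discipline of applying \eqref{eq:rxnvecbal} at $x-\xi$ rather than at $x$, and the treatment of reaction vectors $\xi$ whose negative $-\xi$ is not itself a reaction vector. In the deterministic analog this asymmetry is handled by restricting the summation to directions $\xi$ with at least one of $\xi,-\xi$ a reaction vector; in the present argument it requires no case analysis, since $\xi\mapsto-\xi$ is a bijection of $\ZZ^n$ and every sum involved is finite, so $\sum_\xi A_{-\xi}(x)=\sum_\xi A_\xi(x)$ holds verbatim. The complex balanced and reaction balanced cases are then entirely mechanical.
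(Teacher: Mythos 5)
Your proof is correct and follows essentially the same approach as the paper's, which simply sums each balance identity (\eqref{eq:rxnbal} over reactions, \eqref{eq:compbal_m} over $y\in\C$, \eqref{eq:rxnvecbal} over reaction vectors $\xi$) to recover the stationary-measure equation. Routing case (i) through complex balance and applying \eqref{eq:rxnvecbal} at the shifted point $x-\xi$ rather than at $x$ are only cosmetic variations; your careful handling of vectors $\xi$ with $-\xi$ not a reaction vector is a welcome bit of extra rigor over the paper's one-line argument.
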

\begin{proof}
To see that a reaction balanced measure is stationary, sum \eqref{eq:rxnbal} over $y\to y' \in \R$. We get that a complex balanced measure is stationary by summing \eqref{eq:compbal_m} over $y \in \C$. To see that a reaction vector balanced measure is stationary, sum \eqref{eq:rxnvecbal} over all reaction vectors $\xi$.
\end{proof}

There is no result corresponding to the following in the deterministic setting. We say that a reaction system $(\G,\Lambda)$ is {\em non-explosive} if for every initial distribution, the resulting stochastic process is not explosive (in the sense of \cite{norris:markov}). 
\begin{proposition}\label{prop:nonexplosive}
 Let $(\G, \Lambda)$ be a stochastic reaction system with at least one active irreducible component. If there exists a cycle balanced stationary distribution $\pi$ with $\supp(\pi)=\ZZ^n$, then $(\G, \Lambda)$ is cycle balanced. Moreover, if $(\G,\Lambda)$ is non-explosive  and if there exists a complex balanced (or reaction balanced, or reaction vector balanced, resp.) distribution $\pi$ with $\supp(\pi)=\ZZ^n$, then $(\G, \Lambda)$ is complex balanced (or reaction balanced, or reaction vector balanced, resp.).
\end{proposition}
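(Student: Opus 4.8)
The plan is to treat the two assertions separately, exploiting that the cycle balance condition \eqref{eq:cyc_bal_m} depends only on $\supp(\pi)$ whereas the other three balance conditions are genuinely linear in the values of $\pi$. In both assertions $\pi$ is a stationary distribution: this is assumed outright in the cycle balanced case, and in the complex, reaction, and reaction vector balanced cases it follows from Theorem \ref{thm:stoch_imp}. To conclude that $(\G,\Lambda)$ is balanced in the sense of Definition \ref{def:stoch_balanced_system}, I must produce (a) at least one stationary distribution supported within an active irreducible component, and (b) show that \emph{every} stationary distribution supported within an active irreducible component inherits the relevant balance property. A structural fact used throughout is that, since $\supp(\pi)=\ZZ^n$ and every non-null stationary measure is supported on the union of the irreducible components, these components in fact partition $\ZZ^n$; in particular an active reaction leaving a state $x$ in a component $\Gamma$ stays in $\Gamma$ (closedness), and, conversely, an active reaction starting outside $\Gamma$ cannot land in $\Gamma$.

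For (a) I would fix an active irreducible component $\Gamma$ (one exists by hypothesis) and show that the normalized restriction $\pi|_\Gamma$ is a stationary distribution within $\Gamma$. By the partition property above, for $x\in\Gamma$ every inflow term $\pi(x+y-y')\lambda_{y\to y'}(x+y-y')$ in the stationary equation is nonzero only when $x+y-y'\in\Gamma$; hence the global balance equation for $\pi$, restricted to $\Gamma$, closes up among the states of $\Gamma$, and $\pi|_\Gamma$ (normalizable since $0<\pi(\Gamma)\le 1$) is stationary for the chain restricted to $\Gamma$. This establishes existence in all four cases.

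For the cycle balance assertion I would first observe that, because $\supp(\pi)=\ZZ^n$, the two cyclic products $\prod_{i=1}^j \pi(x+y_i)$ and $\prod_{i=1}^j \pi(x+y_{i+1})$ run over the same index set (as $y_{j+1}=y_1$) and are strictly positive, so \eqref{eq:cyc_bal_m} for $\pi$ is equivalent to the purely kinetic identity $\prod_{i=1}^j \lambda_{y_i\to y_{i+1}}(x+y_i)=\prod_{i=1}^j \lambda_{y_{i+1}\to y_i}(x+y_{i+1})$, valid for all $x$ and all admissible sequences. Now let $\mu$ be any stationary distribution within an active irreducible component. Checking \eqref{eq:cyc_bal_m} for $\mu$ splits into two cases according to whether all factors $\mu(x+y_i)$ are positive: if they are, they cancel cyclically and the identity reduces to the kinetic identity just established; if some $\mu(x+y_k)=0$, that factor appears on both sides by the same cyclic shift, so both products vanish. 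This requires no comparison between $\mu$ and $\pi$, which is precisely why non-explosivity is not needed here.

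For the complex, reaction, and reaction vector balanced assertions the balance equations cannot be reduced to conditions on the support alone, so I would instead compare $\mu$ directly with $\pi$, and this is where non-explosivity does the essential work. Restricting the non-explosive system to an active irreducible component $\Gamma'$ yields an irreducible, non-explosive chain possessing the stationary distribution $\pi|_{\Gamma'}$ from step (a); by the standard theory such a chain is positive recurrent with a \emph{unique} stationary distribution, so any stationary distribution $\mu$ within $\Gamma'$ satisfies $\mu=c\,\pi$ on $\Gamma'$ for some constant $c>0$. Each of the three remaining balance identities is a sum of terms, each linear in a single value of the measure, so after substituting $\mu=c\pi$ on $\Gamma'$ the constant $c$ factors out and the $\mu$-identity follows from the $\pi$-identity, \emph{provided the boundary terms match}: whenever a factor $\mu(x+y'-y)$ vanishes because $x+y'-y\notin\Gamma'$, the corresponding rate $\lambda_{y'\to y}(x+y'-y)$ must also vanish, for otherwise that active reaction would carry a state outside $\Gamma'$ into $\Gamma'$, contradicting the partition property. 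I expect the uniqueness step (correctly invoking non-explosivity for the restricted chain) together with this matching of terms straddling the boundary of the component to be the only real obstacle; the remaining verifications are routine.
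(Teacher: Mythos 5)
Your proposal is correct and follows essentially the same route as the paper: cancel the everywhere-positive $\pi$ to reduce cycle balance to a purely kinetic identity that any measure then satisfies, and for the other three properties use non-explosivity to get uniqueness (up to scaling) of the stationary distribution on each irreducible component and then exploit that the balance equations are preserved under multiplication by a constant. You merely spell out two details the paper leaves implicit -- that the components partition $\ZZ^n$ so the restriction of $\pi$ closes up, and that boundary terms straddling a component vanish because the corresponding rates do -- which is a welcome but not a different argument.
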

\begin{proof}
 If $\pi$ is cycle balanced and $\supp(\pi)=\ZZ^n$, we can cancel $\pi$ on the two sides of  \eqref{eq:cyc_bal_m} and we have that for every $x \in \ZZ^n$ and for every sequence of distinct complexes $(y_1, \ldots, y_j) \subseteq \C$ of $\G$ where $j \ge 3$
  \begin{equation*}
 \prod_{i=1}^j \lambda_{y_i \to y_{i+1}}(x+ y_i) = \prod_{i=1}^j \lambda_{y_{i+1} \to y_i}(x+y_{i+1}),
   \end{equation*}
 where as usual $y_{j+1} := y_1$ and $\lambda_{y \to y'} = 0$ if $y \to y' \notin \R$. Necessarily, every $\sigma$-finite measure (whether or not it is a stationary distribution) must be cycle balanced.
Finally, if $(\G, \Lambda)$ is non-explosive, then within every irreducible component the only stationary distribution is proportional to $\pi$ \cite{norris:markov}. The conclusion follows by noting that equations \eqref{eq:rxnbal}, \eqref{eq:compbal_m} and \eqref{eq:rxnvecbal} still hold after multiplication by a constant.
\end{proof}
\begin{remark}\label{rem:nonexplosive}
 The above proposition may also be stated by only assuming that $\supp(\pi)$ is the union of irreducible components, and at least one active irreducible component exists. Moreover, if the dynamics are restricted to $\ZZ^n_{\geq0}$ and all states outside the nonnegative orthant are considered absorbing (as for stochastic mass action kinetics), then the result can be even restricted to $\supp(\pi)\subseteq \ZZ^n_{\geq0}$. It is worth noting here that complex balanced (and therefore reaction balanced) stochastic mass action systems are necessarily non-explosive \cite{ACK:explosion}. 
\end{remark}

The following result is analogous to Theorem \ref{thm:det_rel} for graphically balanced states. The proof of part \eqref{part4} is similar to the one for the 
the deterministic setting presented in \cite{JM:balance}. 

\begin{theorem}[Relations between different balanced measures] \label{thm:stoch_rel} 
Let $(\G,\Lambda)$ be a stochastic reaction system. 
\been[(i)]
\item A reaction balanced measure of $(\G,\Lambda)$ is reaction vector balanced, complex balanced and cycle balanced. 
\item \label{part4} If $\pi$ is a complex balanced and cycle balanced measure of $(\G,\Lambda)$, then $\pi$ is reaction balanced. 
\enen
\end{theorem}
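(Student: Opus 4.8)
For part (i), the plan is to derive each of the three weaker balance conditions directly from the reaction balanced identity \eqref{eq:rxnbal}, mirroring the deterministic Theorem \ref{thm:det_rel}(i). To obtain reaction vector balance I would fix $x,\xi\in\ZZ^n$ and sum \eqref{eq:rxnbal} over all reactions $y\to y'\in\R$ with $y'-y=\xi$: the left-hand side factors as $\pi(x)\sum_{y\to y'\,:\,y'-y=\xi}\lambda_{y\to y'}(x)$, while on the right each term equals $\pi(x+\xi)\lambda_{y'\to y}(x+\xi)$, and reindexing by the reverse reaction $y'\to y$ (whose reaction vector is $-\xi$) turns the sum into $\pi(x+\xi)\sum_{y\to y'\,:\,y'-y=-\xi}\lambda_{y\to y'}(x+\xi)$, which is exactly \eqref{eq:rxnvecbal}. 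For complex balance I would fix $x$ and a complex $y$ and sum \eqref{eq:rxnbal} over $y'\in\C$, reproducing \eqref{eq:compbal_m} verbatim. For cycle balance the statement is in fact factor-by-factor: applying \eqref{eq:rxnbal} at the state $x+y_i$ to the reaction $y_i\to y_{i+1}$ gives $\pi(x+y_i)\lambda_{y_i\to y_{i+1}}(x+y_i)=\pi(x+y_{i+1})\lambda_{y_{i+1}\to y_i}(x+y_{i+1})$, so \eqref{eq:cyc_bal_m} follows immediately by taking the product over $i$.

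For part (ii), the key idea is a base-point substitution that converts the stochastic balance conditions into deterministic ones, so that the already-established Theorem \ref{thm:det_rel}(ii) can be invoked. Fix $x\in\ZZ^n$ and, for every ordered pair of complexes $y,y'\in\C$, set
\[
\phi_{y\to y'}:=\pi(x+y)\,\lambda_{y\to y'}(x+y),
\]
with $\lambda_{y\to y'}=0$ when $y\to y'\notin\R$. I would then verify that complex balance of $\pi$, evaluated at the state $x+y$, forces $\sum_{y'\in\C}\phi_{y\to y'}=\sum_{y'\in\C}\phi_{y'\to y}$ for each complex $y$, and that cycle balance of $\pi$ at base point $x$ is, after identifying each factor with a $\phi$, precisely $\prod_{i}\phi_{y_i\to y_{i+1}}=\prod_{i}\phi_{y_{i+1}\to y_i}$ for every sequence of distinct complexes of length $j\ge 3$.

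Having made this translation, I would introduce the deterministic reaction system $(\G,\Lambda')$ on the same network $\G$ whose kinetics assigns to each reaction $y\to y'$ the constant rate function $\lambda'_{y\to y'}\equiv\phi_{y\to y'}$. With constant rates, the two displayed identities above say exactly that every state is complex balanced and cycle balanced for $(\G,\Lambda')$; hence Theorem \ref{thm:det_rel}(ii) yields that every state is reaction balanced for $(\G,\Lambda')$, i.e. $\phi_{y\to y'}=\phi_{y'\to y}$ for all $y,y'$. Unwinding the definition of $\phi$, this reads $\pi(x+y)\lambda_{y\to y'}(x+y)=\pi(x+y')\lambda_{y'\to y}(x+y')$. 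Finally, given an arbitrary state $z$ and reaction $y\to y'$, I would apply this with base point $x=z-y$ to recover \eqref{eq:rxnbal}; since $z$ and $(y,y')$ are arbitrary, $\pi$ is reaction balanced.

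The routine part is the index bookkeeping in part (i) and the factor identification in part (ii); the only substantive step is recognizing the correct shift $x\mapsto x+y$ in the definition of $\phi_{y\to y'}$, which is exactly what aligns the stochastic complex and cycle balance equations with their deterministic counterparts. Once that alignment is in place, the genuinely nontrivial combinatorial content — that flow balance at every complex together with balance around every cycle of length at least three forces balance around every two-cycle — is supplied entirely by the deterministic result Theorem \ref{thm:det_rel}(ii) (equivalently \cite[Theorem 1.1]{dickenstein2011far}, in the spirit of \cite{JM:balance}), so no new graph-theoretic argument is needed.
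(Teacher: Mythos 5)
Your part (i) is correct and essentially identical to the paper's proof: sum \eqref{eq:rxnbal} over $y'\in\C$ for complex balance, over the pairs with fixed reaction vector for reaction vector balance (the terms coming from pairs whose forward direction is not a reaction vanish by \eqref{eq:rxnbal} itself, so the bookkeeping goes through), and take products of the factor-by-factor identities for cycle balance. For part (ii), however, you take a genuinely different route. The paper argues by contraposition and entirely within the stochastic setting: it defines the fluxes $\rho_{y,y'}(x) := \pi(x+y'-y)\lambda_{y'\to y}(x+y'-y) - \pi(x)\lambda_{y\to y'}(x)$, uses failure of reaction balance to find a strictly positive flux, uses complex balance to extend it step by step into a chain of positive fluxes, and by finiteness of $\C$ closes this into a cycle of length at least three along which the product inequality violates \eqref{eq:cyc_bal_m}. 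You instead fix a base point $x$, set $\phi_{y\to y'}=\pi(x+y)\lambda_{y\to y'}(x+y)$, check (correctly --- evaluating \eqref{eq:compbal_m} at $x+y$ and \eqref{eq:cyc_bal_m} at $x$ does give exactly the deterministic complex and cycle balance identities for the constants $\phi$) that the constant-rate deterministic system with rates $\phi_{y\to y'}$ is complex and cycle balanced at every state, and then invoke Theorem \ref{thm:det_rel}(ii); unwinding $\phi_{y\to y'}=\phi_{y'\to y}$ at $x=z-y$ recovers \eqref{eq:rxnbal}. This is valid under the paper's very permissive notion of kinetics (constant nonnegative rate functions are allowed), and it buys you a shorter proof that makes the deterministic--stochastic parallel explicit; what you give up is self-containedness, since the combinatorial content is now outsourced to Theorem \ref{thm:det_rel}(ii) and hence ultimately to \cite[Theorem 1.1]{dickenstein2011far}, whereas the paper's flux-chain argument proves the stochastic statement directly. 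The only caveat worth flagging is the degenerate possibility $\pi(x+y)=\infty$ (a general measure need not be $\sigma$-finite here), where $\phi_{y\to y'}$ may fail to be a real number; the paper's own proof is equally cavalier about this, so I do not count it as a gap.
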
 
\begin{proof}
Suppose that $\pi$ is a reaction balanced measure of $(\G,\Lambda)$. Then, summing \eqref{eq:rxnbal} over $y' \in \C$ gives \eqref{eq:compbal_m}, and summing \eqref{eq:rxnbal} over the reaction vectors gives \eqref{eq:rxnvecbal}. For every sequence $(y_1, \ldots, y_j)$, reaction balance of $\pi$ implies that $\pi(x+y_i)\lambda_{y_i \to y_{i+1}}(x+y_i) = \pi(x+y_{i+1})\lambda_{y_{i+1} \to y_{i}}(x+y_{i+1})$. Cycle balance of $\pi$ then follows from taking the product over $i \in \{1, \ldots, j\}$ on both sides of the identity.

To prove part \eqref{part4}, suppose that $\pi$ is a complex balanced but not reaction balanced measure of $(\G,\Lambda)$. We will show that $\pi$ is not cycle balanced. We use the convention that if $y \to y' \notin \R$, then $\lambda_{y \to y'}(x) = 0$ for all $x \in \ZZ^n$. Define the flux at $x \in \ZZ^n$ from the complex $y' \in \C$ to the complex $y \in \C$ to be 
\begin{equation*}
\rho_{y, y'}(x) := \pi(x+y'-y)\lambda_{y' \to y}(x + y'-y) - \pi(x) \lambda_{y \to y'}(x).
\end{equation*} 
Since $\pi$ is not reaction balanced, there exists an $x \in \ZZ^n$ and a pair of complexes $y_1, y_2 \in \C$ such that $\rho_{y_1, y_2}(x) \ne 0$. Clearly, $\rho_{y_1, y_2}(x) > 0$ if and only if $\rho_{y_2,y_1}(x+y_2-y_1)<0$. So we assume without loss of generality that there exists an $x \in \ZZ^n$ and a pair of complexes $y_1, y_2 \in \C$ such that $\rho_{y_1, y_2}(x) > 0$. Since $\pi$ is complex balanced (but not reaction balanced), there exists a complex $y_3 (\ne y_1) \in \C$ such that $\rho_{y_2,y_3}(x + y_2 - y_1) > 0$.  Continuing this argument, there exists a sequence of complexes $(y_1, y_2, y_3, \ldots)$ such that $\rho_{y_i,y_{i+1}}(x + y_i - y_1) > 0$ for all $i \ge 1$. However, since there are only finitely many complexes in a reaction network, eventually we get a nontrivial cycle, i.e. a state $x \in \ZZ^n$ and a sequence of distinct complexes  $\{y_1, y_2, \ldots, y_j\} \subseteq \C$ such that $\rho_{y_i, y_{i+1}}(x+ y_i - y_1) > 0$ where $i \in \{1,\ldots, j\}$ and $y_{j+1} = y_1$. This implies that for $i \in \{1,\ldots, j\}$, $\pi(x+y_{i+1} - y_1) \lambda_{y_{i+1} \to y_i} (x+y_{i+1} - y_1) > \pi(x+y_{i} - y_1) \lambda_{y_{i} \to y_{i+1}} (x+y_{i} - y_1)$. Taking product over $i \in \{1,\ldots, j\}$, we get that $\prod_{i=1}^j \pi(x+y_{i+1} - y_1)\lambda_{y_{i+1} \to y_i} (x+y_{i+1} - y_1) > \prod_{i=1}^j  \pi(x+y_i - y_1)\lambda_{y_{i} \to y_{i+1}} (x+y_{i} - y_1)$, which implies that $\pi$ is not cycle balanced, thus completing the proof of the claim. 
\end{proof}

\begin{remark} As in the deterministic setting (see Remark \ref{rem:no_implication_det}), reaction vector balance and cycle balance do not imply complex balance for a stochastic reaction system. To see this, consider the mass action system $(\G,K_S)$ depicted below:
\begin{equation*}
0 \stackrel[1]{1/2}{\rightleftarrows} A \quad, \quad 2A \stackrel[3]{1}{\rightleftarrows} 3A
\end{equation*}
Cycle balance is trivially satisfied, since there are no cycles involving three or more non-repeating complexes. 
The resulting stochastic process is a birth and death process with a unique stationary distribution within $\ZZ_{\ge 0}$, which is reaction vector balanced \cite{norris:markov}. The negative states, if considered, are all absorbing states. Hence, $(\G,K_S)$ is cycle balanced and reaction vector balanced. We only need to check that for $(\G,K_S)$, the stationary distribution within $\ZZ_{\ge 0}$ is not reaction balanced. In fact, the only measure on $\ZZ_{\ge 0}$ balancing the first pair of reversible reactions is $\pi(n) = (1/2)^n/n!$, which does not balance the second pair of reversible reactions. This shows that $(\G,K_S)$ is not reaction balanced.  

Surprisingly enough, and in contrast to deterministic mass action systems, in the setting of stochastic mass action systems, complex balance and reaction vector balance imply reaction balance  (see Theorem \ref{thm:rvb+cb=rb}).
\end{remark}

The following result extends part of \cite[Theorem 18]{cappelletti:complex_balanced} by only requiring measures in the hypotheses instead of distributions, and also extends to reaction balance and reaction vector balance as well. Moreover, the definition of kinetics we have in this paper is more general than that commonly used in the context of stochastic reaction networks. Furthermore, in \cite{cappelletti:complex_balanced}, it is assumed that $\lambda_{y\to y'}(x)>0$ if and only if $x\geq y$, which we do not require here.

\begin{theorem}[Necessary conditions for existence of a balanced measure] \label{thm:nec_conds_measure}
 Let $(\G,\Lambda)$ be a stochastic reaction system and let $\pi$ be a measure within an irreducible component $\Gamma$ of $(\G,\Lambda)$. Let $\G_\Gamma=(\SS_\Gamma, \C_\Gamma, \R_\Gamma)$ be the active subnetwork of $(\G,\Lambda)$ in $\Gamma$.
 \been[(i)]
 \item If $\pi$ is a reaction balanced measure of $(\G,\Lambda)$, then $\G_\Gamma$ is reversible. 
  \item If $\pi$ is a complex balanced measure of $(\G,\Lambda)$, then $\G_\Gamma$ is weakly reversible. 
   \item If $\pi$ is a reaction vector balanced measure of $(\G,\Lambda)$, then for every $y \to y' \in \R_\Gamma$, there exists $\widetilde y \to \widetilde y' \in \R_{\Gamma}$ with $y + \widetilde y  = y' + \widetilde y'$. 
 \enen
  \end{theorem}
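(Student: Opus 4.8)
The plan is to treat the three parts separately, in each case imitating the corresponding part of the deterministic Theorem \ref{thm:necessary}, with the measure $\pi$ playing the role that positivity of the state $c$ played there. A common preliminary underlies all three: I would first establish that $\pi(x)>0$ for every $x\in\Gamma$. Indeed, each of the three balance conditions makes $\pi$ a stationary measure by Theorem \ref{thm:stoch_imp}, and a stationary measure that is non-null on an irreducible component is strictly positive throughout it (see \cite{norris:markov}); since $\supp(\pi)$ is nonempty and contained in $\Gamma$, this gives $\supp(\pi)=\Gamma$. I would also record the elementary but crucial fact that $\Gamma$ is closed under active reactions: if $y\to y'\in\R$ is active at some $x\in\Gamma$, then $x+y'-y$ is accessible from $x$ and hence lies in $\Gamma$.

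For parts (i) and (iii) this is essentially all that is needed. Given $y\to y'\in\R_\Gamma$, pick $x\in\Gamma$ with $\lambda_{y\to y'}(x)>0$; since $\pi(x)>0$, the left-hand side of the relevant balance equation (\eqref{eq:rxnbal} for (i), \eqref{eq:rxnvecbal} with $\xi=y'-y$ for (iii)) is strictly positive, forcing the right-hand side to be positive. For (i) this yields $\lambda_{y'\to y}(x+y'-y)>0$ with $x+y'-y\in\Gamma$, so $y'\to y\in\R_\Gamma$ and $\G_\Gamma$ is reversible. For (iii) it produces a reaction $\widetilde y\to\widetilde y'$ with reaction vector $-\xi$ and $\lambda_{\widetilde y\to\widetilde y'}(x+\xi)>0$ at $x+\xi\in\Gamma$, hence $\widetilde y\to\widetilde y'\in\R_\Gamma$ and $y+\widetilde y=y'+\widetilde y'$. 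The case $\R_\Gamma=\emptyset$ is covered by the convention that the empty network is reversible.

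Part (ii) is the substantive one, and here I would again borrow the trick from the proof of Theorem \ref{thm:necessary}(ii), reducing to Horn's weak-reversibility result (Theorem 2B of \cite{horn1972necessary}). Fix $\bar y\to\bar y'\in\R_\Gamma$, active at some $x^*\in\Gamma$, and set $w:=x^*-\bar y$. The key step is to evaluate the complex balance equation \eqref{eq:compbal_m} only at the finitely many states of the form $w+y$, $y\in\C$: writing $J_{y\to y'}:=\pi(w+y)\lambda_{y\to y'}(w+y)\ge 0$, equation \eqref{eq:compbal_m} at $x=w+y$ reads exactly $\sum_{y'\in\C}J_{y\to y'}=\sum_{y'\in\C}J_{y'\to y}$ for every $y\in\C$, because the shift $x+y'-y=w+y'$ keeps us inside this finite family of states. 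Thus the nonnegative numbers $(J_{y\to y'})$ form a conservative flux on the finite reaction graph, which is precisely the statement that the reaction network consisting of the reactions with $J_{y\to y'}>0$, endowed with mass action kinetics and rate constants $J_{y\to y'}$, is complex balanced at the all-ones state. Since $J_{\bar y\to\bar y'}>0$, Theorem 2B of \cite{horn1972necessary} shows this subnetwork is weakly reversible, so $\bar y\to\bar y'$ lies on a directed cycle of positive-flux reactions; each such reaction is active at some $w+y\in\Gamma$ with $\pi(w+y)>0$ and hence belongs to $\R_\Gamma$. Letting $\bar y\to\bar y'$ range over $\R_\Gamma$ then yields weak reversibility of $\G_\Gamma$.

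The main obstacle is exactly the finiteness issue concealed in part (ii). The naive route, namely summing \eqref{eq:compbal_m} over all $x\in\ZZ^n$ to obtain a global flux $\sum_x\pi(x)\lambda_{y\to y'}(x)$, does produce a conservative flux on the reaction graph, but for a general $\sigma$-finite measure these sums may be infinite, and the cycle decomposition of circulations fails in the presence of infinite fluxes, since flux may ``escape to infinity''. Restricting to a single fiber $\{w+y:y\in\C\}$ is what I expect to be the crucial device: in the lifted (state, reactant-complex) picture the quantity $x-y$ is invariant along reaction steps, so this fiber is finite and carries a genuine finite circulation, permitting Horn's theorem to be applied verbatim. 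Checking that the positive-flux reactions obtained in this way exhaust $\R_\Gamma$, by varying $w$ over the active reactions, completes the argument.
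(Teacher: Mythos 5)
Your proof is correct and follows essentially the same route as the paper: the preliminary positivity of $\pi$ on all of $\Gamma$ via stationarity, the arguments for (i) and (iii), and the restriction in (ii) of the complex balance equation to the finite fiber $\{x-y+y'':y''\in\C\}$ all match the paper's proof. The only divergence is the final step of (ii), where the paper reads the resulting circulation as a stationary measure of a finite Markov chain on $\C_\Gamma$ and invokes recurrence of $y$, while you feed it into Horn's Theorem 2B via an auxiliary mass action system (the same device the paper uses for the deterministic Theorem~\ref{thm:necessary}(ii)); the two finishes are interchangeable.
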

\begin{proof}
If $\pi$ is either reaction balanced, complex balanced or reaction vector balanced, then by Theorem \ref{thm:stoch_imp}, $\pi$ is a stationary measure. It follows from the standard theory of Markov chains that $\pi$ is positive on every state of $\Gamma$. 
Consider $y \to y' \in \R_\Gamma$, and let $x \in \Gamma$ be such that $\lambda_{y \to y'}(x) >0$. If $\pi$ is reaction balanced, then from \eqref{eq:rxnbal}, it follows that $\lambda_{y' \to y}(x+y'-y) >0$, so that $y' \to y \in \R_\Gamma$. If $\pi$ is reaction vector balanced, then from \eqref{eq:rxnvecbal}, it follows that there is a $\widetilde y \to \widetilde y' \in \R_\Gamma$ such that $\widetilde y - \widetilde y' = y' - y$. 
Finally, if $\pi$ is complex balanced, we consider the continuous time Markov chain with state space $\C_\Gamma$ and transition rate from complex $y'$ to $y''$ to be $q(y',y'') :=\lambda_{y'\to y''}(x-y+y')$, where as usual the expression is $0$ if $y'\to y''\notin\R_\Gamma$. Since $\pi$ is complex balanced, $\nu(y''):=\pi(x-y+y'')$ is a stationary distribution for the contructed Markov chain. Since $\nu(y)=\pi(x)>0$, $y$ must be recurrent, which implies that $y\to y'$ is necessarily contained in a closed directed path $y\to y'\to \cdots \to y$, which concludes the proof. 
\end{proof}

\subsection{Results for mass action kinetics}

The main result in this section is the stochastic analog of the classical Horn, Jackson and Feinberg theory for deterministic mass action reaction systems. We start with a consequence of \cite[Corollary 19]{cappelletti:complex_balanced}. The original result \cite[Corollary 19]{cappelletti:complex_balanced} is restated in Section \ref{sec:bridge} of this paper as Theorem \ref{thm:complex-balance}.
\begin{theorem}\label{thm:one_to_many_cb_dist}
 Let $(\G,K_S)$ be a stochastic mass action system. If $(\G,K_S)$ possesses a complex balanced distribution within some active irreducible component of $(\G,K_S)$, then there exists a unique stationary distribution within every irreducible component of $(\G,K_S)$, and every stationary distribution of $(\G,K_S)$ is complex balanced.
\end{theorem}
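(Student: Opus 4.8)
The plan is to reduce the statement to the equivalence between stochastic and deterministic complex balance furnished by \cite[Corollary 19]{cappelletti:complex_balanced} (restated as Theorem \ref{thm:complex-balance}), and then to obtain uniqueness and the final claim via non-explosivity together with a locality (restriction) argument. First I would use Theorem \ref{thm:complex-balance}: the existence of a complex balanced distribution within some active irreducible component of $(\G,K_S)$ forces the corresponding deterministic mass action system $(\G,K_D)$ to be complex balanced, so by Theorem \ref{thm:bal} there is a positive complex balanced equilibrium $c \in \RR^n_{>0}$.

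By the Anderson--Craciun--Kurtz product-form result (Theorem \ref{thm:anderson}), the measure $\mu(x)=\prod_{i=1}^n c_i^{x_i}/x_i!$ is then a complex balanced measure of $(\G,K_S)$ on $\ZZ^n_{\geq 0}$, hence stationary by Theorem \ref{thm:stoch_imp}, and it is finite since $\mu(\ZZ^n_{\geq 0})=\prod_i e^{c_i}<\infty$. To produce a stationary distribution within an arbitrary irreducible component $\Gamma$ I would restrict $\mu$ to $\Gamma$. The key point is that \eqref{eq:compbal_m} is a \emph{local} condition under mass action: every summand carrying a nonzero rate requires $x\ge y$, so all states entering the equation at a point $x\in\Gamma$ lie in $\Gamma$ as well; thus $\mu|_\Gamma$ is again complex balanced and stationary, and, being the restriction of a finite measure, normalizes to a complex balanced distribution $\nu_\Gamma$. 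This handles active and non-active components uniformly, while the absorbing singletons outside $\ZZ^n_{\geq 0}$ are trivial since both sides of every balancing equation vanish there. Uniqueness within $\Gamma$ follows because a complex balanced stochastic mass action system is non-explosive \cite{ACK:explosion} (see Remark \ref{rem:nonexplosive}), so the irreducible restricted chain admits a single stationary distribution, as in Proposition \ref{prop:nonexplosive}; this also recovers the uniqueness already recorded for such systems in \cite{anderson:product-form}.

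For the last assertion I would recall that any stationary distribution $\nu$ of $(\G,K_S)$ is supported on the union of the irreducible components, so it decomposes as a mixture $\nu=\sum_\Gamma a_\Gamma\,\nu_\Gamma$ with $a_\Gamma\ge 0$; by the uniqueness just established each block $\nu|_\Gamma$ is proportional to the complex balanced $\nu_\Gamma$. Since \eqref{eq:compbal_m} at a state $x\in\Gamma$ involves only states of $\Gamma$, it holds for $\nu$ precisely because it holds for $\nu_\Gamma$, and therefore $\nu$ is complex balanced.

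The main obstacle I anticipate is verifying that restricting the global product-form measure to a single irreducible component genuinely preserves complex balance and stationarity — including for non-active components — which rests entirely on the locality of \eqref{eq:compbal_m} under mass action; coupled with this is the careful use of non-explosivity to exclude extra stationary distributions on infinite components. The remaining steps are then essentially bookkeeping built on Theorems \ref{thm:complex-balance}, \ref{thm:anderson} and \ref{thm:bal}.
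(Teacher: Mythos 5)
Your proposal is correct and follows essentially the same route as the paper, which offers no separate proof and simply presents the theorem as a consequence of \cite[Corollary 19]{cappelletti:complex_balanced} (restated as Theorem \ref{thm:complex-balance}), a result that already contains the existence, uniqueness and complex balance of the per-component stationary distributions; your only additions are the restriction/mixture bookkeeping for arbitrary stationary distributions. One small point: the locality of \eqref{eq:compbal_m} under mass action (that $x+y'-y\in\Gamma$ whenever the corresponding rate is nonzero at a state $x \in \Gamma$) is not a consequence of $x\ge y$ alone but also uses the weak reversibility of $\G$ guaranteed by Theorem \ref{thm:complex-balance} (equivalently, that $\ZZ^n_{\ge 0}$ is a union of irreducible components, as in \cite{craciun:dynamical}).
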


In the following result, we extend Theorem \ref{thm:one_to_many_cb_dist} by only requiring the hypothesis of a complex balanced measure instead of a complex balanced distribution. Moreover, we also consider reaction balanced measures, in analogy with the deterministic statement in Theorem \ref{thm:bal}.
\begin{theorem}[Existence and uniqueness of stationary distribution] \label{thm:stoch_bal}
Let $(\G,K_S)$ be a stochastic mass action system. If $(\G,K_S)$ possesses a $\sigma$-finite complex balanced (reaction balanced, resp.) measure within some active irreducible component of $(\G,K_S)$, then there exists a unique stationary distribution within every irreducible component of $(\G,K_S)$, and every stationary distribution of $(\G,K_S)$ is complex balanced (reaction balanced, resp.). 
\end{theorem}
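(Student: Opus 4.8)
The plan is to reduce the \textbf{complex balanced} case to the already-established Theorem~\ref{thm:one_to_many_cb_dist}, and to treat the \textbf{reaction balanced} case by leveraging the relation results of Theorem~\ref{thm:stoch_rel} together with the cycle-balance machinery. First I would dispense with the complex balanced case: suppose $\pi$ is a $\sigma$-finite complex balanced measure within an active irreducible component $\Gamma$. The only gap between this hypothesis and that of Theorem~\ref{thm:one_to_many_cb_dist} is that $\pi$ is a measure rather than a distribution. So the key step is to show that $\pi$ restricted to $\Gamma$ is in fact finite (hence normalizable to a distribution). By Theorem~\ref{thm:nec_conds_measure}(ii), $\G_\Gamma$ is weakly reversible, and the standard Markov chain argument (as in Theorem~\ref{thm:nec_conds_measure}) shows $\pi$ is positive on all of $\Gamma$; I would then invoke the explicit product-form structure of complex balanced measures for mass action systems — this is exactly where \cite{cappelletti:complex_balanced} (restated here as Theorem~\ref{thm:complex-balance}) enters — to conclude that on the irreducible component $\Gamma$, the measure $\pi$ agrees up to a constant with the known product-form (Poisson-like) stationary distribution, whose restriction to $\Gamma$ is summable. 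Normalizing gives a complex balanced distribution on $\Gamma$, and Theorem~\ref{thm:one_to_many_cb_dist} finishes the complex balanced half.

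For the \textbf{reaction balanced} case, I would proceed as follows. If $\pi$ is a $\sigma$-finite reaction balanced measure within the active component $\Gamma$, then by Theorem~\ref{thm:stoch_rel}(i) it is in particular complex balanced. Applying the complex balanced half just proved, there is a unique stationary distribution within every irreducible component, and every stationary distribution is complex balanced. It remains to upgrade ``complex balanced'' to ``reaction balanced'' for every stationary distribution. By Theorem~\ref{thm:stoch_rel}(ii), a measure that is both complex balanced and cycle balanced is reaction balanced, so it suffices to show that every stationary distribution is cycle balanced. Here I would exploit that cycle balance, per the remark following Definition~\ref{def:balanced_measures}, is essentially a property of the \emph{support} and of the rate constants rather than of $\pi$ itself: the original $\pi$ is reaction balanced, hence cycle balanced by Theorem~\ref{thm:stoch_rel}(i), and for stochastic mass action kinetics I expect the cancellation argument of Remark~\ref{rem:cyc_bal} (the deterministic analog) to transfer — the cycle-balance identity~\eqref{eq:cyc_bal_m} reduces, after factoring the mass action rates, to a relation among the rate constants $\prod \kappa_{y_i \to y_{i+1}} = \prod \kappa_{y_{i+1} \to y_i}$ that is independent of the particular measure. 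Once this rate-constant cycle condition is extracted from the existence of one reaction balanced (hence cycle balanced) measure, it forces \emph{every} $\sigma$-finite measure with full-on-its-support rates to be cycle balanced, and in particular every stationary distribution.

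Combining the two pieces: every stationary distribution is complex balanced (from the first half) and cycle balanced (from the rate-constant condition), so by Theorem~\ref{thm:stoch_rel}(ii) every stationary distribution is reaction balanced, and uniqueness within each irreducible component is inherited from the complex balanced case.

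The main obstacle I anticipate is the summability/finiteness step in the complex balanced case — showing that a merely $\sigma$-finite complex balanced measure on an active irreducible component must be finite and proportional to the canonical product-form distribution. For mass action this should follow from the explicit form of complex balanced stationary measures in \cite{cappelletti:complex_balanced}, but care is needed because the ambient state space here is $\ZZ^n$ rather than $\ZZ^n_{\ge 0}$ and the kinetics are slightly more general, so one must argue on the irreducible component $\Gamma$ alone, where the active subnetwork is weakly reversible and the Markov chain restricted to $\Gamma$ is irreducible and (by \cite{ACK:explosion}, cf.\ Remark~\ref{rem:nonexplosive}) non-explosive, which guarantees uniqueness of the stationary distribution up to scaling. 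A secondary subtlety is verifying that the cycle-balance rate-constant condition genuinely transfers from the single given measure to all measures in the mass action setting; the linear independence of the mass action monomials noted in Remark~\ref{rem:mass_action_no_indicator} is the tool I would use to strip off the $\pi$-dependent and $x$-dependent factors cleanly.
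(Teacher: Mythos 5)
Your reduction of the complex balanced case to Theorem~\ref{thm:one_to_many_cb_dist} correctly identifies the crux --- showing that a $\sigma$-finite complex balanced \emph{measure} on an active irreducible component is in fact finite --- but the justification you offer for that step does not work, and this is a genuine gap. You propose to ``invoke the explicit product-form structure of complex balanced measures'' from \cite{cappelletti:complex_balanced} (Theorem~\ref{thm:complex-balance}) to conclude that $\pi$ agrees up to a constant with the summable Poisson-like distribution. But Theorem~\ref{thm:complex-balance} takes as its hypothesis the existence of a complex balanced \emph{distribution} within an active irreducible component (equivalently, a positive complex balanced equilibrium of $(\G,K_D)$); neither is available to you at this stage --- obtaining them from a mere $\sigma$-finite measure is exactly what needs to be proved. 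The same circularity affects your appeal to non-explosivity via \cite{ACK:explosion}, which is a theorem about systems already known to be complex balanced; and even granting uniqueness of the stationary measure up to scaling on $\Gamma$, that does not yield finiteness (a null recurrent irreducible chain has a unique-up-to-scaling stationary measure of infinite mass). The paper closes this gap with Proposition~\ref{thm:exist_stat_dist}, whose proof is a substantive new construction: one introduces a fictitious species $S_y$ for each complex $y$ of the active subnetwork $\G_\Gamma$, forms the weakly reversible deficiency-zero network $\widehat\R=\{y+S_y\to y'+S_{y'}\}$, shows its irreducible components are finite, lifts $\mu$ to a stationary measure $\widehat\mu$ there, applies Theorem~\ref{thm:anderson} on each finite component to get $\widehat\mu(x',e_y)=M_L\,c^{x'}/x'!$, and then matches the constants $M_L$ across components to conclude $\mu(x')=Mc^{x'}/x'!$ on all of $\Gamma'$, hence $\mu(\Gamma)<\infty$. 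None of this can be replaced by a citation to the distribution-level results.

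Your treatment of the reaction balanced half is, by contrast, a legitimate alternative to the paper's argument and would work once the finiteness step is repaired. The paper proceeds directly: knowing $\pi_\Gamma(x)=M^c_\Gamma c^x/x!$, it substitutes into \eqref{eq:rxnbal} to extract $\kappa_{y\to y'}=\kappa_{y'\to y}c^{y'-y}$ for every $y\to y'\in\R$ (using activity of $\Gamma$ to find a state with $x\ge y$), and then verifies reaction balance of $\pi_{\Gamma'}$ on every other component. Your route --- extract the rate-constant cycle condition $\prod_i\kappa_{y_i\to y_{i+1}}=\prod_i\kappa_{y_{i+1}\to y_i}$ from cycle balance of the given measure, note that this condition is measure-independent under mass action (the factors $\prod_i\pi(x+y_i)$ and $\prod_i\pi(x+y_{i+1})$ coincide as cyclic permutations of one another, and the combinatorial factors $(x+y_i)!/x!$ likewise cancel), and then apply Theorem~\ref{thm:stoch_rel}(ii) to each stationary distribution --- mirrors the paper's own proof of the deterministic Theorem~\ref{thm:bal} and is conceptually cleaner, at the cost of having to check that every cycle of reactions in $\R_\Gamma$ admits a state $x$ with $x+y_i\in\Gamma$ for all $i$ (which follows from activity and weak reversibility of $\G_\Gamma$, as in the proof of Theorem~\ref{thm:bridge}(3)).
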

Theorem \ref{thm:stoch_bal} will be proved after stating and proving Proposition \ref{thm:exist_stat_dist}. 

\begin{remark}
Theorem \ref{thm:stoch_bal} implies that a stochastic mass action system which possesses a $\sigma$-finite, complex balanced (reaction balanced, resp.) measure within some active irreducible component is a complex balanced (reaction balanced, resp.) reaction system. 
\end{remark}

\begin{remark}\label{rem:no_one_to_many}
In general, a statement analogous to Theorem \ref{thm:stoch_bal} does not hold when reaction balance or complex balance is replaced by reaction vector balance. 
\been
\item Consider the mass action system
\begin{equation*}
0 \stackrel[1]{1}{\rightleftarrows} A \quad, \quad A+B \stackrel[]{1/2}{\to} 2A+B
\end{equation*}
The active irreducible components are given by $x_B=l$ where $l\in\ZZ_{\geq0}$. Within every such component, the associated process is a birth and death process with a unique stationary measure (up to multiplication by contants). Moreover, the process is positive recurrent only for $l=1$ \cite{norris:markov}. It follows that a reaction vector balanced stationary distribution within an active irreducible component exists, but no stationary distribution exists for $l\geq 2$.
\item Even in cases where a stationary distribution exists within every irreducible component, the stationary distribution may be reaction vector balanced in some irreducible components and not in others. Consider
\begin{equation*}
  \begin{tikzpicture}[baseline={(current bounding box.center)}]
   \node[state] (1)  at (0,0)  {$0$};
   \node[state] (2)  at (3,0)  {$A$};
   \node[state] (3)  at (6,0)  {$3A$};
    \node[state] (4)  at (3,3)  {$2A$};
     \node[state] (5)  at (6,3)  {$4A$};
      \node[state] (6)  at (7,1.5)  {$2A+B$};
     \node[state] (7)  at (10,1.5)  {$4A+B$};
   \path[->]
    (1) edge[red,bend left] node {$1$} (2)
    	edge[bend left] node {$2$} (4)
    (2) edge[red,bend left] node {$1$} (1)
         edge[bend left] node {$4$} (3)
         edge[red,bend left] node {$1$} (4)
    (3) edge[bend left] node {$12$} (2)
    (4) edge[red,bend left] node {$2$} (2)
          edge[bend left] node {$3$} (1)
    (4) edge[] node {$1$} (5)
    (7) edge[] node {$4$} (6);
  \end{tikzpicture}
 \end{equation*}
The irreducible components correspond to $x_B=l$ with $l\in\ZZ_{\geq0}$ is a nonnegative integer. It can be checked that a stationary distribution exists within every irreducible component. The reactions corresponding to the edges in red have reaction vector $\pm(1,0)$ while the reactions corresponding to the edges in black have reaction vector $\pm(2,0)$. We denote by $q_l(i,j)$ the transition rate from the state $(i,l)$ to $(j,l)$. We have
$$\begin{array}{ll}
 q_l(i,i+1)=i+1 & q_l(i,i+2)=(i+1)(i+2)\\
 q_l(i,i-1)=i(2i-1) & q_l(i,i-2)=i(i-1)[4i^2l+2i(6-10l)+24l-21]\\
\end{array}$$ 
 and $q_l(i,j)=0$ otherwise. If a reaction vector balanced measure $\pi$ exists within an irreducible component, then it must be a stationary measure for the subnetwork determined by the red reactions. The only possibility is that
 \begin{equation*}
 \pi(i,l) = M_l\prod_{j=0}^{i-1}\frac{1}{2j+1},
 \end{equation*}
 where $M_l$ is a normalization constant depending on the irreducible component $x_B=l$. It can be checked that $\pi$ is a stationary measure only for $l=1$. It follows that the stationary distribution within $x_B=1$ is reaction vector balanced, and the stationary distributions within other irreducible components are not.
\enen
\end{remark}

\begin{proposition}[Existence of stationary distribution] \label{thm:exist_stat_dist}
Let $(\G,K_S)$ be a stochastic mass action system and let $\pi$ be a $\sigma$-finite complex balanced measure within an irreducible component of $(\G,K_S)$. Then $\pi$ is a finite measure (and so is a stationary distribution, up to a normalization constant). 
\end{proposition}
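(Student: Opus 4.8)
The plan is to prove that, on its support, $\pi$ has the product-form Poisson shape $\pi(x) = M\,c^x/x!$ for some $c \in \RR^n_{>0}$, and then to deduce finiteness by a one-line summation. First I would collect the structural consequences that hold for free. By Theorem \ref{thm:stoch_imp} the complex balanced measure $\pi$ is stationary, so standard Markov chain theory gives that $\pi$ is strictly positive on every state of the irreducible component $\Gamma := \supp(\pi)$ and vanishes off $\Gamma$, and by Theorem \ref{thm:nec_conds_measure}(ii) the active subnetwork $\G_\Gamma$ is weakly reversible. Since only reactions active in $\Gamma$ enter the balance equations once these are restricted to $\Gamma$, I may work with $\G_\Gamma$ in place of $\G$; I would also note that an active irreducible component of a mass action system is contained in $\ZZ^n_{\ge 0}$ (any state with a negative coordinate is absorbing), which is precisely what makes the final summation converge.

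Next I would insert the stochastic mass action rates $\lambda_{y\to y'}(x) = \kappa_{y\to y'}\,x!/(x-y)!$ into the complex balance identity \eqref{eq:compbal_m}. Both sides carry the indicator $\one_{\{x \ge y\}}$, and after cancelling the common factor $(x-y)!$ the identity for the auxiliary function $g(x) := \pi(x)\,x!$ becomes the homogeneous relation
\begin{equation*}
g(x)\sum_{y'\in\C}\kappa_{y\to y'} = \sum_{y'\in\C} g(x+y'-y)\,\kappa_{y'\to y},
\end{equation*}
valid for every complex $y$ and every $x \in \Gamma$ with $x \ge y$. This is exactly the deterministic complex balance condition read as a functional equation for $g$. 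The crux of the argument --- and the step I expect to be the main obstacle --- is to deduce from this relation, using weak reversibility and the irreducibility of $\Gamma$, that $g$ is log-linear along the stoichiometric directions, i.e. that there is $c \in \RR^n_{>0}$ with $g(x+v)/g(x) = c^v$ for every reaction vector $v$, equivalently $g(x) = M\,c^x$ and $\pi(x) = M\,c^x/x!$ on $\Gamma$. This is precisely the product-form computation underlying the Anderson--Craciun--Kurtz and Cappelletti--Wiuf results \cite{anderson:product-form, cappelletti:complex_balanced}; I would reproduce it taking care that every step is pointwise and never invokes the normalization $\pi(\ZZ^n) = 1$, so that it remains valid for the merely $\sigma$-finite measure at hand. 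The same computation simultaneously exhibits $c$ as a positive complex balanced equilibrium of the deterministic mass action system $(\G_\Gamma, K_D)$.

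With the product form established, finiteness is immediate: since $\Gamma \subseteq \ZZ^n_{\ge 0}$ and $c > 0$,
\begin{equation*}
\pi(\ZZ^n) = \sum_{x \in \Gamma} M\,\frac{c^x}{x!} \le M \sum_{x \in \ZZ^n_{\ge 0}} \frac{c^x}{x!} = M \prod_{i=1}^n e^{c_i} = M\,e^{\norm{c}_1} < \infty,
\end{equation*}
so $\pi$ is a finite measure and $\pi/\pi(\ZZ^n)$ is a stationary distribution. An equivalent way to package the last two steps, which avoids re-deriving log-linearity by hand, is to first produce the positive complex balanced equilibrium $c$ of $(\G_\Gamma, K_D)$ and then observe that $\tilde\pi(x) := c^x/x!$ is itself a finite complex balanced, hence stationary, measure on $\Gamma$; because $\tilde\pi$ is summable the chain on the irreducible set $\Gamma$ is positive recurrent, so its stationary measure is unique up to a scalar, forcing $\pi = M\tilde\pi$ and hence $\pi$ finite. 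Either route reduces the proposition to the single genuinely substantive point, namely the product-form/log-linearity statement for $\sigma$-finite complex balanced measures.
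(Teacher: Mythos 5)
You have correctly identified where the difficulty lies, but you have not resolved it: the assertion that a merely $\sigma$-finite complex balanced measure must have the product form $\pi(x)=M\,c^x/x!$ is exactly the substantive content of the proposition, and neither of your two routes actually delivers it. The Anderson--Craciun--Kurtz computation you propose to ``reproduce pointwise'' runs in the opposite direction: given a positive complex balanced equilibrium $c$ of the deterministic system, it verifies that $c^x/x!$ satisfies \eqref{eq:compbal_m}. It does not show that an arbitrary solution $g$ of your functional equation is log-linear, and that converse is not a pointwise rewriting of the same algebra --- it is a uniqueness statement about solutions of a system of linear recursions on $\Gamma$, which fails to be automatic because you do not know a priori that the chain restricted to $\Gamma$ is recurrent (for a transient irreducible chain, stationary measures need not be unique up to a scalar, so even exhibiting one product-form solution would not force $\pi$ to equal it). Your second packaging is circular for a related reason: to ``first produce the positive complex balanced equilibrium $c$ of $(\G_\Gamma,K_D)$'' you would invoke Theorem \ref{thm:complex-balance}, but that theorem requires a complex balanced \emph{distribution} within an active irreducible component, i.e.\ precisely the finiteness that Proposition \ref{thm:exist_stat_dist} is meant to establish. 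Weak reversibility of $\G_\Gamma$ alone does not guarantee deterministic complex balancedness unless the deficiency is zero.

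The paper closes this gap with a construction you do not have: it lifts $\G_\Gamma$ to a network $\widehat\G$ with reactions $y+S_y\to y'+S_{y'}$, one fictitious species per complex. This lifted network is weakly reversible and has deficiency zero, so $(\widehat\G,K_D)$ is complex balanced for \emph{every} choice of rate constants (no circularity), and --- crucially --- every irreducible component of $(\widehat\G,K_S)$ is \emph{finite}, because from $(x',e_y)$ only states of the form $(x'-y+y'',e_{y''})$ are reachable. On a finite irreducible component the lifted measure $\widehat\mu(x',e_y):=\mu'(x')$, which is stationary by the complex balance of $\mu'$, is automatically finite and unique up to a scalar, so Theorem \ref{thm:anderson} pins it down as $M_L\,c^{x'}/x'!$; an accessibility argument then shows the constants $M_L$ agree across components, recovering the product form on all of $\Gamma$ and hence the finiteness you want. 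If you want a complete proof you either need to carry out this lifting (or an equivalent device) or supply an independent argument that the only $\sigma$-finite solutions of your functional equation for $g$ on an irreducible component are log-linear; as written, your proposal reduces the proposition to an unproved claim.
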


\begin{proof}
Let $\mu$ be a $\sigma$-finite complex balanced measure within the irreducible component $\Gamma$ of the mass action system $(\G,K_S)$. By Theorem \ref{thm:stoch_imp}, $\mu$ is a stationary measure, and therefore from standard theory of continuous time Markov chains, $\mu$ is positive on all states of $\Gamma$. 

If $\Gamma$ is a singleton containing an absorbing state $x\notin\ZZ^n_{\geq0}$, then the result follows. So for the rest of the proof we focus on the case $\Gamma\subseteq\ZZ^n_{\geq0}$.

Now, let $\G_\Gamma=(\SS_\Gamma, \C_\Gamma, \R_\Gamma)$ denote the active subnetwork of $(\G, K_S)$ in $\Gamma$. By Theorem \ref{thm:nec_conds_measure}, the network $\G_\Gamma$ is weakly reversible. Denote by $(\G_\Gamma, K_S)$ the mass action system whose rate constants are naturally determined by those of $(\G, K_S)$, and let $n_\Gamma$ be the cardinality of $\SS_\Gamma$.

If $n_\Gamma<n$, then necessarily the species in $\SS\setminus\SS_\Gamma$ have constant counts on $\Gamma$: indeed if the counts of a species $S$ are not constant on $\Gamma$, then there must be a reaction $y\to y'\in\R_\Gamma$ with $y'-y$ having a non-zero entry for $S$, which implies $S\in\SS_\Gamma$. Hence, if $n_\Gamma<n$, after potentially reordering the species, $\Gamma=\Gamma'\times\{q\}$ with $\Gamma'\subseteq \ZZ^{n_d}$ being an active irreducible component of $\G_\Gamma$ and $q\in\ZZ^{n-n_d}$. Finally, we have that the measure on $\Gamma'$ defined by $\mu'(x')=\mu(x',q)$ is a complex balanced measure of $(\G_\Gamma,K_S)$. Note that the latter holds even if $n_\Gamma=n$, in which case $\mu'=\mu$.

Now introduce a set of fictitious species $\{S_y\}_{y\in\C_\Gamma}$, one fictitious species for one complex of $\G_\Gamma$, and consider the mass action system, denoted by $(\widehat \G, K_S)$ and described by the set of reactions
 \begin{equation*}
 \widehat{\R} := \{ y+S_y\to y'+S_{y'} : y\to y'\in\R_\Gamma \} ,
 \end{equation*}
The resulting reaction network $\widehat \G$ is clearly weakly reversible since the reactions $\widehat \R$ are in one-to-one correspondence with the reactions $\R_\Gamma$. By using this correspondence, we equip $\widehat \G$ with mass action kinetics, with the same rate constants as $(\G_\Gamma, K_S)$ (that correspond to a subset of those of $(\G, K_S)$). Furthermore, $\widehat{\G}=(\widehat{\SS},\widehat{\C}, \widehat{\R})$ has \emph{deficiency} zero. (See the proof of \cite[Theorem 18]{cappelletti:complex_balanced}. In the same paper, a definition of deficiency is given, together with a discussion on its meaning.) Since the reaction network $\widehat{\G}$ has deficiency zero and is weakly reversible, from \cite{feinberg1972}, it follows that the network $(\widehat{\G},K_D)$ is complex balanced for every choice of positive reaction rate constants. 

Let $m_\Gamma$ be the number of complexes of $\G_\Gamma$ (equivalently of $\widehat{\G}$) and let $(x',\widehat{x}) \in\ZZ_{\ge 0}^{n_\Gamma}\times\ZZ_{\ge 0}^{m_\Gamma}$ denote a state of $(\widehat \G,K_S)$, where the entries of $x'$ and $\widehat{x}$ refer to the original and fictitious species, respectively. Let $e_y$ denote the vector of $\ZZ_{\ge 0}^{m_\Gamma}$ whose entry relative to the complex $y$ is 1, and whose other entries are zero. Consider the set
  $\widehat{\Gamma}=\Gamma'\times \{e_y\}_{y\in\C_\Gamma}$. Since $\Gamma'$ is an irreducible component of $(\G_\Gamma, K_S)$, it follows that  $\widehat{\Gamma}$ 
is a closed set for the reaction network $(\widehat{\G}, K_S)$, in the sense that no state outside $\widehat{\Gamma}$ is accessible from within $\widehat{\Gamma}$. 
Furthermore, from the state $(x',e_y)$ only the states $(x'-y+y', e_{y'})$ are accessible, provided that $x'\geq y$ and that there is a path from $y$ to $y'$ in the reaction graph of $\G_\Gamma$, which is equivalent to the existence of a path from $y+S_y$ to $y'+S_{y'}$ in the reaction graph of $\widehat{\G}$. Since finitely many states are accessible from any given state, every irreducible component of $(\widehat \G, K_S)$ has finitely many states. Finally, since $\widehat{\G}$ is weakly reversible, $(x'',e_{y'})$ is accessible from $(x',e_y)$ if and only if $(x',e_y)$ is accessible from $(x'',e_{y'})$. Thus, we have shown that $\widehat \Gamma$ is a union of irreducible components of $(\widehat \G,K_S)$, each of which has finitely many states.

  Now define a measure $\widehat{\mu}$ on $\widehat{\Gamma}$ by
\begin{equation*}
\widehat{\mu}(x',e_y)=\mu'(x')\text{ for all }(x',e_y)\in\widehat{\Gamma}.
\end{equation*}
Clearly, $\widehat \mu$ is positive on every state of $\widehat \Gamma$. 
We now show that $\widehat{\mu}$ is a non-null stationary measure. Noting that $\lambda_{y'+S_{y'} \to y''+S_{y''}}(x',e_y)$ is nonzero and equal to the rate $\lambda_{y \to y''}(x')$ of $(\G_\Gamma,K_S)$ if and only if $y' = y$, for every $(x',e_y)$ in $\widehat{\Gamma}$ we have
  \begin{align*}
   \sum_{y''\in\widehat{\C}} \sum_{y'\in\widehat{\C}}\widehat{\mu}(x',e_y)\lambda_{y'+S_{y'} \to y''+S_{y''}}(x',e_y)&=\sum_{y''\in\widetilde\C}\mu'(x')\lambda_{y\to y''}(x')\\
   &=\sum_{y''\in\widetilde\C}\mu'(x'+y''-y)\lambda_{y''\to y}(x'+y''-y)\\
   &=\sum_{y''\in\widehat{\C}} \sum_{y'\in\widehat{\C}} \widehat{\mu}(x'+y''-y',e_{y+y''-y'})\lambda_{y''+S_{y''}\to y'+S_{y'}}(x'+y''-y',e_{y+y''-y'})
  \end{align*}
  where in the second equality, we used that $\mu'$ is a complex balanced measure on $\Gamma'$. This shows that $\widehat \mu$ is stationary. 

Let $L$ be an irreducible component within $\widehat{\Gamma}$. Since $\widehat \mu$ is a non-null, $\sigma$-finite stationary measure on $L$ which has only a finite number of states, it follows that $\widehat \mu$ is finite on $L$, and therefore $\widehat \mu$ is a constant multiple of the stationary distribution on $L$. As noted earlier in the proof, every deterministic mass action system defined on the network $\widehat \G$ is complex balanced. Let $c$ be a positive complex balanced equilibrium of the mass action system $(\widehat \G,K_D)$. Hence, by \cite[Theorem 4.1]{anderson:product-form} (stated here as Theorem \ref{thm:anderson}) we have that within every irreducible component $L\subset \widehat{\Gamma}$
\begin{equation*}
\widehat{\mu}(x',e_y)=M_L\frac{c^{x'}}{{x'}!}\text{ for all }(x',e_y)\in L,
\end{equation*}
for some $M_L >0$. 

We now show that $M_L$ does not depend on the irreducible component $L$. First, it follows from $\widehat{\mu}(x',e_y)=\mu'(x')$ that $M_L=\mu(x'){x'}!/c^{x'}=:f(x')$. To show that $M_{L_1}=M_{L_2}$ for two different irreducible components $L_1, L_2 \subset \widehat \Gamma$, it suffices to show that $f(x'_1)=f(x'_2)$ for $x'_1$ and $x'_2$ such that $(x'_1,e_y) \in L$ and $(x'_2,e_{y'}) \in L'$, for some $y,y'\in\C_\Gamma$. Since $x'_1, x'_2 \in \Gamma'$, $x'_2$ is accessible from $x'_1$, i.e. there is a finite sequence of reactions that takes $x'_1$ to $x'_2$. Therefore, to argue by induction, it is sufficient to show that $f(x'_1)=f(x'_2)$ assuming that $x'_2=x'_1-y+y''$ and $\lambda_{y\to y''}(x'_1)>0$. Under the last assumption $(x'_1,e_y)$ and $(x'_2,e_{y''})$ are in the same irreducible component $L$, so necessarily $f(x'_1)=M_L=f(x'_2)$. 
Thus, we have that for every $x' \in \Gamma'$, $\mu'(x')=Mc^{x'}/{x'}!$ and therefore, 
\begin{equation*}
\sum_{x\in\Gamma}\mu(x)=\sum_{x'\in\Gamma'}\mu'(x')=M\sum_{x'\in\Gamma'}\frac{c^{x'}}{{x'}!}\leq M\sum_{x'\in\ZZ^{n_\Gamma}_{\ge 0}}\frac{c^{x'}}{{x'}!}=M\prod_{i=1}^{n_\Gamma} e^{c_i}<\infty.
\end{equation*}
Thus $\mu$ is finite, which concludes the proof. 
\end{proof}

\begin{remark}\label{rem:rvb_measure_not_distr}
Since a reaction balanced measure is complex balanced, a similar result as Proposition \ref{thm:exist_stat_dist} can be stated for a reaction balanced measure. Unlike the case of reaction balance and complex balance, a $\sigma$-finite reaction vector balanced measure within an irreducible component is not necessarily finite, as shown in Remark \ref{rem:no_one_to_many}(1).
\end{remark}

We are now ready to prove Theorem \ref{thm:stoch_bal}.
\begin{proof}[Proof of Theorem \ref{thm:stoch_bal}]
 Suppose that $\mu$ is a $\sigma$-finite complex balanced measure within $\Gamma$, an active irreducible component of $(\G,K_S)$. By Proposition \ref{thm:exist_stat_dist}, $\mu(\Gamma)< \infty$, and therefore $\mu$ is a stationary distribution, up to a positive multiplicative constant. 
The claim is then proved by Theorem \ref{thm:one_to_many_cb_dist}.

Suppose now that $\mu$ is a $\sigma$-finite reaction balanced measure within $\Gamma$, an active irreducible component of $(\G,K_S)$. Since a reaction balanced measure is complex balanced, we have already shown that there exists a unique stationary distribution within every irreducible component $\Gamma'$ of $(\G,K_S)$,
and by \cite[Corollary 19]{cappelletti:complex_balanced} (stated here as Theorem \ref{thm:complex-balance}) they are of the form $\pi_{\Gamma'}(x)=M^{c}_{\Gamma'} c^x/x!$, where $c>0$ and $M^{c}_{\Gamma'}>0$. In particular, $\pi_{\Gamma}=\mu$ is a reaction balanced measure, hence by plugging $\pi_{\Gamma}(x)=M^{c}_{\Gamma} c^x/x!$ and the mass action reaction rates in \eqref{eq:rxnbal} and by simplifying, we obtain
\begin{equation*}
\left( \kappa_{y \to y'} - \kappa_{y' \to y} c^{y'-y} \right) \one_{\{x \ge y\}} = 0
\end{equation*}
for all $x \in \Gamma$ and every $y\to y' \in \R$. Since $\Gamma$ is an active irreducible component, every reaction $y \to y' \in \R$ is active at some state $x$, i.e.\ for every $y \to y' \in \R$ there is a state $x \in \Gamma$ such that $x \ge y$. Therefore, $\kappa_{y \to y'} = \kappa_{y' \to y} c^{y'-y}$ for every $y \to y' \in \R$. It follows that for every irreducible component $\Gamma'$, for all $x \in \Gamma'$ and all $y\to y' \in \R$, we have $\pi_{\Gamma'}(x)\lambda_{y\to y'}(x)=\pi_{\Gamma'}(x+y'-y)\lambda_{y\to y'}(x+y'-y)$, which concludes the proof.
\end{proof}

The following results explore sufficent conditions for reaction balance, given in terms of complex balance and reaction vector balance.

\begin{theorem} \label{thm:rvb+cb=rb}
Suppose that $\pi$ is a $\sigma$-finite complex balanced measure of a stochastic mass action system $(\G,K_S)$. Assume that $\supp(\pi)$ includes a subset $A\subseteq\ZZ_{\geq 0}^n$ 
such that no nonzero polynomial of degree at most $\max_{y\to y'\in\R}\|y\|_1$ vanishes on $A$. Furthermore, suppose that \eqref{eq:rxnvecbal} holds for $\xi \in \RR^n$ and $x \in A$. 
Then, $(\G,K_S)$ is reaction balanced.
\end{theorem}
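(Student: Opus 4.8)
The plan is to reduce the statement to a polynomial identity in the state variable and then exploit the linear independence of the mass-action polynomials. First I would use that $\pi$ is $\sigma$-finite and complex balanced: by Proposition~\ref{thm:exist_stat_dist} together with Theorem~\ref{thm:complex-balance} (equivalently Theorem~\ref{thm:one_to_many_cb_dist}), $\pi$ is, up to normalization, the unique stationary distribution on its irreducible component $\Gamma\subseteq\ZZ^n_{\ge 0}$ and has the product form $\pi(x)=Mc^x/x!$ for some $c>0$ that is a complex balanced equilibrium of the deterministic system $(\G,K_D)$. Writing $D=\max_{y\to y'\in\R}\|y\|_1$ and $p_w(x)=\prod_{i=1}^n x_i(x_i-1)\cdots(x_i-w_i+1)$ for the degree-$\|w\|_1$ mass-action polynomial of Remark~\ref{rem:mass_action_no_indicator}, the goal becomes showing that $c$ is a \emph{reaction balanced} equilibrium, i.e.\ $\kappa_{y\to y'}c^y=\kappa_{y'\to y}c^{y'}$ for every $y\to y'\in\R$ together with reversibility. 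Once this is known, substituting the product form back into \eqref{eq:rxnbal} shows $\pi$ is a reaction balanced measure; since every stationary distribution of $(\G,K_S)$ shares the same $c$ by Theorem~\ref{thm:stoch_bal}, Definition~\ref{def:stoch_balanced_system} then yields that $(\G,K_S)$ is reaction balanced.

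Next I would record two structural consequences of the richness hypothesis on $A$. For each reactant complex $y$, the polynomial $p_y$ is nonzero of degree $\|y\|_1\le D$, so it cannot vanish on $A$; hence some $x\in A$ satisfies $x\ge y$, meaning every reaction of $\R$ is active at a point of $\Gamma$. Therefore $\G_\Gamma=\G$, and by Theorem~\ref{thm:nec_conds_measure}(ii) the network $\G$ is weakly reversible. In a weakly reversible network every complex is simultaneously a reactant and a product complex, so $\|w\|_1\le D$ for \emph{every} complex $w$; this uniform degree bound is exactly what will later let me invoke the hypothesis on $A$.

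The heart of the argument is to turn \eqref{eq:rxnvecbal} into a polynomial identity. Fixing $\xi$ and substituting $\pi(x)=Mc^x/x!$ into \eqref{eq:rxnvecbal}, I would cancel the common factor $Mc^x/x!$ and multiply through by $x!$ to obtain, for $x\in A$ with $x+\xi\in\Gamma$,
\[
P_\xi(x):=\sum_{y\,:\,y\to y+\xi\in\R}\kappa_{y\to y+\xi}\,p_y(x)=c^{\xi}\sum_{z\,:\,z+\xi\to z\in\R}\kappa_{z+\xi\to z}\,p_z(x)=:Q_\xi(x).
\]
The delicate point is the boundary case $x\in A$ with $x+\xi\notin\Gamma$: there $\pi(x+\xi)=0$ forces $P_\xi(x)=0$, and I must separately argue $Q_\xi(x)=0$. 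This is where I would use that $\widetilde\pi(x):=Mc^x/x!$ is a stationary measure on all of $\ZZ^n_{\ge0}$ by Theorem~\ref{thm:anderson}; comparing the stationarity equation of $\widetilde\pi$ with that of its restriction $\pi=\widetilde\pi|_\Gamma$ and using $\widetilde\pi>0$ everywhere shows that no reaction can carry a state outside $\Gamma$ into $\Gamma$. A nonzero term of $Q_\xi(x)$ would come from a reaction $z+\xi\to z$ active at $x+\xi$ and mapping $x+\xi\notin\Gamma$ to $x\in\Gamma$, which is precisely such a forbidden inflow; hence $Q_\xi(x)=0$, and the identity $P_\xi=Q_\xi$ holds on all of $A$.

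Finally, $P_\xi-Q_\xi$ is a polynomial of degree at most $D$ (by the uniform degree bound of the second paragraph) that vanishes on $A$, so by hypothesis it is identically zero. Since the $p_w$ for distinct complexes $w$ have distinct leading monomials $x^w$ and are linearly independent (Remark~\ref{rem:mass_action_no_indicator}), I would match the coefficient of each $p_w$: for every complex $w$ and every $\xi$,
\[
\kappa_{w\to w+\xi}\,\one_{\{w\to w+\xi\in\R\}}=c^{\xi}\,\kappa_{w+\xi\to w}\,\one_{\{w+\xi\to w\in\R\}}.
\]
Positivity of the rate constants forces the two indicators to agree, which gives reversibility, and the surviving equality is exactly $\kappa_{y\to y'}c^y=\kappa_{y'\to y}c^{y'}$ for every $y\to y'\in\R$. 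This is reaction balance of $c$, completing the argument as in the first paragraph. I expect the main obstacle to be the boundary analysis of the third paragraph — controlling the terms of \eqref{eq:rxnvecbal} whose shifted argument leaves $\Gamma$ — together with keeping the degree of both sides within the range $D$ covered by the richness hypothesis, which is what the weak reversibility input secures.
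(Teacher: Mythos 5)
Your proposal is correct and follows essentially the same route as the paper's proof in Section~\ref{sec:proof_prop}: deduce weak reversibility and deterministic complex balance from the richness of $A$, invoke the product form $\pi(x)=M_\Gamma c^x/x!$ via Theorems~\ref{thm:anderson} and~\ref{thm:complex-balance}, substitute into \eqref{eq:rxnvecbal} to obtain a polynomial of degree at most $\max_{y\to y'\in\R}\|y\|_1$ vanishing on $A$, and finish with the linear independence of the falling-factorial polynomials from Remark~\ref{rem:mass_action_no_indicator}. Your extra care with the degree bound (product complexes are reactant complexes by weak reversibility) and with the case $x+\xi\notin\supp(\pi)$ makes explicit what the paper leaves implicit; for the latter, the stationarity-comparison argument can be replaced by the simpler observation that weak reversibility makes $\ZZ^n_{\ge 0}$ a union of irreducible components, so any active reaction into a state of $\supp(\pi)$ must originate inside the same component.
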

The proof of Theorem \ref{thm:rvb+cb=rb} makes use of results developed in Section \ref{sec:bridge}. Hence, it is deferred to Subsection \ref{sec:proof_prop}. The following result is an immediate corollary.  
\begin{corollary} \label{cor:rvb+cb=rb}
Suppose that $\pi$ is a $\sigma$-finite complex balanced measure of a stochastic mass action system $(\G,K_S)$ with $\supp(\pi)\subseteq \ZZ^n_{\geq0}$. Assume that $\pi$ is also reaction vector balanced and such that no nonzero polynomial of degree at most $\max_{y\to y'\in\R}\|y\|_1$ vanishes on $\supp(\pi)$.
Then, $(\G,K_S)$ is reaction balanced and cycle balanced.
\end{corollary}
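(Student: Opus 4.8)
The plan is to derive the corollary by specializing Theorem \ref{thm:rvb+cb=rb} and then appealing to the measure-level implication recorded in Theorem \ref{thm:stoch_rel}. First I would take $A := \supp(\pi)$ and check that the hypotheses of Theorem \ref{thm:rvb+cb=rb} are satisfied. By assumption $\pi$ is a $\sigma$-finite complex balanced measure of $(\G,K_S)$; the inclusion $A\subseteq\supp(\pi)$ holds (with equality); and the non-vanishing hypothesis on polynomials of degree at most $\max_{y\to y'\in\R}\|y\|_1$ is assumed to hold on $\supp(\pi)=A$. It remains to verify that \eqref{eq:rxnvecbal} holds for every $\xi\in\RR^n$ and every $x\in A$. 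For $\xi\in\ZZ^n$ this is precisely the reaction vector balance of $\pi$, which is assumed to hold on all of $\ZZ^n$, hence in particular on $A$. For $\xi\in\RR^n$ that is not realized as a reaction vector (in particular for any non-integer $\xi$), the index sets $\{y\to y'\in\R : y'-y=\xi\}$ and $\{y\to y'\in\R : y'-y=-\xi\}$ are empty, so both sides of \eqref{eq:rxnvecbal} vanish identically. Thus \eqref{eq:rxnvecbal} holds for all $\xi\in\RR^n$ and all $x\in A$, and Theorem \ref{thm:rvb+cb=rb} applies to give that $(\G,K_S)$ is reaction balanced.

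Next I would deduce cycle balance. By Definition \ref{def:stoch_balanced_system}, reaction balance of $(\G,K_S)$ means that $(\G,K_S)$ has at least one stationary distribution within an active irreducible component and that every stationary distribution within an active irreducible component is a reaction balanced measure. By part (i) of Theorem \ref{thm:stoch_rel}, a reaction balanced measure is in particular cycle balanced. Hence every stationary distribution of $(\G,K_S)$ within an active irreducible component is cycle balanced, and since at least one such distribution exists, this is exactly the assertion that $(\G,K_S)$ is cycle balanced (again by Definition \ref{def:stoch_balanced_system}).

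Since all the substantive content is carried by Theorem \ref{thm:rvb+cb=rb}, I do not expect a genuine obstacle in this corollary; it is essentially a matter of instantiating the theorem with $A=\supp(\pi)$ and combining it with Theorem \ref{thm:stoch_rel}. The only point requiring a moment of care is the passage from the pointwise reaction vector balance of $\pi$ (stated for $\xi\in\ZZ^n$ in Definition \ref{def:balanced_measures}) to the form demanded by Theorem \ref{thm:rvb+cb=rb} (stated for $\xi\in\RR^n$), which is handled by the trivial vanishing of both sides of \eqref{eq:rxnvecbal} whenever $\xi$ is not a reaction vector.
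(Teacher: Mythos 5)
Your proposal is correct and matches the paper's intent: the paper presents this as an immediate consequence of Theorem \ref{thm:rvb+cb=rb}, and your instantiation with $A=\supp(\pi)$, including the observation that \eqref{eq:rxnvecbal} is vacuous for $\xi$ that is not a reaction vector, is exactly the right specialization. The cycle balance conclusion, which the paper leaves implicit, is correctly supplied by combining reaction balance with Theorem \ref{thm:stoch_rel}(i) and Definition \ref{def:stoch_balanced_system}.
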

We give here an example that shows that the somewhat technical conditions in the statement of Theorem \ref{thm:rvb+cb=rb} and of Corollary \ref{cor:rvb+cb=rb} are necessary. 
\begin{example}
Consider the stochastic mass action system $(\G,K_S)$ depicted below.
\begin{equation*}\label{cb+rvb+notrb}
  \begin{tikzpicture}[baseline={(current bounding box.center)}]
   \node[state] (1)  at (0,0)  {$A$};
   \node[state] (2)  at (2,2)  {$0$};
   \node[state] (3)  at (4,0)  {$B$};
    \node[state] (4)  at (6,0)  {$A+C$};
     \node[state] (5)  at (8,2)  {$C$};
     \node[state] (6)  at (10,0)  {$B+C$};
   \path[->]
   (1) edge[bend left] node {$2$} (2)
	edge[bend left] node {$1$} (3)
	  (2) edge[bend left] node {$2$} (3)
	edge[bend left] node {$1$} (1)
	  (3) edge[bend left] node {$2$} (1)
	edge[bend left] node {$1$} (2)
	   (4) edge[bend left] node {$2$} (6)
	edge[bend left] node {$1$} (5)
	  (5) edge[bend left] node {$2$} (4)
	edge[bend left] node {$1$} (6)
	  (6) edge[bend left] node {$2$} (5)
	edge[bend left] node {$1$} (4);
  \end{tikzpicture}
 \end{equation*}
$\pi(x) = (x_A!x_B!)^{-1}$ for $x\in\ZZ^3_{\geq0}$ is a complex balanced measure with $\supp(\pi)=\ZZ^3_{\geq0}$, thus $(\G,K_S)$ is complex balanced by Proposition \ref{prop:nonexplosive} and Remark \ref{rem:nonexplosive}. The irreducible components of the stochastic mass action system $(\G,K_S)$ contained in $\ZZ^n_{\geq 0}$ are given by $x_C=l$ with $l\in\ZZ_{\geq0}$. It can be checked that the stationary distribution $\pi_1$ within the irreducible component with $l=1$ is reaction vector balanced, but no stationary distribution of $(\G,K_S)$ is reaction balanced. This does not contradict Theorem \ref{thm:rvb+cb=rb} because the nonzero polynomial $x_C-1$ (of degree $1 < 2 = \max_{y\to y'\in\R}\|y\|_1$) vanishes on $\supp(\pi_1)$.
\end{example}

The following result is somewhat surprising, as it breaks down the symmetry between the results for graphical balance in the deterministic sense (Section \ref{sec:equilibria_deterministic}) and the results for graphical balance in the stochastic sense. 
Indeed, the corresponding statement for the deterministic case is not true in general (see Remark \ref{rem:no_implication_det}(1)).

\begin{corollary}[Complex balance $\&$ reaction vector balance $ \implies$  Reaction balance] \label{cor:cb_plus_rvb_rb_2}
Suppose that $(\G,K_S)$ is a complex balanced stochastic mass action system. If $(\G,K_S)$ is reaction vector balanced, then $\G$ is reversible and $(\G,K_S)$ is reaction balanced. 
\end{corollary}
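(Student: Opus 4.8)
The plan is to manufacture the hypotheses of Theorem~\ref{thm:rvb+cb=rb} for one carefully chosen complex balanced measure of \emph{full} support, and then read off reversibility from the resulting reaction balance. The essential difficulty is that reaction vector balance is only assumed for the stationary distributions living on active irreducible components, and any single component sits inside a coset $v+S$, so it is annihilated by a linear conservation law. This is precisely the obstruction displayed by the counterexample following Corollary~\ref{cor:rvb+cb=rb}, where the degree hypothesis fails because $x_C-1$ vanishes on the support of $\pi_1$. Hence the core of the argument is to exhibit a test set $A$ that is genuinely ``fat'' (no nonzero polynomial vanishes on it) and on which \eqref{eq:rxnvecbal} nevertheless holds.

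First I would use complex balance to pin down the global shape of the stationary measures. Since $(\G,K_S)$ is complex balanced, Theorem~\ref{thm:one_to_many_cb_dist} together with Theorem~\ref{thm:complex-balance} supplies a positive complex balanced equilibrium $c$ of the deterministic system $(\G,K_D)$ such that the unique stationary distribution on each irreducible component $\Gamma$ is $\pi_\Gamma(x)=M_\Gamma\,c^x/x!$ with $M_\Gamma>0$. I then set $\pi(x)=c^x/x!$, which by Theorem~\ref{thm:anderson} is a $\sigma$-finite complex balanced measure with $\supp(\pi)=\ZZ^n_{\ge 0}$, and whose restriction to each component is proportional to the corresponding $\pi_\Gamma$.

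Next I would locate the fat test set among the ``large'' states. Let $m=\max_{y\in\C}\max_i y_i$ and put $A=\{x\in\ZZ^n_{\ge 0}: x_i\ge m \text{ for all } i\}$. For such $x$ every reaction is active at $x$ (since $x\ge y$ for every complex $y$), so the component $\Gamma\ni x$ is active and carries the reaction vector balanced distribution $\pi_\Gamma$ by hypothesis. The key point is that $x$ and $x+\xi$ lie in the same $\Gamma$ whenever $\xi$ is a reaction vector: the reaction $y\to y+\xi$ is active at the large state $x$ and carries $x$ to $x+\xi$. Moreover, applying Theorem~\ref{thm:nec_conds_measure}(iii) to the reaction vector balanced measure $\pi_\Gamma$ (with $\R_\Gamma=\R$, as $\Gamma$ is active) shows the set of reaction vectors is symmetric, so for every $\xi$ with a reaction in $R_{-\xi}$ there is also one in $R_\xi$; thus the only nontrivial directions are genuine reaction vectors, already handled. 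Scaling the identity \eqref{eq:rxnvecbal} for $\pi_\Gamma$ by $M_\Gamma^{-1}$ (using $\pi_\Gamma(x)=M_\Gamma\pi(x)$ and $\pi_\Gamma(x+\xi)=M_\Gamma\pi(x+\xi)$, both valid since $x,x+\xi\in\Gamma$) then gives \eqref{eq:rxnvecbal} for $\pi$ at every $x\in A$ and every $\xi$.

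Finally I would assemble the pieces. Since $A$ contains a translated orthant, no nonzero polynomial whatsoever—in particular none of degree at most $\max_{y\to y'\in\R}\|y\|_1$—vanishes on $A$, so every hypothesis of Theorem~\ref{thm:rvb+cb=rb} is met and $(\G,K_S)$ is reaction balanced. For reversibility I would pick any active irreducible component $\Gamma$ (for instance one meeting $A$); its stationary distribution is now a reaction balanced measure, so Theorem~\ref{thm:nec_conds_measure}(i) forces $\G_\Gamma$ to be reversible, and since $\Gamma$ is active, $\G_\Gamma=\G$, yielding reversibility of $\G$. I expect the main obstacle to be exactly the promotion of the ``thin'' per-component balance to a statement on a fat set: a single component is too small (a conservation law kills it), and the resolution is the observation that large states assemble into active components covering a full translated orthant. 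This is the step with no deterministic analogue (compare Remark~\ref{rem:no_implication_det}(1)), and it is what the polynomial/degree hypothesis of Theorem~\ref{thm:rvb+cb=rb} is tailored to exploit.
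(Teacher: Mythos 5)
Your strategy is essentially the paper's: produce a single complex balanced measure whose support is large enough that no low-degree polynomial vanishes on it, show it inherits reaction vector balance from the per-component hypothesis, and then invoke Theorem~\ref{thm:rvb+cb=rb} (the paper uses the packaged form, Corollary~\ref{cor:rvb+cb=rb}, with $\supp(\pi)=\ZZ^n_{\ge 0}$). Your assembly of the per-component balance into balance on $A$ --- same-component membership of $x$ and $x+\xi$, cancellation of the constants $M_\Gamma$, and the symmetry of reaction vectors via Theorem~\ref{thm:nec_conds_measure}(iii) --- is in fact spelled out more carefully than in the paper, which simply asserts that the full-support distribution ``is necessarily reaction vector balanced by assumption.''

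There is, however, one step you assume silently that must be justified: that each large state $x\in A$ actually lies in an \emph{irreducible component} $\Gamma$ (which then carries the stationary distribution $\pi_\Gamma$ to which the reaction vector balance hypothesis applies). Having every reaction active at $x$ guarantees that $x+\xi$ is accessible from $x$, but not that $x$ is accessible back, and the hypothesis of the corollary says nothing about states outside active irreducible components. This is exactly where the paper's proof inserts the missing ingredient: since a complex balanced distribution exists within an active irreducible component, $\G$ is weakly reversible (Theorem~\ref{thm:nec_conds_measure}(ii) or Theorem~\ref{thm:complex-balance}), and then by the result of Paulev\'e--Craciun--Koeppl \cite{craciun:dynamical} the set $\ZZ^n_{\ge 0}$ is a union of irreducible components. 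Alternatively, for your specific $A$ one can argue directly: weak reversibility provides a closed path $y\to y'\to\cdots\to y$ through any reaction $y\to y'$, and each intermediate state along it started from $x$ has the form $x-y+y_k$, which dominates $y_k$ precisely when $x\ge y$; hence the return path is traversable from $x+\xi$ for every $x\in A$, so $x$ and $x+\xi$ do lie in a common irreducible component. With that sentence added, your argument is complete.
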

\begin{proof}
 By Theorem \ref{thm:nec_conds_measure}, since there exists a complex balanced distribution on an active irreducible component, then $\G$ is weakly reversible. Hence, due to \cite{craciun:dynamical}, $\ZZ^n_{\ge 0}$ is a union of irreducible components. Therefore, there exists a complex balanced distribution $\pi$ with $\supp(\pi)=\ZZ^n_{\ge 0}$, which is necessarily reaction vector balanced by assumption. The conclusion follows from Corollary \ref{cor:rvb+cb=rb}.
\end{proof}

\section{Bridging deterministic and stochastic mass action systems}\label{sec:bridge}

 The main focus of this section is to relate graphical balance of a deterministic mass action system with graphical balance of the corresponding stochastic mass action system, and vice versa. We begin by stating some known results.  
 \subsection{Existing results}  
Connection between equilibria of particular deterministic mass action systems and the stationary distributions of the stochastic counterparts have been shown in \cite{anderson:product-form, joshi:detailed, cappelletti:complex_balanced}. 

The first result in this sense is \cite[Theorem 4.1]{anderson:product-form}, which states the following:

\begin{theorem}\label{thm:anderson}
Let $(\G,K_D)$ be a deterministic mass action system with a positive complex balanced equilibrium $c \in \RR^n_{> 0}$. Then, the corresponding stochastic mass action system $(\G,K_S)$ has a product-form Poisson-like stationary distribution within every irreducible component $\Gamma$ given by the expression
 \begin{equation}\label{eq:product-form}
  \pi_\Gamma(x)=M^{c}_\Gamma\frac{c^x}{x!}\quad\text{for }x\in\Gamma,
 \end{equation}
 where $M^{c}_\Gamma$ is a normalizing constant.
\end{theorem}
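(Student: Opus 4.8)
The plan is to verify directly that the explicit product-form measure $\mu(x)=c^x/x!$ (set equal to $0$ for $x\notin\ZZ^n_{\ge 0}$) is a \emph{complex balanced measure} of the stochastic system $(\G,K_S)$ in the sense of \eqref{eq:compbal_m}, and then invoke Theorem \ref{thm:stoch_imp} to conclude that it is stationary; restricting to an irreducible component and normalizing will then give the claimed distribution \eqref{eq:product-form}.

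First I would substitute $\pi=\mu$ and the stochastic mass action rates $\lambda_{y\to y'}(x)=\kappa_{y\to y'}\frac{x!}{(x-y)!}\one_{\{x\ge y\}}$ into the complex balancing identity \eqref{eq:compbal_m} at a fixed complex $y\in\C$. On the left-hand side the factor $x!$ cancels and $c^x=c^{x-y}c^y$ factors out, leaving $c^y\big(\sum_{y'}\kappa_{y\to y'}\big)\,\one_{\{x\ge y\}}c^{x-y}/(x-y)!$. On the right-hand side the reverse reaction $y'\to y$ is evaluated at the shifted state $x+y'-y$; the key bookkeeping observation is that $\lambda_{y'\to y}(x+y'-y)=\kappa_{y'\to y}\frac{(x+y'-y)!}{(x-y)!}\one_{\{x\ge y\}}$, so that the shifted factorial $(x+y'-y)!$ cancels against the factorial in $\mu(x+y'-y)$ and the very same indicator $\one_{\{x\ge y\}}$ governs both sides. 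After cancelling the common factor $\one_{\{x\ge y\}}c^{x-y}/(x-y)!$, the identity reduces exactly to $c^y\sum_{y'}\kappa_{y\to y'}=\sum_{y'}\kappa_{y'\to y}c^{y'}$, which is precisely the deterministic complex balance condition at the complex $y$, and hence holds by the hypothesis that $c$ is a complex balanced equilibrium. (For $x$ with a negative coordinate both sides vanish, since $x\ge y\ge 0$ fails, so the condition holds trivially there.)

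Having established that $\mu$ is a complex balanced measure, Theorem \ref{thm:stoch_imp} shows that $\mu$ is a stationary measure of $(\G,K_S)$. Since there are no transitions between distinct irreducible components, the restriction $\mu|_\Gamma$ is again a stationary measure on each irreducible component $\Gamma$, and it is finite because $\sum_{x\in\Gamma}c^x/x!\le\sum_{x\in\ZZ^n_{\ge 0}}c^x/x!=\prod_{i=1}^n e^{c_i}<\infty$; normalizing by $M^c_\Gamma=\big(\sum_{x\in\Gamma}c^x/x!\big)^{-1}$ yields the stationary distribution \eqref{eq:product-form}. I expect the main obstacle to be the indicator-and-factorial bookkeeping in the second step: one must check that the forward term at $x$ and the reverse term at $x+y'-y$ are supported on the same set $\{x\ge y\}$ and share the identical factor $c^{x-y}/(x-y)!$, so that the per-complex equation collapses cleanly onto the deterministic complex balance identity rather than onto some boundary-dependent variant.
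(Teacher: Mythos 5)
The paper does not prove this statement; it is quoted verbatim as \cite[Theorem 4.1]{anderson:product-form}, so there is no internal proof to compare against. Your argument is essentially the standard Anderson--Craciun--Kurtz verification, and the central computation is correct: with $\mu(x)=c^x/x!$ the indicator on both sides of \eqref{eq:compbal_m} is the same $\one_{\{x\ge y\}}$, the factor $c^{x-y}/(x-y)!$ cancels, and the identity collapses to $c^y\sum_{y'}\kappa_{y\to y'}=\sum_{y'}\kappa_{y'\to y}c^{y'}$, which is deterministic complex balance at $c$. Stationarity of $\mu$ then follows from Theorem \ref{thm:stoch_imp}, and the finiteness estimate $\sum_{x\in\Gamma}c^x/x!\le\prod_i e^{c_i}$ is fine.

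The one step you should tighten is the restriction to $\Gamma$. ``No transitions between distinct irreducible components'' is true by definition, but it does not by itself rule out transitions \emph{into} $\Gamma$ from states of $\ZZ^n_{\ge 0}$ that lie in no irreducible component; since $\mu$ is strictly positive on all of $\ZZ^n_{\ge 0}$, any such inflow would break stationarity of $\mu|_\Gamma$ at the receiving state. To close this, note that a positive complex balanced equilibrium forces $\G$ to be weakly reversible (Horn's theorem, cf.\ Theorem \ref{thm:necessary}(ii)), and for a weakly reversible stochastic mass action system $\ZZ^n_{\ge 0}$ is a union of irreducible components \cite{craciun:dynamical} --- concretely, if $x\ge y'$ and $y\to y'\in\R$, weak reversibility supplies a directed path $y'\to\cdots\to y$ in the reaction graph along which every reaction is active, so $x+y-y'$ is accessible from $x$ and hence lies in the same component. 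With that observation, $\mu|_\Gamma$ is indeed stationary (in fact still complex balanced) on each $\Gamma\subseteq\ZZ^n_{\ge 0}$, and the rest of your argument goes through.
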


The study is then carried on in \cite{joshi:detailed, cappelletti:complex_balanced}, where further relations between stochastic and deterministic models are unveiled. We start with presenting one of the main results of \cite[Theorems 5.9 and 5.10]{joshi:detailed}:
\begin{theorem}\label{thm:badal}
 If the deterministic mass action system $(\G,K_D)$ is reaction balanced (i.e. detailed balanced as a reaction network), then the corresponding stochastic mass action system $(\G,K_S)$ is reaction vector balanced (i.e. detailed balanced as a Markov chain). Moreover, the converse holds if the function
 $$y\to y'\quad \longmapsto\quad y'-y$$
 is a one-to-one correspondence between the reactions in $\R$ and their reaction vectors.
\end{theorem}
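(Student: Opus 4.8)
The plan is to treat the two directions separately, with the product-form stationary distribution of Theorem \ref{thm:anderson} as the common bridge. For the forward implication I would first use Remark \ref{rem:balanced_system_mass_action} to obtain a positive reaction balanced state $c\in\RR^n_{>0}$; for deterministic mass action this means $\kappa_{y\to y'}c^y=\kappa_{y'\to y}c^{y'}$ for every $y\to y'\in\R$, which since $c>0$ also forces $\G$ to be reversible. By Theorem \ref{thm:det_rel}(i) this $c$ is complex balanced, so Theorem \ref{thm:anderson} provides the stationary distribution $\pi_\Gamma(x)=M^c_\Gamma c^x/x!$ on each irreducible component $\Gamma$, and by Theorem \ref{thm:one_to_many_cb_dist} these are the only stationary distributions. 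It then suffices to verify that each $\pi_\Gamma$ satisfies the stochastic reaction balance equation \eqref{eq:rxnbal}. Substituting the product form and the mass action rates, the left side becomes $M^c_\Gamma\,\kappa_{y\to y'}\,c^{x}\,\mathbbm{1}_{\{x\ge y\}}/(x-y)!$ and the right side becomes $M^c_\Gamma\,\kappa_{y'\to y}\,c^{x+y'-y}\,\mathbbm{1}_{\{x\ge y\}}/(x-y)!$, so the deterministic relation $\kappa_{y\to y'}c^y=\kappa_{y'\to y}c^{y'}$ makes them equal. Hence $\pi_\Gamma$ is reaction balanced, and by Theorem \ref{thm:stoch_rel}(i) it is reaction vector balanced; as this accounts for every stationary distribution, $(\G,K_S)$ is reaction vector balanced.

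For the converse, the one-to-one correspondence hypothesis is precisely what localizes the summed condition \eqref{eq:rxnvecbal} to individual reaction pairs. Given a reaction vector balanced stationary distribution $\pi$ on an active irreducible component, I would fix a reaction $y\to y'$ and set $\xi=y'-y$. Since distinct reactions carry distinct reaction vectors, each of the two sums in \eqref{eq:rxnvecbal} contains at most one term; a reaction realizing $-\xi$ exists by the necessary condition of Theorem \ref{thm:nec_conds_measure}(iii), and the correspondence hypothesis identifies it with the reverse reaction $y'\to y$ (so in particular $\G$ is reversible). Thus \eqref{eq:rxnvecbal} collapses exactly to the stochastic reaction balance equation \eqref{eq:rxnbal}, showing that $\pi$ is reaction balanced, and therefore complex balanced by Theorem \ref{thm:stoch_rel}(i).

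From here I would extract the deterministic conclusion exactly as in the reaction balanced case of the proof of Theorem \ref{thm:stoch_bal}: complex balance forces $\pi$ into the product form $\pi(x)=Mc^x/x!$ for some $c>0$, and feeding this back into \eqref{eq:rxnbal} with the mass action rates and simplifying yields $\left(\kappa_{y\to y'}-\kappa_{y'\to y}c^{y'-y}\right)\mathbbm{1}_{\{x\ge y\}}=0$. Because the component is active, for each reaction there is a state $x\ge y$ in it, so $\kappa_{y\to y'}c^y=\kappa_{y'\to y}c^{y'}$ holds for all reactions; this is precisely condition \eqref{eq:detbal} at $c$, so $c$ is a positive deterministic reaction balanced state and $(\G,K_D)$ is reaction balanced by Theorem \ref{thm:bal}.

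The step I expect to be the main obstacle is the pairing used in the converse, namely recognizing that the unique reaction carrying the opposite reaction vector is the genuine reverse reaction rather than some unrelated reaction. This is exactly what the one-to-one correspondence secures, and it is essential: without it, reaction vector balance can be achieved by two distinct non-reverse reactions whose vectors happen to be negatives of one another (as in non-reversible examples), in which case the network need not even be reversible and the implication to deterministic reaction balance breaks down. The forward direction, by contrast, requires only the routine product-form substitution and the elementary identity relating $c^{x}$ and $c^{x+y'-y}$.
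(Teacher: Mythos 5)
The paper offers no proof of this statement --- it is quoted from \cite[Theorems 5.9 and 5.10]{joshi:detailed} --- so there is nothing internal to compare against; your attempt must stand on its own. Your forward direction does: a positive reaction balanced state $c$ gives $\kappa_{y\to y'}c^y=\kappa_{y'\to y}c^{y'}$ for all reactions (forcing $\G$ reversible), $c$ is complex balanced, Theorems \ref{thm:anderson} and \ref{thm:one_to_many_cb_dist} identify the product-form measures as the only stationary distributions, and your substitution correctly verifies \eqref{eq:rxnbal}, hence \eqref{eq:rxnvecbal}.

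The converse, however, has a genuine gap at exactly the step you single out as the main obstacle. Injectivity of $y\to y'\mapsto y'-y$ tells you that the reaction realizing $-\xi$ (whose existence in $\R_\Gamma$ you correctly obtain from Theorem \ref{thm:nec_conds_measure}(iii)) is \emph{unique}; it does not tell you that this reaction is $y'\to y$. Two distinct non-reverse reactions with opposite reaction vectors still yield an injective map, since $\xi\neq-\xi$. Concretely, take $\R=\{A\to B,\ 2B\to A+B\}$ with any positive rate constants: the reaction-vector map is one-to-one, each active irreducible component ($x_A+x_B=N\ge 2$ with $x_B\ge 1$) carries a birth--death chain whose unique stationary distribution is reaction vector balanced, yet $(\G,K_D)$ is not reaction balanced --- the network is not reversible, so no active equilibrium can satisfy \eqref{eq:detbal}. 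Thus the collapse of \eqref{eq:rxnvecbal} to \eqref{eq:rxnbal} is not justified by the stated hypotheses, and it cannot be: what is missing is reversibility of $\G$, which is part of the hypotheses of the original result in \cite{joshi:detailed} and appears to have been lost in the restatement here. Once reversibility is granted, your pairing is correct ($y'\to y\in\R$ carries the vector $-\xi$ and by injectivity is the only reaction that does), and the rest of your converse --- reaction balance of $\pi$, complex balance via Theorem \ref{thm:stoch_rel}, the product form, and the extraction of $\kappa_{y\to y'}c^y=\kappa_{y'\to y}c^{y'}$ from activity of the irreducible component --- goes through as written.
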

In \cite{cappelletti:complex_balanced} a study on complex balanced distribution is conducted, and a stochastic counterpart for the deterministic model deficiency zero theory is developed. In particular, \cite[Corollary 19]{cappelletti:complex_balanced} states the following:

\begin{theorem} \label{thm:complex-balance}
   If a stochastic reaction system $(\G,K_S)$ admits a complex balanced distribution within an active irreducible component then $\G$ is weakly reversible. Moreover, a stochastic mass action system $(\G,K_S)$ admits a complex balanced distribution within an active irreducible component if and only if the corresponding deterministic mass action system $(\G,K_D)$ admits a positive complex balanced state. If this is the case, then on every irreducible component $\Gamma$ there exists a unique stationary distribution $\pi_\Gamma$. Such $\pi_\Gamma$ is a complex balanced distribution and it has the form \eqref{eq:product-form}, where $c$ is a positive complex balanced equilibrium of $(\G,K_D)$.
\end{theorem}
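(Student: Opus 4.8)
The plan is to establish the three assertions in order: weak reversibility, the biconditional, and the form and uniqueness of the stationary distribution.

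First, weak reversibility of $\G$ is immediate. Since the complex balanced distribution $\pi$ lives on an \emph{active} irreducible component $\Gamma$, the active subnetwork $\G_\Gamma$ coincides with $\G$, and Theorem \ref{thm:nec_conds_measure}(ii) asserts precisely that $\G_\Gamma$ is weakly reversible. In particular every complex of $\G$ is both a reactant and a product complex, a fact I will use repeatedly.

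Next I would dispatch the easy implication of the biconditional: that a positive complex balanced equilibrium $c \in \RR^n_{>0}$ of $(\G,K_D)$ produces a complex balanced stochastic distribution of the product form \eqref{eq:product-form}. Setting $\pi_\Gamma(x)=M^c_\Gamma c^x/x!$ and substituting the stochastic mass action rates into \eqref{eq:compbal_m}, the factorials telescope and both sides share the common factor $c^{x-y}\mathbbm{1}_{\{x\ge y\}}/(x-y)!$, so the identity collapses to the deterministic complex balance relation $c^y\sum_{y'}\kappa_{y\to y'}=\sum_{y'}\kappa_{y'\to y}c^{y'}$ at $c$. Hence $\pi_\Gamma$ is complex balanced, and stationary by Theorem \ref{thm:stoch_imp}; this is the content of Theorem \ref{thm:anderson}, which furthermore provides the form \eqref{eq:product-form} on \emph{every} irreducible component. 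Uniqueness within each component then follows from the standard fact that an irreducible continuous-time Markov chain has at most one stationary distribution up to normalization.

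The hard direction is the converse: extracting a positive deterministic complex balanced equilibrium from the mere existence of a stochastic complex balanced distribution $\pi$ on the active component $\Gamma$. Weak reversibility alone is insufficient here, since complex balance of $(\G,K_D)$ is a genuine constraint on the rate constants, so I must use $\pi$ itself. The strategy is the fictitious-species/deficiency-zero construction already deployed in the proof of Proposition \ref{thm:exist_stat_dist}: adjoin one species $S_y$ per complex, form the weakly reversible deficiency-zero network $\widehat\G$ with reactions $y+S_y\to y'+S_{y'}$, and lift $\pi$ to a stationary measure $\widehat\mu$ supported on a union of \emph{finite} irreducible components. Because $\widehat\G$ has deficiency zero and is weakly reversible, $(\widehat\G,K_D)$ is complex balanced for every choice of positive rate constants and hence has a positive complex balanced equilibrium; Theorem \ref{thm:anderson} then forces $\widehat\mu$, and thus $\pi$, to have the product-Poisson form $\pi(x)=M c^x/x!$ for some $c\in\RR^n_{>0}$, once one checks that the per-component normalizers glue to a single constant along accessible transitions. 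With the form in hand, substituting $\pi(x)=M c^x/x!$ back into \eqref{eq:compbal_m} and using that $\Gamma$ is active (so for each complex $y$ there is a state $x\ge y$ in the support) reverses the telescoping computation above and yields the deterministic complex balance relation at $c$; thus $c$ is a positive complex balanced equilibrium of $(\G,K_D)$, which closes the biconditional and, via the easy direction, delivers the form \eqref{eq:product-form} on every irreducible component together with uniqueness.

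I expect the main obstacle to be this forward direction — concretely, verifying that $\widehat\mu$ decomposes over \emph{finite} irreducible components and that the product-form constants $M_L$ arising from Theorem \ref{thm:anderson} are consistent across them, so that $\pi$ is genuinely of product-Poisson form rather than merely proportional to $c^x/x!$ component by component. Securing the single global parameter $c$ is the crux; everything else is bookkeeping with the mass action rates and the standard Markov-chain uniqueness argument.
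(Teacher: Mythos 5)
First, note that the paper does not prove this statement at all: Theorem \ref{thm:complex-balance} is quoted verbatim as \cite[Corollary 19]{cappelletti:complex_balanced}, so there is no in-paper proof to compare yours against. Judged on its own merits, your reconstruction is essentially the right argument, and it correctly identifies the machinery that the present paper itself deploys for the closely related Proposition \ref{thm:exist_stat_dist} and Theorem \ref{thm:poisson_converse}: weak reversibility from Theorem \ref{thm:nec_conds_measure}(ii) applied to the active component (where $\G_\Gamma=\G$); the telescoping computation reducing \eqref{eq:compbal_m} with $\pi(x)=Mc^x/x!$ to the deterministic complex balance relation at $c$; and, for the converse, the fictitious-species lift to the weakly reversible deficiency-zero network $\widehat\G$, whose finite irreducible components force the product-Poisson form, followed by gluing of the per-component constants along single reaction steps. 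Your use of activity of $\Gamma$ to guarantee, for each complex $y$, a state $x\in\supp(\pi)$ with $x\ge y$ when reversing the telescoping is exactly the right point, since the projection of a complex balanced equilibrium of $\widehat\G$ is not automatically one of $\G$.

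The one step I would not let pass as stated is uniqueness. ``An irreducible continuous-time Markov chain has at most one stationary distribution up to normalization'' is not a standard fact in the generality you need: for an explosive irreducible chain a probability solution of the stationarity equations need not be unique (uniqueness of invariant measures requires recurrence, and a finite invariant measure forces positive recurrence only for non-explosive chains). The paper is careful about this distinction: Proposition \ref{prop:nonexplosive} explicitly assumes non-explosivity, and Remark \ref{rem:nonexplosive} records that complex balanced stochastic mass action systems are non-explosive by the separate result of \cite{ACK:explosion}. To close the uniqueness part of the statement you must either invoke that non-explosivity theorem or supply a recurrence argument; as written, this step is a genuine, though localized and patchable, gap.
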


 Moreover, the following is implied by \cite[Theorem 23 and discussion in Section 5.1]{cappelletti:complex_balanced}.
 \begin{theorem} \label{thm:poisson_converse}
Let $A \subseteq \ZZ^n_{\ge 0}$ be any set such that no nonzero polynomial of degree at most $\max_{y\to y'\in\R}\|y\|_1$ vanishes on $A$. Let $\widetilde A := A \cup \{x \in \ZZ^n_{\ge 0} : x \ge y \mbox{  for some }y \to y' \in \R \mbox{ and such that } x-y+y' \in A \}$. Suppose that $\pi$ is a stationary distribution of a stochastic mass action system $(\G,K_S)$ such that $\pi(x)=M(x)c^x/x!$ for $x\in \widetilde A$, where $M(x) = M(x')$ for any $x, x'$ within the same irreducible component. Then $(\G,K_D)$ is complex balanced and $c$ is a complex balanced equilibrium of $(\G,K_D)$.
\end{theorem}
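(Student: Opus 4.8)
The plan is to substitute the product-form expression for $\pi$ into the global balance (stationarity) equation, evaluated only at the points of $A$, and to turn the resulting relations into a single polynomial identity whose vanishing on the rich set $A$ forces the deterministic complex balance equations. Write, for a complex $\eta\in\C$,
\begin{equation*}
g(\eta)=c^{\eta}\sum_{y'\,:\,\eta\to y'\in\R}\kappa_{\eta\to y'}-\sum_{y\,:\,y\to\eta\in\R}\kappa_{y\to\eta}c^{y},
\end{equation*}
so that ``$g(\eta)=0$ for every $\eta\in\C$'' is precisely the statement that $c$ is a complex balanced equilibrium of $(\G,K_D)$. It therefore suffices to prove that every $g(\eta)$ vanishes.

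First I would fix $x\in A$ and write out the stationarity equation at $x$. The role of the set $\widetilde A$ is that every state entering that equation lies in $\widetilde A$: the outflow term uses $\pi(x)$ with $x\in A\subseteq\widetilde A$, while each nonzero inflow term uses $\pi(x+y-y')$ with $x\ge y'$, and such a state satisfies both $x+y-y'\ge y$ and $(x+y-y')-y+y'=x\in A$, hence lies in $\widetilde A$ by definition. I may thus replace every occurrence of $\pi$ by $M(\cdot)c^{(\cdot)}/(\cdot)!$. Moreover, whenever an inflow term is nonzero the states $x$ and $x+y-y'$ are joined by the active reaction $y\to y'$, so they lie in the same irreducible component and the equal factors $M(x+y-y')=M(x)$ cancel (these are positive on $\supp(\pi)$, so the cancellation is legitimate there).

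Next I would clear denominators by multiplying through by $x!/c^{x}$. Using Remark \ref{rem:mass_action_no_indicator}, each weighted transition term collapses to the polynomial $p_{y}(x)=\prod_{i=1}^{n}x_i(x_i-1)\cdots(x_i-y_i+1)$, the outflow terms producing $p_{y}$ for reactant complexes $y$ and the inflow terms producing $p_{y'}$ for product complexes $y'$. Regrouping by complex, the identity becomes
\begin{equation*}
\sum_{\eta\in\C}c^{-\eta}g(\eta)\,p_{\eta}(x)=0\qquad\text{for all }x\in A.
\end{equation*}
Since the $p_{\eta}$ have pairwise distinct leading monomials $x^{\eta}$, they are linearly independent (Remark \ref{rem:mass_action_no_indicator}), and the left-hand polynomial has degree $\max\{\|\eta\|_1:g(\eta)\ne0\}$. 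If this degree is at most $\max_{y\to y'\in\R}\|y\|_1$, then the hypothesis on $A$ forces the polynomial to be identically zero, whence $g(\eta)=0$ for every $\eta$; one then concludes that the positive vector $c$ is a complex balanced equilibrium and, via Theorem \ref{thm:bal}, that the system $(\G,K_D)$ is complex balanced.

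The main obstacle is exactly this degree bound. Outflow terms involve only reactant complexes, all of $\ell_1$-norm at most $\max_{y\to y'\in\R}\|y\|_1$, but inflow terms involve product complexes, whose $\ell_1$-norm can a priori exceed this value; indeed for a product-only complex $\eta$ one has $g(\eta)=-\sum_{y\to\eta}\kappa_{y\to\eta}c^{y}<0$, so $p_{\eta}$ genuinely survives and may inflate the degree beyond the range the hypothesis on $A$ controls. Resolving this is the heart of the argument: one must show that the product-form assumption precludes product-only complexes, so that the active network is weakly reversible, every complex is a reactant, and the degree drops to $\max_{y\to y'\in\R}\|y\|_1$. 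The lever I would use is that a surviving high-degree product-only complex makes the top-degree homogeneous part of the left-hand polynomial a nonzero form with strictly one-signed coefficients, which one then shows is incompatible with its vanishing on $A$, together with the global constancy of $M$ on irreducible components and the finiteness of the stationary distribution. I expect the bulk of the work to lie in making this last incompatibility precise; once the degree is controlled, the linear-independence/richness step closes the proof immediately.
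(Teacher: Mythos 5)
The paper does not actually prove this statement itself; it imports it from Theorem 23 and the discussion in Section 5.1 of \cite{cappelletti:complex_balanced}, so there is no internal proof to compare against line by line. Your setup is the natural one and is carried out correctly: substituting the product form into the stationarity equation at each $x\in A$, observing that every state entering that equation lies in $\widetilde A$, cancelling the constants $M$ along active transitions, multiplying by $x!/c^x$, and regrouping by complex to obtain $\sum_{\eta\in\C}c^{-\eta}g(\eta)\,p_\eta(x)=0$ for $x\in A$. You also correctly isolate the one real difficulty: product-only complexes can push the degree of this polynomial above $\max_{y\to y'\in\R}\|y\|_1$, beyond what the hypothesis on $A$ controls.

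The gap is that the lever you propose for closing this --- that a nonzero, one-signed top-degree homogeneous part is ``incompatible with vanishing on $A$'' --- does not work, and in fact no argument using only the balance equations at the points of $A$ can work. A set $A$ on which no nonzero polynomial of degree at most $D$ vanishes may be finite and bounded, and a polynomial with strictly negative leading form can certainly vanish on such a set. Concretely, take $\R=\{A\to 2A,\ A\to 0,\ 0\to A\}$ with rate constants $\kappa_1,\kappa_2,\kappa_3$, so that $D=1$, and take $A=\{0,5\}$. With $c=4$ and $\kappa_3=4\kappa_2$ your polynomial reduces to $(\kappa_1/4)\,x(5-x)$, which vanishes on $A$ despite its strictly negative quadratic part; one checks directly that the stationarity equations at $x=0$ and $x=5$, evaluated with $\pi(x)=Mc^x/x!$ on $\widetilde A=\{0,1,4,5,6\}$, hold exactly, yet the network has the product-only complex $2A$ and admits no positive complex balanced equilibrium. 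The theorem is not contradicted only because the genuine (unique) stationary distribution of this chain fails to have the product form on $\widetilde A$ --- but detecting that requires information your argument never uses: stationarity at states outside $A$ (which involves $\pi$ at points where no form is assumed), summability, and the recurrence/weak-reversibility machinery of \cite{cappelletti:complex_balanced} that first forces every complex carrying a nonzero coefficient to be a reactant complex, after which the degree bound and your linear-independence step do close the proof. As written, the degree-control step is a genuine gap and the route you sketch for it cannot be completed.
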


\begin{corollary}
 Suppose that $\pi=c^x/x!$ for $x\in\ZZ^n_{\ge0}$ is a stationary measure of a stochastic mass action system $(\G,K_S)$. Then $(\G,K_D)$ is complex balanced and $c$ is a complex balanced equilibrium of $(\G,K_D)$.
\end{corollary}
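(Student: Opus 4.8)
The plan is to derive this as an immediate consequence of Theorem~\ref{thm:poisson_converse}, applied with the largest admissible choice of the auxiliary set, namely $A = \ZZ^n_{\ge 0}$.

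First I would verify that $A = \ZZ^n_{\ge 0}$ meets the hypothesis of Theorem~\ref{thm:poisson_converse}, that is, that no nonzero polynomial of degree at most $\max_{y\to y'\in\R}\|y\|_1$ vanishes on it. In fact a stronger and standard statement holds: no nonzero polynomial of any degree can vanish on all of $\ZZ^n_{\ge 0}$. This follows by induction on $n$, fixing all but one variable and using that a univariate polynomial with infinitely many roots is identically zero, forcing every coefficient to vanish. Since $A$ is already the entire nonnegative orthant, the enlarged set $\widetilde A$ appearing in Theorem~\ref{thm:poisson_converse} coincides with $A = \ZZ^n_{\ge 0}$.

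Next I would reconcile the hypothesis here (a stationary \emph{measure}) with that of Theorem~\ref{thm:poisson_converse} (a stationary \emph{distribution}). The key observation is that $\pi(x) = c^x/x!$ is automatically a finite measure, since
$$\sum_{x \in \ZZ^n_{\ge 0}} \frac{c^x}{x!} = \prod_{i=1}^n e^{c_i} = e^{c_1 + \cdots + c_n} < \infty.$$
Consequently the normalized measure $\pi'(x) = e^{-(c_1 + \cdots + c_n)}\, c^x/x!$ is a stationary distribution, and it has precisely the form $\pi'(x) = M(x)\, c^x/x!$ demanded by Theorem~\ref{thm:poisson_converse}, with $M(x) \equiv e^{-(c_1 + \cdots + c_n)}$ constant, hence in particular constant on every irreducible component.

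Applying Theorem~\ref{thm:poisson_converse} with the set $A = \ZZ^n_{\ge 0}$ and the distribution $\pi'$ then yields directly that $(\G,K_D)$ is complex balanced and that $c$ is a complex balanced equilibrium of $(\G,K_D)$, completing the proof. I do not expect any serious obstacle: the only two points needing a word of justification are the elementary fact that a nonzero polynomial cannot vanish on all of $\ZZ^n_{\ge 0}$ and the passage from a stationary measure to a stationary distribution by normalization, both of which are routine.
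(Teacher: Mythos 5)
Your proposal is correct and matches the intended derivation: the corollary is stated in the paper as an immediate consequence of Theorem~\ref{thm:poisson_converse}, obtained exactly as you describe by taking $A=\ZZ^n_{\ge 0}$ (so $\widetilde A = A$, and no nonzero polynomial vanishes there) and normalizing the finite measure $c^x/x!$ to a stationary distribution with globally constant $M(x)$. No gaps.
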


\subsection{Expanding the bridge} \label{sec:expand_bridge}

In this section, we establish further connections between a deterministic mass action system $(\G,K_D)$ and the corresponding stochastic mass action system $(\G,K_S)$. 

\begin{theorem} \label{thm:bridge}
Suppose that $(\G,K_D)$ is a deterministic mass action system and $(\G,K_S)$ is the corresponding stochastic mass action system.
\been
\item $(\G,K_D)$ is reaction balanced if and only if $(\G,K_S)$ is reaction balanced. 
\item $(\G,K_D)$ is complex balanced if and only if $(\G,K_S)$ is complex balanced. 
\item $(\G,K_D)$ is cycle balanced if and only if $(\G,K_S)$ is cycle balanced.
\enen
\end{theorem}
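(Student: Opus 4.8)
The plan is to treat the three biconditionals in decreasing order of ease, using part (ii) (complex balance) as the anchor and the product-form expression \eqref{eq:product-form} as the device that translates stochastic factorial factors into deterministic monomial factors. Part (ii) is essentially immediate: by Remark \ref{rem:balanced_system_mass_action} and Theorem \ref{thm:bal}, $(\G,K_D)$ is complex balanced if and only if it has a positive complex balanced state; by Theorem \ref{thm:complex-balance} the latter is equivalent to $(\G,K_S)$ admitting a complex balanced distribution within an active irreducible component; and by Theorem \ref{thm:one_to_many_cb_dist} together with Definition \ref{def:stoch_balanced_system} this is equivalent to $(\G,K_S)$ being complex balanced. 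Crucially, this chain also matches the \emph{existence} clauses of the two definitions, since a positive complex balanced equilibrium forces, via Theorem \ref{thm:anderson}, a stationary distribution on every irreducible component.

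For part (i), I would first note that reaction balance implies complex balance in both settings (Theorem \ref{thm:det_rel}(i) and Theorem \ref{thm:stoch_rel}(i)). Hence whenever either system is reaction balanced, part (ii) applies and every stationary distribution of $(\G,K_S)$ has the form $\pi_\Gamma(x)=M_\Gamma c^x/x!$ for a positive complex balanced equilibrium $c$ of $(\G,K_D)$. Substituting this product form and the mass action rates into the reaction balance identity \eqref{eq:rxnbal} and cancelling the common factor $(x-y)!^{-1}\one_{\{x\ge y\}}$, the stochastic condition collapses to the purely algebraic relation $\kappa_{y\to y'}c^y=\kappa_{y'\to y}c^{y'}$, which is exactly deterministic reaction balance of $c$. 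This gives both directions: from $(\G,K_D)$ reaction balanced I read off $\kappa_{y\to y'}c^y=\kappa_{y'\to y}c^{y'}$ and verify \eqref{eq:rxnbal} for each $\pi_\Gamma$; conversely, from $(\G,K_S)$ reaction balanced I evaluate \eqref{eq:rxnbal} at a state $x\in\Gamma$ where $y\to y'$ is active (such $x$ exists because $\Gamma$ is active) and $\pi_\Gamma>0$, recovering the same algebraic relation for every reaction, so that $c$ is a positive reaction balanced equilibrium and Theorem \ref{thm:bal} upgrades this to $(\G,K_D)$ being reaction balanced. This is the computation already carried out in the proof of Theorem \ref{thm:stoch_bal}.

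For part (iii), the translation is even cleaner and, notably, does not pass through complex balance. Writing out \eqref{eq:cyc_bal_m} for mass action at a state $x\ge 0$, both the factors $\pi(x+y_i)$ versus $\pi(x+y_{i+1})$ and the factorial factors $(x+y_i)!$ versus $(x+y_{i+1})!$ are indexed by the same cyclic set $\{y_1,\dots,y_j\}$, so their products over $i$ coincide and cancel. What survives is precisely the rate-constant cycle condition $\prod_{i=1}^j\kappa_{y_i\to y_{i+1}}=\prod_{i=1}^j\kappa_{y_{i+1}\to y_i}$, which by Remark \ref{rem:cyc_bal} is also exactly the condition characterizing deterministic cycle balance (and which, as in Proposition \ref{prop:nonexplosive}, makes \emph{every} measure cycle balanced). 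Thus the ``universality'' half of Definitions \ref{def:balanced_system} and \ref{def:stoch_balanced_system} reduces to one and the same condition on the rate constants in the two settings.

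The main obstacle is reconciling the existence clauses, and here parts (i)--(ii) behave very differently from part (iii). In (i) and (ii) the hypothesis forces complex balance, hence non-explosivity and a Poisson stationary distribution on every irreducible component, so ``$(\G,K_D)$ has a positive equilibrium'' and ``$(\G,K_S)$ has a stationary distribution on an active irreducible component'' hold simultaneously. Cycle balance grants no such luxury: it does not imply complex balance, weak reversibility, or non-explosivity, so a deterministic positive equilibrium need not be accompanied by a stochastic stationary distribution on an active component. I therefore expect the delicate step of part (iii) to be showing that, under the common cycle condition, the deterministic and stochastic existence clauses still coincide; establishing (or suitably qualifying) this equivalence is the crux of the argument.
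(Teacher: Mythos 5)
Your proposal follows the paper's own proof essentially step for step. Part (ii) is handled exactly as in the paper (via Theorem \ref{thm:complex-balance}); part (i) is the same argument the paper gives -- pass through complex balance to obtain the product form \eqref{eq:product-form}, substitute into \eqref{eq:rxnbal}, cancel the factorial factors, and read off $\kappa_{y\to y'}c^y=\kappa_{y'\to y}c^{y'}$ in both directions, with activity of the irreducible component supplying a state $x\ge y$ for the converse; and part (iii) performs the same cyclic cancellation of the $\pi(x+y_i)$ and $(x+y_i)!$ factors that the paper uses to reduce both notions of cycle balance to the rate-constant identity $\prod_i\kappa_{y_i\to y_{i+1}}=\prod_i\kappa_{y_{i+1}\to y_i}$.

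The one place you stop short -- reconciling the existence clauses of Definitions \ref{def:balanced_system} and \ref{def:stoch_balanced_system} in part (iii) -- is not a defect of your argument relative to the paper: the paper's proof is silent on exactly this point and simply asserts that each system is cycle balanced if and only if the rate constants satisfy the cycle identity, tacitly dropping the requirement that an active equilibrium (resp.\ a stationary distribution within an active irreducible component) exist. Your caution is in fact warranted. The system \eqref{eq:ACR} has active equilibria $(1,s)$, $s>0$, and no directed cycles through three or more complexes, so $(\G,K_D)$ is (vacuously) cycle balanced; yet, as the paper itself notes in Remark \ref{ex:srvb_drvb}, the corresponding $(\G,K_S)$ has no active irreducible components and hence fails the existence clause of Definition \ref{def:stoch_balanced_system}. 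So the equivalence in part (iii) holds for the balancing conditions themselves but not, as literally stated, for the definitions including their existence clauses; the "crux" you identify is a genuine gap that the paper's proof does not close, and the statement would need to be read (or qualified) as asserting the equivalence of the rate-constant condition conditional on the respective existence hypotheses.
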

\begin{proof}
\been
\item Suppose that $(\G,K_D)$ is reaction balanced. Let $c$ be a positive reaction balanced equilibrium of $(\G,K_D)$. In other words, for every $y \to y' \in \R$, the following holds:
\begin{equation*}
\kappa_{y \to y'}c^y = \kappa_{y' \to y} c^{y'}
\end{equation*}
Then $c$ is a complex balanced equilibrium of $(\G,K_D)$ and by Theorem \ref{thm:anderson}, $\pi(x) = c^x/ x!$ for all $x \in \ZZ^n_{\ge 0}$ defines a complex balanced measure of $(\G,K_S)$. It is straightforward to verify by a direct calculation that $\pi$ is a reaction balanced stationary measure of $(\G,K_S)$. On the other hand, if $(\G,K_S)$ is reaction balanced, by Theorems \ref{thm:stoch_rel} and \ref{thm:complex-balance} there is a $c>0$ such that $\pi(x) = c^x/x!$ for all $x \in \ZZ^n_{\ge 0}$ defines a reaction balanced stationary measure. But this implies that $c$ is a reaction balanced equilibrium of $(\G,K_D)$ which proves that $(\G, K_D)$ is reaction balanced (by Theorem \ref{thm:bal}). 

\item This follows from Theorem \ref{thm:complex-balance}. 

\item In the case of mass action kinetics, both the deterministic cycle balance condition \eqref{def:cyc_bal} for a positive vector $c$ and the stochastic cycle balance condition \eqref{eq:cyc_bal_m} for a distribution $\pi$ on an active irreducible component reduce to the same condition on reaction rate constants. Both $(\G,K_D)$ and $(\G,K_S)$ are cycle balanced if and only if for every sequence of distinct complexes $(y_1, \ldots , y_j) \subseteq \C$ with $j\geq3$, the  reaction rate constants satisfy 
\begin{equation*}
\prod_{i=1}^j \kappa_{y_i \to y_{i+1}} = \prod_{i=1}^j \kappa_{y_{i+1} \to y_{i}}
\end{equation*}
where by assumption $\kappa_{y \to y'} = 0$ if $y \to y' \notin \R$, and $y_{j+1}:=y_1$. To see this in the stochastic setting, note that if an irreducible component $\Gamma$ is active, for $y_1\to y_2\in\R$ there exists $x\in\Gamma$ with $x\geq y_1$ and for all $1 \le i \le j$, if $x\geq y_i$, then $x-y_i+y_{i+1}\geq y_{i+1}$. It follows that $(\G,K_D)$ is cycle balanced if and only if $(\G,K_S)$ is cycle balanced. 
\enen
\end{proof}
\begin{remark}
 Theorem \ref{thm:bridge} implies that we can talk about complex balanced, reaction balanced and cycle balanced mass action systems regardless whether we are considering the stochastic or deterministic modeling regime.
\end{remark}

\begin{remark} \label{ex:srvb_drvb}
In general, the corresponding statement in Theorem \ref{thm:bridge} for reaction vector balance is not true in either direction. We give examples to illustrate this point. 
\been
\item\label{item:example} (Deterministic reaction vector balance $\centernot \implies$ Stochastic reaction vector balance.) Denote the deterministic mass action system in \eqref{eq:square} by $(\G,K_D)$. We saw earlier that $(\G,K_D)$ is reaction vector balanced and complex balanced but not reaction balanced. By Theorem \ref{thm:bridge}, it follows that the corresponding stochastic mass action $(\G,K_S)$ is complex balanced. Now suppose that $(\G,K_S)$ is reaction vector balanced, so that by Corollary \ref{cor:cb_plus_rvb_rb_2}, $(\G,K_S)$ is reaction balanced. This implies that, by Theorem \ref{thm:bridge}, $(\G,K_D)$ is reaction balanced, which is a contradiction. Therefore, $(\G,K_S)$ cannot be reaction vector balanced. 

Another example is given by the deterministic mass action system \eqref{eq:ACR}, which is reaction vector balanced. However, the corresponding stochastic mass action system has no active irreducible components and therefore no stationary distributions within an active irreducible component. Hence, it cannot be reaction vector balanced.
\item \label{ex:srvb_drvb_pt} (Stochastic reaction vector balance $\centernot \implies$ Deterministic reaction vector balance.) Consider the stochastic mass action system $(\G,K_S)$ depicted below:

\begin{equation}\label{eq:stochrvb}
  \begin{tikzpicture}[baseline={(current bounding box.center)}]
   \node[state] (1)  at (0,0)  {$0$};
   \node[state] (2)  at (3,0)  {$A$};
   \node[state] (3)  at (6,0)  {$3A$};
    \node[state] (4)  at (3,3)  {$2A$};
     \node[state] (5)  at (6,3)  {$4A$};
   \path[->]
    (1) edge[red,bend left] node {$1$} (2)
    	edge[bend left] node {$2$} (4)
    (2) edge[red,bend left] node {$1$} (1)
         edge[bend left] node {$4$} (3)
         edge[red,bend left] node {$1$} (4)
    (3) edge[bend left] node {$12$} (2)
    (4) edge[red,bend left] node {$2$} (2)
          edge[bend left] node {$3$} (1)
          edge[bend left] node {$1$} (5)
    (5) edge[bend left] node {$4$} (4);
  \end{tikzpicture}
 \end{equation}
 Similarly to Remark \ref{rem:no_one_to_many} (2), the reactions corresponding to the edges in red have reaction vector $\pm1$ while the reactions corresponding to the edges in black have reaction vector $\pm2$. 
 The only active irreducible component is $\ZZ_{\ge 0}$, and the only stationary distribution within $\ZZ_{\ge 0}$ is
 \begin{equation*}
 \pi(x) = M\prod_{j=0}^{x-1}\frac{1}{2j+1}\quad\text{for }x\in\ZZ_{\geq0},
 \end{equation*}
where $M$ is a normalizing constant. It can be checked that $\pi$ is reaction vector balanced, which implies that $(\G,K_S)$ is reaction vector balanced. The reaction vector balanced equilibria of the corresponding deterministic mass action system $(\G,K_D)$ are solutions of 
\begin{align*}
&0 = 1 - 2a^2, \\
&0 =  4 + 2a + 2a^2 - 24a^3 - 8a^4.   
\end{align*}
The only positive solution of the first equation is $a = 1/ \sqrt{2}$, which is not a solution of the second equation. Thus $(\G,K_D)$ does not have any reaction vector balanced equilibria. 
\enen
\end{remark}

As an immediate consequence of Corollary \ref{cor:cb_plus_rvb_rb_2} and Theorem \ref{thm:bridge}, the following can be stated, in the spirit of Theorem \ref{thm:badal} (due to \cite{joshi:detailed}):

\begin{corollary}\label{cor:cb_plus_detrb}
  If the stochastic mass action system $(\G,K_S)$ is reaction vector balanced (i.e.\ detailed balanced as a Markov chain) and complex balanced, then the corresponding deterministic mass action system $(\G,K_D)$ is reaction balanced (i.e.\ detailed balanced as a reaction system).
\end{corollary}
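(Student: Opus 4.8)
The plan is to chain together the two results this corollary is advertised to follow from, namely Corollary \ref{cor:cb_plus_rvb_rb_2} and Theorem \ref{thm:bridge}. The hypotheses hand us precisely that $(\G,K_S)$ is a complex balanced stochastic mass action system which is additionally reaction vector balanced. This is exactly the hypothesis of Corollary \ref{cor:cb_plus_rvb_rb_2}, so applying that corollary immediately yields that $(\G,K_S)$ is reaction balanced. (Corollary \ref{cor:cb_plus_rvb_rb_2} also concludes that $\G$ is reversible, but that stronger structural conclusion is not needed here.)

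Next I would invoke part (1) of Theorem \ref{thm:bridge}, which is the two-way equivalence stating that $(\G,K_D)$ is reaction balanced if and only if $(\G,K_S)$ is reaction balanced. Having just established that $(\G,K_S)$ is reaction balanced, the relevant direction of this equivalence transfers the property across the deterministic/stochastic bridge and gives that the corresponding deterministic mass action system $(\G,K_D)$ is reaction balanced. This is precisely the desired conclusion, and identifying reaction balance of the reaction network with detailed balance in the reaction-network-theory sense (as noted in Definition \ref{def:balanced_states}(a)) supplies the parenthetical reformulation in the statement.

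Because both ingredients are already proved earlier in the paper, there is no genuine obstacle: the argument is a two-step deduction requiring no re-derivation of the underlying mass action identities. The one point worth checking is merely that the notion of ``reaction balanced'' for the stochastic system produced by Corollary \ref{cor:cb_plus_rvb_rb_2} is the same notion appearing in the bridge equivalence of Theorem \ref{thm:bridge}; this is immediate since both refer to the system-level Definition \ref{def:stoch_balanced_system}. Thus the chain Corollary \ref{cor:cb_plus_rvb_rb_2} $\Rightarrow$ reaction balance of $(\G,K_S)$ $\Rightarrow$ (via Theorem \ref{thm:bridge}) reaction balance of $(\G,K_D)$ completes the proof.
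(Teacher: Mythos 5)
Your proposal is correct and matches the paper's own (implicit) argument exactly: the corollary is stated as an immediate consequence of Corollary \ref{cor:cb_plus_rvb_rb_2} followed by part (1) of Theorem \ref{thm:bridge}, which is precisely the two-step chain you give.
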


Note that the converse does not hold: deterministic complex balance and reaction vector balance does not imply stochastic reaction vector balance (hence reaction balance), as highlighted in Remark \ref{ex:srvb_drvb}\eqref{item:example}. 

  Our results complete the framework of \cite{anderson:product-form, joshi:detailed, cappelletti:complex_balanced}. Indeed, given a stochastic mass action system $(\G,K_S)$, we say that a set $A \subseteq \ZZ^n_{\ge 0}$ has property $P$ if no nonzero polynomial of degree at most $\max_{y\to y'\in\R}\|y\|_1$ vanishes on $A$. Then, we have the following scheme of implications:
  
 \begin{center}
   \begin{tikzpicture}[auto, every node/.style={scale=0.8}]
        \tikzstyle{block} = [draw, rectangle, text width=5.7cm];
        \node [block] (det_det_bal) {$(G,K_S)$ is reaction balanced};
        \node [block, below=0.5cm of det_det_bal] (whittle_det_bal_all) {Existence of a reaction balanced measure $\pi$ with $\supp(\pi)=\ZZ^n_{\ge 0}$};
        \node [block, below=0.5cm of whittle_det_bal_all] (whittle_det_bal_one) {Existence of reaction balanced measure within one active irreducible component};
        \node [block, below=0.5cm of whittle_det_bal_one] (stoch_det_bal) {$\pi(x)=c^x/x!$ for $x\in\ZZ^n_{\ge 0}$ is reaction vector balanced};
        \node [block, below=0.5cm of stoch_det_bal] (stoch_det_bal_A) {$\pi(x)=c^x/x!$ if $\pi(x) \ne 0$ for $x\in\ZZ^n_{\ge 0}$, $\pi$ is reaction vector balanced, and $\supp(\pi)$ has property $P$};
        \node [block, right=0.3cm of det_det_bal] (det_compl_bal) {$(G,K_S)$ is complex balanced};
        \node [block, right=0.3cm of whittle_det_bal_all] (stoch_compl_bal_all) {Existence of a complex balanced measure $\pi$ with $\supp(\pi)=\ZZ^n_{\ge 0}$};
        \node [block, right=0.3cm of whittle_det_bal_one] (stoch_compl_bal_one) {Existence of a complex balanced measure within one active irreducible component};
        \node [block, right=0.3cm of stoch_det_bal] (poisson) {$\pi(x)=c^x/x!$ for $x\in\ZZ^n_{\ge 0}$ is a stationary measure};
        \node [block, right=0.3cm of stoch_det_bal_A] (poisson_A) {$\pi(x)=c^x/x!$ if $\pi(x) \ne 0$ for $x\in\ZZ^n_{\ge 0}$, $\pi$ is a stationary measure, and $\supp(\pi)$ has property $P$};
        \node [block, left=0.3cm of det_det_bal] (rvb) {$(G,K_S)$ is reaction vector balanced};
        \node [block, left=0.3cm of whittle_det_bal_all] (rvb_all) {Existence of a reaction vector balanced measure $\pi$ with $\supp(\pi)=\ZZ^n_{\ge 0}$};
        \node [block, left=0.3cm of whittle_det_bal_one] (rvb_one) {Existence of a reaction vector balanced measure within one active irreducible component};
     \draw[implies-implies,double equal sign distance] (det_det_bal)--(whittle_det_bal_all);
     \draw[implies-implies,double equal sign distance] (whittle_det_bal_all)--(whittle_det_bal_one);
     \draw[implies-implies,double equal sign distance] (whittle_det_bal_one)--(stoch_det_bal);
     \draw[implies-implies,double equal sign distance] (stoch_det_bal)--(stoch_det_bal_A);
     \draw[implies-implies,double equal sign distance] (det_compl_bal)--(stoch_compl_bal_all);
     \draw[implies-implies,double equal sign distance] (stoch_compl_bal_all)--(stoch_compl_bal_one);
     \draw[implies-implies,double equal sign distance] (stoch_compl_bal_one)--(poisson);
     \draw[implies-implies,double equal sign distance] (poisson)--(poisson_A);
     \draw[-implies,double equal sign distance] (rvb)--(rvb_all);
     \draw[-implies,double equal sign distance] (rvb_all)--(rvb_one);
     \draw[-implies,double equal sign distance] (det_det_bal)--(det_compl_bal);
     \draw[-implies,double equal sign distance] (whittle_det_bal_one)--(stoch_compl_bal_one);
     \draw[-implies,double equal sign distance] (whittle_det_bal_all)--(stoch_compl_bal_all);
     \draw[-implies,double equal sign distance] (stoch_det_bal)--(poisson);
     \draw[-implies,double equal sign distance] (stoch_det_bal_A)--(poisson_A);
     \draw[-implies,double equal sign distance] (det_det_bal)--(rvb);
     \draw[-implies,double equal sign distance] (whittle_det_bal_one)--(rvb_one);
     \draw[-implies,double equal sign distance] (whittle_det_bal_all)--(rvb_all);
     
   \end{tikzpicture} 
 \end{center}


Further implications connecting the deterministic and stochastic models are summarized in Figure \ref{summary_of_implications}.

\subsection{Proof of Theorem \ref{thm:rvb+cb=rb}}\label{sec:proof_prop}
 We have $A\subseteq\supp(\pi)$ with $A$ being a non-empty subset of $\ZZ_{\geq 0}^n$. Moreover, since complex balanced measure are stationary measures by Theorem \ref{thm:stoch_imp}, by standard Markov chain theory and by the fact that under mass action kinetics the irreducible components are either contained in $\ZZ^n_{\geq0}$ or disjoint from it, we can restrict $\pi$ to a complex balanced measure within $\ZZ_{\geq 0}^n$.
  
 Since $\pi$ is a $\sigma$-finite complex balanced measure of $(\G,K_S)$, by Proposition \ref{thm:exist_stat_dist}, $\pi(\ZZ_{\geq 0}^n) < \infty$. Fix $y\to y'\in\R$. Since $(\G,K_S)$ is a mass action system, we have $\lambda_{y \to y'}(x) = \kappa_{y\to y'}\prod_{i=1}^n x_i(x_i-1)\cdots(x_i-y_i+1)$ for some $\kappa_{y\to y'} > 0$ and for all $x\in\ZZ^n_{\geq0}$ (see Remark \ref{rem:mass_action_no_indicator}). By assumption, $\lambda_{y \to y'}$ cannot vanish on $\supp(\pi)$, so there exists an irreducible component in $\supp(\pi)$ such that $y\to y'$ is active at some $x\in\Gamma$. Since $\pi$ is complex balanced, by Theorem \ref{thm:stoch_bal} $\G_{\Gamma}$ is weakly reversible. It also follows from \cite[Theorem 18]{cappelletti:complex_balanced} that $(\G_\Gamma, K_D)$ is (deterministically) complex balanced with the reaction rates inherited from  $(\G,K_D)$. Since this holds for every $y\to y'\in\R$, it follows that $\G$ is weakly reversible. Therefore, the subnetworks $\G_{\Gamma}$ are necessarily a union of connected components of $\G$: if $x\geq y$, then $x-y+y'\geq y'$, so if $y\to y'$ is active at some state of $\Gamma$, the same holds for $y'\to y''$, and so on. In conclusion, since \eqref{eq:compbal} only concerns reactions in the same connected component, $(\G, K_D)$ is complex balanced.
 By Theorem \ref{thm:anderson}, for all $x \in \supp(\pi)$, $\pi(x) = M_\Gamma c^x/x!$, where $M_\Gamma$ is a positive constant depending on the irreducible component containing $x$, and $c \in \ZZ^n_{> 0}$. By assumption, \eqref{eq:rxnvecbal} holds for all $x\in A$ and all reaction vectors $\xi$. Substituting the expressions for $\pi$ and $\lambda$ into the reaction vector balance condition \eqref{eq:rxnvecbal} and simplifying yields
\begin{equation*}
\sum_{y\to y'\,:\,y' - y = \xi} \left( \kappa_{y \to y'} - c^\xi \kappa_{y' \to y} \right) \prod_{i=1}^n x_i(x_i-1)\cdots(x-y_i+1) = 0
\end{equation*}
for all $x\in A$ and all reaction vectors $\xi$. It follows that the polynomial on the left-hand side is null. By linear independence of the polynomials $\prod_{i=1}^nx_i(x_i-1)\cdots(x-y_i+1)$ in the sense of Remark \ref{rem:mass_action_no_indicator}, the expressions $\kappa_{y \to y'} - c^{y'-y} \kappa_{y' \to y}$ must be equal to zero for all $y\to y'\in\R$. This implies that $c$ is a reaction balanced state and therefore by Theorem \ref{thm:bridge}, $\pi$ is a reaction balanced measure. \hfill\qed

\subsection*{Acknowledgments}
We thank David Anderson and Gheorghe Craciun, for organizing the Workshop on Mathematics of Reaction Networks at University of Wisconsin-Madison in October 2015. The project was conceived during the workshop.

\end{document}